\newcommand{\Z}{\mathbf{Z}}
\newcommand{\R}{\mathbf{R}}
\newcommand{\C}{\mathbf{C}}
\newcommand{{\ba}}{\bf a}
\newcommand{\ve}{\varepsilon}
\newcommand{\la}{\lambda}
\newcommand{\La}{\Lambda}
\newcommand{\ga}{\gamma}
\newcommand{\pa}{\partial}
\newcommand{\ra}{\rightarrow}
\newcommand{\Om}{\Omega}
\newcommand{\del}{\delta}
\newcommand{\Del}{\Delta}
\newcommand{\na}{\nabla}
\newcommand{\cd}{\cdot}
\newcommand{\al}{\alpha}
\newcommand{\be}{\begin{equation}}
\newcommand{\ee}{\end{equation}}
\newcommand{\om}{\omega}
\newtheorem{lem}{Lemma}{\bf}{\it}
\newtheorem{remark}{Remark}{\it}{\rm}
\newtheorem{theorem}{Theorem}
\newtheorem{proposition}{Proposition}
\newtheorem{corollary}{Corollary}
\numberwithin{theorem}{section}
\numberwithin{lem}{section}
\numberwithin{equation}{section}
\numberwithin{proposition}{section}
\numberwithin{corollary}{section}
\title[Strong Convergence]{Strong Convergence to the Homogenized Limit of Elliptic Equations with Random Coefficients}
\author{Joseph G. Conlon and Thomas  Spencer}
\address{ (Joseph G. Conlon): University of Michigan, Department of Mathematics, Ann Arbor,
  MI 48109-1109}
\email{conlon@umich.edu}
\address{ (Thomas Spencer): School of Mathematics, Institute for Advanced Study, Princeton, NJ 08540}
\email{spencer@math.ias.edu}
\keywords{Euclidean field theory, pde with random coefficients, homogenization}
\subjclass{81T08, 82B20, 35R60, 60J75}
\begin{document}

\maketitle

\begin{abstract}
Consider a discrete uniformly elliptic divergence form equation on the $d$ dimensional lattice $\Z^d$ with random coefficients. It has previously been shown that if the random environment is translational invariant, then the averaged Green's function together with its first and second differences, are bounded by the corresponding quantities for the constant coefficient discrete elliptic equation. It has also been shown that if the random environment is ergodic, then solutions of the random equation converge under diffusive scaling  to solutions of a homogenized elliptic PDE on $\R^d$. In this paper point-wise estimates are obtained on the difference between the averaged Green's function and the homogenized Green's function for certain random environments which are strongly mixing.

\end{abstract}

\section{Introduction.}
Let $(\Om,\mathcal{F},P)$ be a probability space and  denote by $\langle \  \cd \ \rangle$ expectation w.r. to the measure $P$.   We assume that the $d$ dimensional integer lattice $\Z^d$ acts on $\Om$ by translation operators $\tau_x:\Om\ra\Om, \ x\in\Z^d$, which are measure preserving and satisfy the properties $\tau_x\tau_y=\tau_{x+y}, \ \tau_0= \ {\rm identity}, \ x,y\in\Z^d$. 
Consider a bounded measurable function  ${\bf a}:\Om\ra\R^{d(d+1)/2}$  from $\Om$ to the space of symmetric $d\times d$ matrices which satisfies the quadratic form inequality  
\begin{equation} \label{A1}
\la I_d \le {\bf a}(\om) \le \La I_d, \ \ \ \ \ \om\in\Om,
\end{equation}
where $I_d$ is the identity matrix in $d$ dimensions and $\La, \la$
are positive constants. We shall be interested in solutions $u(x,\eta,\om)$ to the discrete elliptic equation
\be \label{B1}
\eta u(x,\eta,\om)+\nabla^*{\bf a}(\tau_x\om)\nabla u(x,\eta,\om)=h(x), \quad x\in \Z^d, \ \om\in\Om.
\ee
In (\ref{B1}) we take $\eta\ge 0$ and $\nabla$ the discrete gradient operator, which has adjoint $\nabla^*$. Thus $\nabla$ is a $d$ dimensional { \it column} operator and $\nabla^*$ a $d$ dimensional {\it row} operator, which act on  functions $\phi:\Z^d\ra\R$ by
\begin{eqnarray} \label{C1}
\na \phi(x) &=& \big( \na_1 \phi(x),... \ \na_d\phi(x) \big), \quad  \na_i \phi(x) = \phi (x + {\bf e}_i) - \phi(x),  \\
\na^* \phi(x) &=& \big( \na^*_1 \phi(x),... \ \na^*_d\phi(x) \big), \quad  \na^*_i \phi(x) = \phi (x - {\bf e}_i) - \phi(x). \nonumber
\end{eqnarray}
In (\ref{C1}) the vector  ${\bf e}_i \in \Z^d$ has 1 as the ith coordinate and 0 for the other coordinates, $1\le i \le  d$. 

It is well known \cite{k,pv,zko} that if the translation operators $\tau_x, \ x\in\Z^d$, are ergodic  on $\Om$ then solutions to the random equation (\ref{B1}) converge to solutions of a constant coefficient equation under suitable scaling. Thus suppose $f:\R^d\ra\R$ is a $C^\infty$ function with compact support and for $\ve$ satisfying $0<\ve\le 1$ set $h(x)=\ve^2f(\ve x), \ x\in\Z^d$, in (\ref{B1}).  Then $u(x/\ve,\ve^2\eta,\om)$ converges with probability $1$ as $\ve\ra 0$ to a function $u(x,\eta), \ x\in\R^d$, which is the solution to the constant coefficient elliptic PDE
\be \label{D1}
\eta u(x,\eta)+\na^* {\bf a}_{\rm hom}\na u(x,\eta)=f(x), \quad x\in\R^d,
\ee
where the $d\times d$ symmetric matrix ${\bf a}_{\rm hom}$ satisfies the quadratic form inequality (\ref{A1}).  This homogenization result can be viewed as a kind of central limit theorem, and our purpose here will be to show that the theorem can be strengthened for certain probability spaces $(\Om,\mathcal{F},P)$.

We consider what the homogenization result says about the expectation of the Green's function for  equation (\ref{B1}). By translation invariance of the measure we have that
\be \label{E1}
\langle \  u(x,\eta,\cdot) \ \rangle=\sum_{y\in\Z^d} G_{{\bf a},\eta}(x-y)h(y), \quad x\in\Z^d,
\ee
where  $G_{{\bf a},\eta}(x)$ is the expected value of the Green's function. Setting $h(x)=\ve^2f(\ve x), \ x\in\Z^d$, then (\ref{E1}) may be written as
\be \label{F1}
\langle \ u(x/\ve,\ve^2\eta,\cdot) \ \rangle=\int_{\ve Z^d} \ve^{2-d}G_{{\bf a},\ve^2\eta}\left(\frac{x-z}{\ve}\right) f(z)  \ dz, \quad x\in \ve \Z^d,
\ee
where integration over $\ve\Z^d$ is defined by
\be \label{G1}
\int_{\ve Z^d} g(z) \ dz \ =  \ \sum_{z\in\ve\Z^d} g(z) \ \ve^d.
\ee
Let $G_{{\bf a}_{\rm hom},\eta}(x), \ x\in\R^d$, be the Greens function for the PDE (\ref{D1}). One easily sees that $G_{{\bf a}_{\rm hom},\eta}(\cdot)$ satisfies the scaling property
\be \label{H1}
\ve^{2-d} G_{{\bf a}_{\rm hom},\ve^2\eta}(x/\ve) \ = \  G_{{\bf a}_{\rm hom},\eta}(x), \quad \ve,\eta>0, \ x\in\R^d-\{0\}.
\ee
From (\ref{F1}), (\ref{H1}) we see that homogenization implies that the function $\ve^{2-d} G_{{\bf a},\ve^2\eta}(x/\ve), \ x\in\ve\Z^d$, converges in an averaged sense to the Greens function $G_{{\bf a}_{\rm hom},\eta}(x), \ x\in\R^d$. A consequence of our results here will be that for certain probability spaces $(\Om,\mathcal{F},P)$ and functions 
${\bf a}:\Om\ra\R^{d(d+1)/2}$  this convergence is {\it point-wise} in $x$. In particular for some $\alpha$ satisfying $0<\alpha\le 1$,  there are positive constants $C,\gamma$ such that
\be \label{I1}
|\ve^{2-d} G_{{\bf a},\ve^2\eta}(x/\ve)-G_{{\bf a}_{\rm hom},\eta}(x)| \ \le \ \frac{C\ve^\alpha}{[|x|+\ve]^{d-2+\alpha}} e^{-\gamma\sqrt{\eta/\La} |x|}, \quad 0<\ve\le1,  \ x\in\ve\Z^d-\{0\}.
\ee

We shall also denote by $G_{{\bf a}_{\rm hom},\eta}(x), \ x\in\Z^d$,  the Greens function for the difference equation (\ref{D1}) on $\Z^d$. Evidently the $\Z^d$ Green's function has the property that $G_{{\bf a}_{\rm hom},\eta}(0)$ is finite, unlike the corresponding $\R^d$ Green's function. It is also clear that the inequality (\ref{I1}) for $\ve<1$ follows from the same inequality for $\ve=1$ i.e.
\be \label{J1}
|G_{{\bf a},\eta}(x)-G_{{\bf a}_{\rm hom},\eta}(x)|  \le  \frac{C}{\La(|x|+1)^{d-2+\alpha}} e^{-\gamma\sqrt{\eta/\La} |x|}, \    \ x\in\Z^d,  
\ee
provided we are able to obtain an inequality (\ref{J1}) which is uniform in $\eta>0$ as $\eta\ra 0$. 
We shall prove such an inequality and also  similar inequalities for the derivatives of the expectation of the Green's function,
\begin{eqnarray} \label{K1}
 |\na G_{{\bf a},\eta}(x)-\na G_{{\bf a}_{\rm hom},\eta}(x)|  &\le& \frac{C}{\La(|x|+1)^{d-1+\alpha}} e^{-\gamma\sqrt{\eta/\La} |x|},   \\  
 |\na\na G_{{\bf a},\eta}(x)-\na\na G_{{\bf a}_{\rm hom},\eta}(x)|  &\le& \frac{C}{\La(|x|+1)^{d+\alpha}} e^{-\gamma\sqrt{\eta/\La} |x|}. \label{M1}
\end{eqnarray}
\begin{theorem} Suppose ${\bf a}(\cdot)$ satisfies (\ref{A1}),  the matrices ${\bf a}(\tau_x\cdot), \ x\in\Z^d$, are independent, and $0<\eta\le\La$. Then for $d\ge 2$ there exists $\alpha>0$ depending only on $d$ and  $\La/\la$, such that (\ref{J1}), (\ref{K1}) and (\ref{M1}) hold for some positive constants $\ga, C$, depending only on $d$ and 
$\La/\la$. 
\end{theorem}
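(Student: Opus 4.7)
The plan is to work in Fourier space. By translation invariance of $P$, the averaged Green's function $G_{\mathbf{a},\eta}(x)$ admits the representation
$$G_{\mathbf{a},\eta}(x) \;=\; \int_{[-\pi,\pi]^d} \frac{e^{i x\cdot\xi}}{\eta + e(\xi)^* \mathbf{A}(\xi,\eta)\, e(\xi)}\;\frac{d\xi}{(2\pi)^d},$$
where $e(\xi)_j=e^{i\xi_j}-1$ is the Fourier symbol of $\nabla$ and $\mathbf{A}(\xi,\eta)$ is a Hermitian effective matrix produced by a twisted--corrector construction on $L^2(\Omega,P)$ with quasi-momentum $\xi$ and mass $\eta$. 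By the standard homogenization identification $\mathbf{A}(0,0)=\mathbf{a}_{\mathrm{hom}}$, and $G_{\mathbf{a}_{\mathrm{hom}},\eta}$ has the same representation with the constant matrix $\mathbf{a}_{\mathrm{hom}}$ in place of $\mathbf{A}$. The ellipticity (\ref{A1}) lifts, via a standard variational argument, to the uniform bound $\lambda|e(\xi)|^2 \le e(\xi)^*\mathbf{A}(\xi,\eta)\,e(\xi) \le \Lambda|e(\xi)|^2$.

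The heart of the argument is then a quantitative Hölder estimate
$$|\mathbf{A}(\xi,\eta)-\mathbf{a}_{\mathrm{hom}}| \;\le\; C\bigl(|\xi|+\sqrt{\eta/\Lambda}\bigr)^\alpha,$$
uniform in $\xi\in[-\pi,\pi]^d$ and $\eta\in(0,\Lambda]$, for some $\alpha\in(0,1]$ depending only on $d$ and $\Lambda/\lambda$. This is where the iid hypothesis is used, and I expect it to be the \emph{main obstacle} of the proof. The strategy is to combine two ingredients: first, a deterministic Meyers-type higher-integrability estimate $\langle|\nabla\phi_j|^{2+\delta}\rangle\le C$ for the mass-$\eta$ corrector $\phi_j$, which follows from ellipticity alone with constants depending only on $d$ and $\Lambda/\lambda$; and second, a spectral-gap (Efron--Stein) inequality on $L^2(\Omega,P)$, available precisely because the matrices $\mathbf{a}(\tau_x\cdot)$ are independent, which yields quantitative variance estimates for $\phi_j$ and for the susceptibility-type quantities through which $\mathbf{A}(\xi,\eta)-\mathbf{a}_{\mathrm{hom}}$ is expressed. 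Interpolating, with the $L^2$ bound supplied by spectral gap and the improved $L^{2+\delta}$ control supplied by Meyers, upgrades the soft convergence $\mathbf{A}(\xi,\eta)\to\mathbf{a}_{\mathrm{hom}}$ of qualitative homogenization into the claimed polynomial rate, with $\alpha$ determined by $\delta$ and $d$.

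Once the Hölder estimate is in hand, the bounds (\ref{J1}), (\ref{K1}), (\ref{M1}) follow by Fourier inversion. Writing
$$G_{\mathbf{a},\eta}(x)-G_{\mathbf{a}_{\mathrm{hom}},\eta}(x) \;=\; \int_{[-\pi,\pi]^d} e^{ix\cdot\xi}\,\frac{e(\xi)^*[\mathbf{a}_{\mathrm{hom}}-\mathbf{A}(\xi,\eta)]\,e(\xi)}{[\eta+e^*\mathbf{A} e]\,[\eta+e^*\mathbf{a}_{\mathrm{hom}} e]}\,\frac{d\xi}{(2\pi)^d},$$
one partitions the integration into dyadic shells $|\xi|\sim 2^{-k}$ and uses the Hölder decay of the numerator together with the lower bound $e(\xi)^*\mathbf{A}(\xi,\eta)\,e(\xi)\ge c|\xi|^2$ in the denominator; integration by parts against $e^{ix\cdot\xi}$ on each shell then produces the claimed $(|x|+1)^{2-d-\alpha}$ rate. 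Inserting the Fourier multipliers $e_j(\xi)$ or $e_j(\xi)e_k(\xi)$ gains one or two powers of $|\xi|$ and delivers (\ref{K1}) and (\ref{M1}). Finally, the exponential factor $e^{-\gamma\sqrt{\eta/\Lambda}\,|x|}$ comes from a Combes--Thomas contour shift $\xi\mapsto\xi+i\gamma\sqrt{\eta/\Lambda}\,\hat{x}$: for $\gamma$ sufficiently small in terms of $d$ and $\Lambda/\lambda$, ellipticity keeps both denominators uniformly bounded below after the shift, and the modulus of $e^{ix\cdot\xi}$ produces the exponential decay.
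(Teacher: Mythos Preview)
Your overall architecture matches the paper's: Fourier representation via an effective Hermitian matrix $q(\xi,\eta)$ (your $\mathbf{A}$), H\"older continuity of $q$ at $(0,0)$, Fourier inversion, and a contour shift for the exponential factor. The mechanism you propose for the H\"older estimate (Efron--Stein plus Meyers) is different from what the paper actually does in the i.i.d.\ case---there the argument goes through a Fock-space identification of $L^2(\Omega)$ and Calder\'on--Zygmund bounds for the operator $T_{\xi,\eta}$ on $L^p$ Fock spaces for $p$ near $2$ (Lemma~4.1, Proposition~4.1)---but your route is a recognized alternative and is not where the real trouble lies.

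There are two genuine gaps. First, H\"older continuity of $\mathbf{A}(\xi,\eta)$ alone is \emph{not} enough to run the integration-by-parts on dyadic shells. Each integration by parts in $\xi$ produces a factor $\partial_\xi^m q(\xi,\eta)$, and these derivatives are \emph{not} uniformly bounded as $\eta\to 0$: they only lie in weak $L^p$ spaces with $p=d/|m|$ (Proposition~2.2). The paper's Theorem~3.1 is precisely the statement that under the i.i.d.\ hypothesis this improves to $p=d/(|m|-\alpha)$, and establishing these weak-$L^p$ derivative bounds---via multilinear operators $T_{m_1,\dots,m_k,\eta}$ and an interpolation argument (Lemmas~4.3, 4.4)---is the bulk of Section~4. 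Your proposal contains no mechanism for controlling $\partial_\xi \mathbf{A}$, and without it the dyadic Fourier argument for $d\ge 2$ does not close.

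Second, the claim that inserting $e_j(\xi)e_k(\xi)$ delivers (\ref{M1}) is incorrect. The paper's Fourier-space argument (Theorem~3.2) yields (\ref{J1}), (\ref{K1}), and a H\"older statement (\ref{H3}) for $\nabla G_{\mathbf{a},\eta}-\nabla G_{\mathbf{a}_{\mathrm{hom}},\eta}$ with exponent $1-\delta$, but subject to the constraint $\alpha<\delta$; sending $\delta\to 0$ kills $\alpha$ and one never reaches (\ref{M1}). The paper obtains (\ref{M1}) in Section~7 by a separate argument: a mollification at scale $L=|x|^{1-\delta}$ splits off a low-frequency piece handled crudely by Fourier estimates, while the high-frequency remainder is controlled by the real-space H\"older continuity of $\nabla\nabla G_{\mathbf{a},\eta}$ due to Delmotte--Deuschel, which rests on the Harnack inequality. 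This ingredient is entirely missing from your sketch.
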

We also consider here probability spaces $(\Om,\mathcal{F},P)$ corresponding to certain Euclidean field theories. These Euclidean field theories are determined by a potential $V : \R^d \ra \R$ which is a $C^2$ uniformly convex function.  Thus the second derivative ${\bf a}(\cdot)=V''(\cdot)$ of $V(\cdot)$ is assumed to satisfy the inequality (\ref{A1}). Next consider functions 
$\phi : \Z^d \ra \R$ on the integer lattice in $\R^d$.  Let $\Om$ be the space of
all such functions and $\mathcal{F}$ be the Borel algebra generated by finite 
dimensional rectangles 
$\{ \phi \in \Om: \  |\phi(x_i) - a_i| < r_i, \ i=1,...,N\}$, 
$x_i \in \Z^d, \ a_i \in \R, \ r_i > 0, \ i=1,...,N, \ N \ge 1$.  The translation operators $\tau_x:\Om\ra\Om, \ x\in\Z^d$, are defined by $\tau_x\phi(z)=\phi(x+z), \ z\in\Z^d$. For any $d\ge 1$ and $m>0$ one can define \cite{c1,fs} a unique ergodic translation invariant probability 
measure $P$ on $(\Om, \mathcal{F})$ which depends on the function $V$ and $m$.  The 
measure is formally given as
\begin{equation} \label{L1}
\exp \left[ - \sum_{x\in \Z^d} V\left( \na\phi(x)\right)+m^2\phi(x)^2 \right] \prod_{x\in \Z^d} d\phi(x)/{\rm normalization}.
\end{equation}
\begin{theorem} Let $\tilde{{\bf a}}:\R\ra\R^{d(d+1)/2}$  be a $C^1$ function on $\R$ with values in the space of symmetric $d\times d$ matrices which satisfy the quadratic form inequality  (\ref{A1}). Let $(\Om, \mathcal{F}, P)$ be the probability space of  fields $\phi(\cdot)$ determined by (\ref{L1}), and set ${\bf a}(\cdot)$ in (\ref{B1}) to be ${\bf a}(\phi)=\tilde{{\bf a}}(\phi(0)), \ \phi\in\Om$.  
Suppose in addition that the derivative $D\tilde{{\bf a}}(\cdot)$ of $\tilde{{\bf a}}(\cdot)$ satisfies  the inequality $\|D\tilde{{\bf a}}(\cdot)\|_{\infty}\le \La_1$.  Then for $d\ge 2$ there exists $\alpha>0$ depending only on $d$ and  $\La/\la$, such that (\ref{J1}), (\ref{K1}) and (\ref{M1}) hold for some positive constants $\ga$ and $C=C_1[\La_1/m\La+1]$, where $\ga,C_1$ depend only on $d$ and $\La/\la$. 
\end{theorem}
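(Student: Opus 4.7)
My plan is to reduce Theorem 2 to the proof of Theorem 1 by replacing the independence hypothesis with quantitative mixing estimates coming from the Helffer--Sj\"ostrand (random walk) representation of the field measure (\ref{L1}). Because $V$ is $C^2$ with $\la I_d\le V''\le\La I_d$ and the mass term $m^2\phi^2$ provides uniform confinement, covariances of local functions of $\phi$ decay exponentially at a rate proportional to $m\sqrt{\la/\La}$, and this exponential decay is to play the role previously played by independence.

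The starting point is the Helffer--Sj\"ostrand identity
\be
\av{FG}-\av{F}\av{G} \ = \ \sum_{x,y\in\Z^d}\AV{ \frac{\pa F}{\pa\phi(x)}\;\big[\mathcal{H}^{-1}\big]_{x,y}\;\frac{\pa G}{\pa\phi(y)} },
\ee
where $\mathcal{H}=-\na^*V''(\na\phi)\na + m^2$ acts on functions over $\Z^d$. Since $\mathcal{H}\ge m^2$ and is uniformly elliptic with ratio $\La/\la$, an Aronson-type estimate bounds $[\mathcal{H}^{-1}]_{x,y}$ by $(m\La)^{-1}$ times an exponential in $|x-y|$ with rate proportional to $m\sqrt{\la/\La}$. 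For the specific coefficient ${\bf a}(\phi)=\tilde{{\bf a}}(\phi(0))$ the functional derivative $\pa{\bf a}(\tau_x\phi)/\pa\phi(y)$ equals $D\tilde{{\bf a}}(\phi(x))\delta_{x,y}$ and is bounded in norm by $\La_1$, so the covariance of ${\bf a}(\tau_x\cdot)$ and ${\bf a}(\tau_{x'}\cdot)$ is controlled by $(\La_1^2/m\La)\exp[-cm\sqrt{\la/\La}\,|x-x'|]$. This quantitative bound will stand in for independence and is the source of the $[\La_1/(m\La)+1]$ prefactor in the conclusion.

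The next step is to revisit the proof of Theorem 1 and locate each appearance of the independence hypothesis --- typically in martingale or block-decoupling estimates for corrector-type quantities. Each such step would be replaced by a decoupling bound based on the covariance estimate above, supplemented by a Brascamp--Lieb inequality for the measure (\ref{L1}) to obtain Gaussian concentration of linear statistics of $\phi$ uniformly in $V$. Because the environment is finite-range dependent on the scale $\sqrt{\La}/m$ up to exponentially small corrections, summation over lengthscales reproduces the same multi-scale bounds as in the i.i.d.\ case, with constants degraded by a single factor of $[\La_1/(m\La)+1]$. The deterministic ingredients of Theorem 1 --- the regularity and decay of $G_{{\bf a}_{\rm hom},\eta}$, the discrete Calder\'on--Zygmund estimates, and the perturbative comparison with the homogenized equation --- transfer without modification.

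The main obstacle is to ensure that the H\"older exponent $\alpha$ emerging from the multi-scale iteration remains independent of both $m$ and $\La_1$ and depends only on $d$ and $\La/\la$. This forces one to use the Helffer--Sj\"ostrand estimate only to decouple widely separated scales, so that $\alpha$ is produced by the same iterative mechanism as in Theorem 1 while the loss of quantitative mixing is entirely absorbed into the overall prefactor. Verifying this, and in particular confirming that no step in the iteration accumulates more than a single copy of $[\La_1/(m\La)+1]$, is the delicate bookkeeping point; once it is settled, (\ref{J1}), (\ref{K1}) and (\ref{M1}) follow with the announced constant $C=C_1[\La_1/(m\La)+1]$.
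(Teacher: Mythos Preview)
Your proposal has a genuine gap. You plan to ``reduce Theorem 2 to the proof of Theorem 1 by replacing the independence hypothesis with quantitative mixing estimates,'' but the proof of Theorem 1.1 does not use independence through martingale or block-decoupling arguments that could be swapped for exponential correlation decay. The key step there is Lemma 4.1: the operator $T_{\xi,\eta}$ is bounded with norm $\le 1+\del(p)$ on the Fock space $\mathcal{H}_\mathcal{F}^p(\Z^d)$, and this relies on the explicit orthogonal (Walsh) structure of i.i.d.\ Bernoulli variables, to which Calder\'on--Zygmund theory can be applied directly. There is no analogous Fock structure for the Gibbs measure (\ref{L1}), and a Helffer--Sj\"ostrand covariance bound by itself does not manufacture an $L^p$ space on $\Om$ on which $\|T_{\xi,\eta}\|_p$ is close to $1$. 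Without that, the crucial inequality (\ref{I4}) (and hence (\ref{J4}), Lemma 4.4, and the improvement of Proposition 2.2) has no substitute, and your iteration never starts.

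The paper's actual argument is different in kind. Brascamp--Lieb is used not merely for concentration but to bound the variance of $\sum_x g(x)\pa_\xi\Phi(\xi,\eta,\tau_x\cdot)v$ by $m^{-2}\sum_z\|\pa/\pa\phi(z)\,[\cdots]\|^2$. The point is that the field derivative $G(y,\phi)=d\Phi(-y,\tau_y\phi)v$ itself satisfies a \emph{new} uniformly elliptic equation on $\Z^d$ (obtained by differentiating the corrector equation with respect to $\phi(z)$), namely
\[
\eta\,G(y,\phi)+\na_\xi^*\tilde{{\bf a}}(\phi(y))\na_\xi G(y,\phi)
= -\na_\xi^*\big[\del(-y)\,D\tilde{{\bf a}}(\phi(y))\{v+\pa_\xi\Phi(\xi,\eta,\tau_y\phi)v\}\big].
\]
Meyer's theorem applied to \emph{this} equation on $\Z^d$ gives $\|\na_\xi G\|_q\le C\La_1|v|/\La$ for some $q<2$ depending only on $d,\La/\la$; Young's inequality then converts this into the required $L^p\to L^\infty$ bound with $p>1$ (the analogue of (\ref{I4})), and the rest of the Fourier-space machinery of \S3--4 goes through unchanged. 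Your Helffer--Sj\"ostrand step captures the reduction to field derivatives, but you then discard them in favor of a scalar covariance bound; the missing idea is that those derivatives solve an elliptic PDE on the lattice, and that is where the H\"older exponent $\alpha=\alpha(d,\La/\la)$ actually comes from.
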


The limit as $m\ra 0$ of the measure (\ref{L1}) is a probability measure on gradient fields $\om:\Z^d\ra\R^d$, where formally $\om(x)=\na(\phi(x)), \ x\in\Z^d$. This massless field theory measure is ergodic with respect to translation operators \cite{c1,fs} for all $d\ge 1$. In the case $d=1$ it has a simple structure since  then the variables $\om(x), \ x\in\Z$, are i.i.d.  Note that in the probability space $(\Om,\mathcal{F},P)$ for the massless field theory,  the Borel algebra $\mathcal{F}$ is  generated by the intersection of finite dimensional rectangles and the hyperplanes imposing the gradient constraints for $\om(\cdot)$.  For $d\ge 3$ the gradient field theory measure induces a measure on fields $\phi:\Z^d\ra\R$ which is simply the limit of the measures (\ref{L1})  as $m\ra 0$.   For $d=1,2$ the $m\ra 0$ limit of the measures (\ref{L1}) on fields  $\phi:\Z^d\ra\R$ does not exist.  

We can show that the inequalities (\ref{J1}), (\ref{K1}), (\ref{M1}) also hold when $(\Om,\mathcal{F},P)$ is given by the massless field theory environment.
\begin{theorem} Let $\tilde{{\bf a}}:\R^d\ra\R^{d(d+1)/2}$  be a $C^1$ function on $\R^d$ with values in the space of symmetric $d\times d$ matrices which satisfy the quadratic form inequality  (\ref{A1}). Let $(\Om, \mathcal{F}, P)$ be the probability space of  gradient fields $\om(\cdot)=\na\phi(\cdot)$ determined by the limit of (\ref{L1}) as $m\ra 0$, and set ${\bf a}(\cdot)$ in (\ref{B1}) to be ${\bf a}(\om)=\tilde{{\bf a}}(\om(0)), \ \om\in\Om$.  
Suppose in addition that the derivative $D\tilde{{\bf a}}(\cdot)$ of $\tilde{{\bf a}}(\cdot)$ satisfies  the inequality $\|D\tilde{{\bf a}}(\cdot)\|_{\infty}\le \La_1$.  Then for $d\ge 2$ there exists $\alpha>0$ depending only on $d$ and  $\La/\la$, such that (\ref{J1}), (\ref{K1}) and (\ref{M1}) hold for some positive constants $\ga$ and $C=C_1[\La_1/\La\sqrt{\la}+1]$, where $\ga,C_1$ depend only on $d$ and $\La/\la$. 
\end{theorem}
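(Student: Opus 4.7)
My approach is to run the argument of Theorem 2 verbatim, but with the mass-dependent covariance estimate replaced by one that exploits the finite variance of $\omega=\nabla\phi$ in the massless theory. The conceptual point is that, since $\tilde{\bf a}$ is a function of $\omega(0)$ alone (not of $\phi(0)$), the only place where the mass $m$ entered the proof of Theorem 2—namely as a regularization in the Helffer--Sjöstrand/Witten covariance identity—can be dropped at the cost of replacing $1/m$ by $\lambda^{-1/2}$, and everything else in that proof is mass-independent.

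The key technical input is the log-concave covariance identity for gradient Gibbs measures: for smooth functionals $F,G$ of $\phi$,
\begin{equation*}
\AV{F;G} \ = \ \AV{\,\bigl(\na_\phi F,\ K\,\na_\phi G\bigr)\,},
\end{equation*}
where $K=K(x,y;\phi)$ is the Green's function of the Witten Laplacian $-\na^*V''(\na\phi)\na+m^2$. I would apply this with $F={\bf a}(\tau_z\cdot)-\av{{\bf a}(\tau_z\cdot)}$ (or a similar centered quantity appearing in the Duhamel-type expansion used for Theorem 2) and $G$ a second derivative of $G_{{\bf a},\eta}$ in a distant variable. Because ${\bf a}$ depends on $\phi$ only through $\na\phi$, the chain rule turns $\na_\phi F$ into a finite difference of $D\tilde{\bf a}$, which is pointwise bounded by $\La_1$ and supported near $z$; what actually enters the covariance bound is therefore $\na_x\na_y K(x,y;\phi)$. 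Uniform ellipticity (\ref{A1}) of $V''$ gives the $\ell^2$-operator bound $\|\na\na K\|\le 1/\la$ independently of $m$, and a Cauchy--Schwarz step combined with the deterministic $\La^{-1}$ bound on the random Green's function differences produces precisely the prefactor $\La_1/\La\sqrt\la$ in the statement.

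Once this $m$-independent covariance estimate is in hand, the remainder of the proof of Theorem 2 goes through with no change: one iterates the Duhamel identity relating $G_{{\bf a},\eta}$ to $G_{{\bf a}_{\rm hom},\eta}$, inserts the covariance bound, and closes an integral inequality in $|x|$ yielding the polynomial gain of order $\ve^\alpha$ with $\alpha=\alpha(d,\La/\la)>0$; the Agmon factor $e^{-\ga\sqrt{\eta/\La}\,|x|}$ is inherited from $G_{{\bf a}_{\rm hom},\eta}$ via the standard exponential perturbation argument, which is insensitive to the replacement of $m$ by $\sqrt\la$.

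The main obstacle is establishing the uniformity of the covariance estimate as $m\downarrow 0$. For generic functions of $\phi$ the Witten Green's function $K$ itself is unbounded as $m\to0$ in dimensions $d=1,2$, so the argument would fail in those dimensions; the point is that for gradient functionals only $\na_x\na_y K$ appears, and this object is uniformly bounded in $m$ for all $d\ge 1$ by the uniform ellipticity of $V''$. With this uniformity verified, Theorem 3 may either be obtained directly, with $K$ interpreted as the massless gradient Witten Green's function, or deduced from Theorem 2 by sending $m\downarrow 0$ and using weak convergence of the measures (\ref{L1}) on gradient cylinder events as established in \cite{c1,fs}.
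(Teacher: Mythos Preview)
Your central technical observation is exactly the one the paper uses: because ${\bf a}(\omega)=\tilde{\bf a}(\omega(0))$ depends on $\phi$ only through $\nabla\phi$, the chain-rule identity $d G(z,\nabla\phi)=\nabla^* d_\omega G(z,\omega)$ converts the Brascamp--Lieb/Helffer--Sj\"ostrand bound into one involving $\nabla\nabla K$ rather than $K$, and $\|\nabla\nabla K\|\le 1/\la$ uniformly in $m$. This is precisely the content of (\ref{H6}) and (\ref{J6}), and it produces the factor $\La_1/(\La\sqrt\la)$ replacing $\La_1/(m\La)$ from the massive case. So on the key point you agree with the paper.

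Where your sketch diverges is in the surrounding machinery. You describe a configuration-space scheme: a Duhamel expansion for $G_{{\bf a},\eta}-G_{{\bf a}_{\rm hom},\eta}$, insertion of the covariance bound, closure of an integral inequality in $|x|$, and an Agmon-type argument for the exponential factor. The paper does none of this directly. It works in Fourier space throughout: the covariance/BL bound is used to control the operators $T_{r,\eta}$ of (\ref{G4}) on $L^p$ spaces (Lemma 5.1 and its massless analogue), Meyer's theorem (Calderon--Zygmund) provides the crucial gain from $p=2$ to $p<2$ that produces the exponent $\alpha$, and these combine to give H\"older continuity and weak-$L^p$ bounds on derivatives of $q(\xi,\eta)$ (Theorem 3.1). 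The pointwise bounds (\ref{J1}), (\ref{K1}) are then extracted by Fourier inversion (Theorem 3.2), with the exponential decay coming from analytic continuation of $q$ in $\xi$ (Corollary 2.1), not an Agmon weight. Finally, (\ref{M1}) requires a separate argument (\S7) that combines the low-frequency Fourier estimates with the Delmotte--Deuschel H\"older regularity of $\nabla\nabla G_{{\bf a},\eta}$; your proposal does not address this step.

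Your sketch is closer in spirit to the Naddaf--Spencer/Gloria--Otto configuration-space approach than to this paper's, and could perhaps be completed along those lines, but as written it is too vague to assess: you do not say what the ``integral inequality in $|x|$'' is, nor how Meyer's theorem enters (it is indispensable for obtaining $\alpha>0$), nor how you would handle (\ref{M1}).
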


Our method of proof for Theorems 1.1-1.3 combine methods used to prove regularity of averaged Green's functions for pde with random coefficients with methods for obtaining rates of convergence in homogenization. Regularity of averaged Green's functions was first proved in \cite{cn2}. The results of that paper imply that for any probability space $(\Om,\mathcal{F},P)$  with translation invariant operators $\tau_x:\Om\ra\Om, \ x\in\Z^d$, the inequalities (\ref{J1}), (\ref{K1}) hold for $\al=0$ and (\ref{M1}) for any $\al<0$. The approach of the paper is to obtain good control on the Fourier transform  
 $\hat{G}_{{\bf a},\eta}(\xi), \ \xi\in[-\pi,\pi]^d$, of  $G_{{\bf a},\eta}(x), \ x\in\Z^d$, for $|\xi|$ close to $0$. 
 Using the Fourier inversion formula, one then obtains the inequalities (\ref{J1})-(\ref{M1}). In \cite{dd} the inequality (\ref{M1}) is proven with $\al=0$, and in fact H\"{o}lder continuity of the second difference of
 $G_{{\bf a},\eta}(x), \ x\in\Z^d$, is also established. In contrast to \cite{cn2}, the approach of \cite{dd} is local in configuration space, and uses results from harmonic analysis which are deeper than those used in \cite{cn2}. In particular, the Harnack inequality \cite{gt} for uniformly elliptic equations in divergence form is needed to prove (\ref{M1}) with $\al=0$, whereas the proof of (\ref{M1}) with $\al<0$ in \cite{cn2} follows from interpolation inequalities.
 
 We have already observed that the inequality (\ref{J1}) with $\al>0$  implies (\ref{I1}), which gives a rate of convergence of $\ve^{\al}$ in homogenization. The first results establishing a rate of convergence for homogenization of elliptic PDE in divergence form were obtained in \cite{y}. These results require much stronger assumptions on the translation operators $\tau_x:\Om\ra\Om, \ x\in\Z^d,$ than ergodicity. One needs to assume that the variables ${\bf a}(\tau_x\cdot), \ x\in\Z^d,$ are independent, or at least are very weakly correlated. Rates of convergence in homogenization when $(\Om,\mathcal{F},P)$  is either the massive field theory of Theorem 1.2 or the massless field theory of Theorem 1.3 were obtained in \cite{ns2}. The main tools used to prove these results are the Brascamp-Lieb (BL) inequality \cite{bl} and Meyer's theorem \cite{m}. Meyer's theorem  is a consequence of the continuity in $p$ of the norms of Calderon-Zygmund operators acting on the spaces  $L^p(\Z^d)$ of functions whose $p$th powers are summable. In \cite{cn1} it was shown that Meyer's theorem could also be used to obtain a rate of convergence in the independent variable  case of Theorem 1.1. Recently \cite{go1,go2}  the independent variable case was taken up again, showing that the method of \cite{ns2}, which uses a BL inequality plus Meyer's theorem, could also be directly implemented in this case. Because of the perturbative nature of Meyer's theorem, this method alone does not yield optimal rates of convergence to homogenization.  However by combining  the method  with some deterministic estimates on Green's functions, optimal rates of convergence to homogenization are obtained in \cite{go1,go2} .
 
 In the present paper we follow the methodology of \cite{cn2} to obtain estimates on $\hat{G}_{{\bf a},\eta}(\xi), \ \xi\in[-\pi,\pi]^d$, which imply (\ref{J1}), (\ref{K1}) with $\al=0$. The estimates on  $\hat{G}_{{\bf a},\eta}(\xi)$ are improved by the use of Meyer's theorem for the independent variable environment, and by the BL inequality plus Meyer's theorem in the field theory case. The inequalities (\ref{J1}), (\ref{K1}) for some $\al>0$ follow then upon using the Fourier inversion formula. To prove (\ref{M1}) we also have to use the results of \cite{dd} to estimate the contribution of high Fourier modes. These estimates are  a consequence of the  H\"{o}lder continuity of the second difference of $G_{{\bf a},\eta}(x), \ x\in\Z^d$, already mentioned.

\section{Fourier Space Representation and Estimates}
In this section we summarize relevant results of previous work \cite{cn1,cn2} which were used to prove pointwise bounds on the Green's function $G_{{\bf a},\eta}(x), \ x\in\Z^d$, defined by (\ref{E1}). The starting point for this is the Fourier representation
\be \label{A2}
G_{{\bf a},\eta}(x) \ = \ 
\frac{1}{(2\pi)^d}\int_{[-\pi,\pi]^d} 
\frac{e^{-i\xi.x}}{\eta+e(\xi)^*q(\xi,\eta)e(\xi)} \ d\xi \ ,
\ee
where the $d\times d$ matrix function $q(\xi,\eta), \ \xi\in \R^d, \ \eta>0$, is a complex Hermitian  positive definite function of $(\xi,\eta)$, periodic in $\xi$ with fundamental region $[-\pi,\pi]^d$, which  satisfies the quadratic form inequality
\begin{equation} \label{B2}
\la I_d \le q(\xi,\eta) \le \La I_d, \quad \xi\in \R^d, \ \eta>0.
\end{equation}
The $d$ dimensional column vector $e(\xi)$ in (\ref{A2}) has $j$th entry $e_j(\xi)=e^{-i{\bf e}_j\cdot\xi}-1, \  1\le j\le d$. 

The function $q(\cdot,\cdot)$ is given in terms of the solution of an elliptic equation on $\Om$. For a measurable function  $\psi:\Om\ra\C$ we define the $\xi$ derivative of $\psi(\cdot)$ in the $j$ direction $\pa_{j,\xi}$, and its adjoint  $\pa^*_{j,\xi}$, by
\begin{eqnarray} \label{C2}
\pa_{j,\xi} \psi(\om) \ &=& \  e^{-i{\bf e}_j.\xi}\psi(\tau_{{\bf e}_j}\om)-\psi(\om),  \\
\pa^*_{j,\xi} \psi(\om) \ &=& \   e^{i{\bf e}_j.\xi}\psi(\tau_{-{\bf e}_j}\om)-\psi(\om). \nonumber
\end{eqnarray}
We also define a $d$ dimensional column $\xi$ derivative operator  $\pa_\xi$ by  $\pa_\xi=(\pa_{1,\xi},....,\pa_{d,\xi})$, which has adjoint  $\pa_\xi^*$  given by the row operator $\pa_\xi^*=(\pa^*_{1,\xi},....,\pa^*_{d,\xi})$. Let $\Phi(\xi,\eta,\om)$ be the d dimensional row vector which is the solution to the equation
\be \label{D2}
\eta\Phi(\xi,\eta,\om)+P\pa_\xi^*{\bf a}(\om)\pa_\xi\Phi(\xi,\eta,\om)=-P\pa^*_\xi {\bf a}(\om), \quad \eta>0, \ \xi\in \R^d, \ \om\in\Om,
\ee
where $P$ is the projection orthogonal to the constant function.
Then $q(\xi,\eta)$ is given in terms of the solution to (\ref{D2}) by the formula
\be \label{E2}
q(\xi,\eta) \ = \  \langle \ {\bf a}(\cdot) \ \rangle+ \langle \ {\bf a}(\cdot)\pa_\xi  \Phi(\xi,\eta,\cdot) \ \rangle \ .
\ee

The solution to (\ref{D2}) can be generated by a convergent perturbation expansion. Let $\mathcal{H}(\Om)$ be the Hilbert space of measurable functions $\psi:\Om\ra\C^d$ with norm $\|\psi\|$ given by $\|\psi\|^2=\langle \ |\psi(\cdot)|^2 \ \rangle$. We define an operator $T_{\xi,\eta}$  on  $\mathcal{H}(\Om)$  as follows: For any $g\in \mathcal{H}$, let 
$\psi(\xi,\eta,\om)$ be  the solution to the equation
\be \label{F2}
\frac{\eta}{\La}\psi(\xi,\eta,\om)+\pa_\xi^*\pa_\xi\psi(\xi,\eta,\om)=\pa^*_\xi g(\om), \quad \eta>0, \ \xi\in \R^d, \ \om\in\Om.
\ee
Then $T_{\xi,\eta} g(\cdot)=\pa_\xi\psi(\xi,\eta,\cdot)$, or more explicitly
\be \label{G2}
T_{\xi,\eta} g(\om) \ = \   \sum_{x\in \Z^d} \left\{\nabla\nabla^* G_{\eta/\La}(x)\right\}^*\exp[-ix.\xi]  \ g(\tau_x\om),
\ee 
where $G_\eta(\cdot)$ is the solution to the equation
\be \label{H2}
\eta G_\eta(x)+\nabla^*\nabla G_\eta(x)=\delta(x), \quad x\in\Z^d.
\ee
It is easy to see from (\ref{F2}) that $T_{\xi,\eta} $ is a bounded self-adjoint operator on $\mathcal{H}(\Om)$  with $\|T_{\xi,\eta}\|\le 1$, provided $\xi\in\R^d, \ \eta>0$. 
Now on setting ${\bf a}(\cdot)=\La[I_d-{\bf b}(\cdot)]$, one sees  that (\ref{D2}) is equivalent to the equation
\be \label{I2}
\pa_\xi\Phi(\xi,\eta,\cdot)=PT_{\xi,\eta}[{\bf b}(\cdot)\pa_\xi\Phi(\xi,\eta,\cdot)]
+PT_{\xi,\eta}[{\bf b}(\cdot)] \ .
\ee
Since  $\|T_{\xi,\eta}\|\le 1$ and $\|{\bf b}(\om)\|\le 1-\la/\La, \ \om\in\Om$,  the Neumann series for the solution to (\ref{I2}) converges in $\mathcal{H}(\Om)$. 

It will be useful later to express the operator $T_{\xi,\eta}$ in its Fourier representation. To do this we use the standard notation for the Fourier transform $\hat{h}(\zeta), \ \zeta\in[-\pi,\pi]^d$, of a function $h:\Z^d\ra\C$. Thus 
\be \label{Z2}
\hat{h}(\zeta) \ = \ \sum_{x\in\Z^d} h(x)e^{ix.\zeta}, \quad \zeta\in\R^d,
\ee
and the Fourier inversion formula yields
\be \label{AA2}
h(x) \ = \  \frac{1}{(2\pi)^d}\int_{[-\pi,\pi]^d} \hat{h}(\zeta)e^{-ix.\zeta} \ d\zeta, \quad x\in\Z^d \ .
\ee
Now the action of the translation group $\tau_x, \ x\in\Z^d$, on $\Om$ can be described by a set $A_1,...,A_d$ of commuting self-adjoint operators on $L^2(\Om)$, so that
\be \label{AB2}
f(\tau_x\cdot) \ = \ \exp[ix.A] f(\cdot), \quad x\in\Z^d, \ f\in L^2(\Om),
\ee
where $A=(A_1,..,A_d)$. It follows then from (\ref{G2}) and (\ref{AB2}) that
\be \label{AC2}
T_{\xi,\eta} g(\cdot) \ = \  \frac{e(\xi-A)e^*(\xi-A)}{\eta/\La+e(\xi-A)^*e(\xi-A)}  \ g(\cdot) \ .
\ee

It is easy to see that the function $q(\xi,\eta)$ is $C^\infty$ for $\xi\in\R^d, \ \eta>0$.  In \cite{cn1,cn2} it was further shown that if the translation operators $\tau_x, \ x\in\Z^d$, are ergodic on  $(\Om,\mathcal{F},P)$ then $\lim_{(\xi,\eta)\ra (0,0)}q(\xi,\eta)=q(0,0)$ exists. We can extend this result as follows:
\begin{proposition}
Suppose that the operator  $\tau_{{\bf e}_j}$ is weak mixing on $\Om$ for some $j, \ 1\le j\le d$. Then  $q(\xi,\eta),  \  \xi\in\R^d,\ \eta>0$, extends to a continuous function on $\xi\in\R^d, \ \eta\ge 0$. 
\end{proposition}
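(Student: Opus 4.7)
My strategy is to iterate the fixed-point equation $(\ref{I2})$ into a Neumann series for $\pa_\xi\Phi$ and pass to the limit $\eta\ra 0$ through the spectral representation $(\ref{AC2})$ of $T_{\xi,\eta}$, with the weak-mixing hypothesis serving to rule out resonant spectral mass. First I would write
\[
\pa_\xi\Phi(\xi,\eta,\cdot) \ = \ \sum_{k=0}^\infty \bigl(PT_{\xi,\eta}M_{{\bf b}}\bigr)^k PT_{\xi,\eta}\bigl[{\bf b}(\cdot)\bigr],
\]
where $M_{{\bf b}}$ is the operator of multiplication by ${\bf b}(\cdot)$ on $\mathcal{H}(\Om)$. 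Because $\|M_{{\bf b}}\|\le 1-\la/\La$ and $\|T_{\xi,\eta}\|\le 1$, this series converges geometrically at a rate independent of $(\xi,\eta)$ with $\eta\ge 0$, and by $(\ref{E2})$ it will suffice to prove that for each $\xi_0\in\R^d$ one has $PT_{\xi,\eta}\ra PT_{\xi_0,0}$ strongly on $\mathcal{H}(\Om)$ as $(\xi,\eta)\ra(\xi_0,0)$. Joint continuity of $q$ at interior points $\eta>0$ follows at once from the operator-norm continuity of $T_{\xi,\eta}$ there.

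Next I would define the candidate limit $T_{\xi_0,0}$ through the functional calculus of the commuting self-adjoint generators $A=(A_1,\ldots,A_d)$ in $(\ref{AC2})$, using the matrix symbol
\[
m_{\xi,\eta}(a) \ = \ \frac{e(\xi-a)e^*(\xi-a)}{\eta/\La+|e(\xi-a)|^2}, \quad a\in\R^d,
\]
with the convention $m_{\xi_0,0}(\xi_0)=0$ at the $0/0$ point. Every entry of $m_{\xi,\eta}$ has modulus at most $1$, and $m_{\xi,\eta}(a)\ra m_{\xi_0,0}(a)$ pointwise off the singleton $\{a\equiv\xi_0\bmod 2\pi\Z^d\}$. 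The dominated convergence theorem applied to the scalar spectral measure $\mu_g$ of any $g\in\mathcal{H}(\Om)$ will then yield $T_{\xi,\eta}g\ra T_{\xi_0,0}g$ in $\mathcal{H}(\Om)$, provided $\mu_g$ assigns zero mass to that singleton.

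This last proviso is precisely the content of the weak-mixing hypothesis. Since $\tau_{{\bf e}_j}$ is weak mixing, the Koopman operator $\exp(iA_j)$ has no eigenvalues on the subspace $PL^2(\Om)$ orthogonal to the constants, so the spectral measure of $A_j$ on $PL^2(\Om)$ has no atoms modulo $2\pi$. The marginal of the joint spectral measure of $A$ in the $j$-th coordinate therefore has no atoms on $PL^2(\Om)$, which forces the joint spectral measure on $PL^2(\Om)$ to assign zero mass to every singleton in $\R^d/2\pi\Z^d$. Because $T_{\xi,\eta}$ (being a function of $A$) preserves the decomposition into constants plus $PL^2(\Om)$, and since $P$ annihilates the constant sector, the contribution of $g$ from the constant sector is killed uniformly in $(\xi,\eta)$, and the no-atom property above is exactly what is needed to execute dominated convergence on the $PL^2(\Om)$ sector. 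Combined with the uniform geometric bound on the Neumann series, this gives $\pa_\xi\Phi(\xi,\eta,\cdot)\ra\pa_\xi\Phi(\xi_0,0,\cdot)$ in $\mathcal{H}(\Om)$, and taking the expectation of ${\bf a}(\cdot)$ against this convergence as in $(\ref{E2})$ proves continuity of $q$ at $(\xi_0,0)$.

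The hard step is exactly the strong convergence $PT_{\xi,\eta}\ra PT_{\xi_0,0}$ at the boundary $\eta=0$: one must exclude concentration of spectral mass along the resonant set $a\equiv\xi_0\bmod 2\pi\Z^d$ where the limit symbol $m_{\xi_0,0}$ is discontinuous, and weak mixing of $\tau_{{\bf e}_j}$ is what enters at precisely this point, via the absence of Koopman eigenvalues on $PL^2(\Om)$.
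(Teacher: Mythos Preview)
Your argument is correct and is a genuine alternative to the paper's proof, though the two approaches are close cousins. Both rest on the same consequence of weak mixing of $\tau_{{\bf e}_j}$, namely that the Koopman operator $e^{iA_j}$ has purely continuous spectrum on $PL^2(\Om)$; the difference is in how this fact is exploited.

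The paper works in configuration space. It defines $T_{\xi,0}$ on the dense subspace $\mathcal{E}_\xi(\Om)=\{\pa_\xi h:h\in L^2(\Om)\}$ directly via the kernel formula (\ref{J2}), using the decay estimate (\ref{K2}) to make sense of the sum. Weak mixing is then invoked in the equivalent form that the range $\mathcal{E}_{j,\xi}(\Om)$ of $\pa_{j,\xi}$ is dense in the mean-zero subspace of $\mathcal{H}(\Om)$. This allows any mean-zero $g$ to be approximated by $\pa_{j,\xi}g_{\ve,\xi}$, and on such functions the paper obtains the explicit H\"older-type bound (\ref{Q2}),
\[
\|T_{\xi',\eta}\pa_{j,\xi'}g_{\ve,\xi}-T_{\xi,\eta}\pa_{j,\xi}g_{\ve,\xi}\|\le C|\xi'-\xi|^{1/2}\|g_{\ve,\xi}\|,
\]
uniformly in $\eta>0$, from the pointwise kernel decay. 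You instead stay on the spectral side: weak mixing says the $j$-th marginal of the joint spectral measure of $A$ has no atoms on $P\mathcal{H}(\Om)$, hence the joint measure assigns zero mass to every singleton of the torus, and dominated convergence applied to the bounded symbols $m_{\xi,\eta}$ gives strong convergence of $PT_{\xi,\eta}$ at one stroke. Your route is cleaner and avoids the kernel estimates (\ref{K2}), (\ref{Q2}) entirely; the paper's route, on the other hand, produces a quantitative modulus of continuity on the dense subspace, which is more in the spirit of the explicit $L^p_w$ estimates developed later in Proposition~2.2 and Theorem~3.1.
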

\begin{proof}
We first define an operator $T_{\xi,\eta} $ on $\mathcal{H}(\Om)$ for $\xi\in\R^d$ and $\eta=0$. To do this first observe from (\ref{F2}) that if the function $g(\cdot)\in \mathcal{H}(\Om)$ satisfies $\pa^*_\xi g(\cdot)=0$, we should set  $T_{\xi,0}g(\cdot)=0$. Alternatively if $g(\cdot)=\pa_\xi h(\cdot)$ for some $h\in L^2(\Om)$, then from (\ref{G2}) we should set  $T_{\xi,0}g(\cdot)$ to be given by the formula,
\be \label{J2}
T_{\xi,0} g(\cdot) \ = \   \sum_{x\in \Z^d} \lim_{\eta\ra 0}\left\{\nabla^*(\nabla^*\nabla) G_{\eta/\La}(x)\right\}^*\exp[-ix.\xi]  \ h(\tau_x\cdot) \ .
\ee 
In view of the inequality
\be \label{K2}
\left| \nabla^*(\nabla^*\nabla) G_{\eta}(x)\right| \ \le \ C/[1+|x|]^{d+1} \quad x\in\Z^d, \ \eta>0,
\ee
for a constant $C$ depending only on $d$, we see that the right hand side of (\ref{J2}) is in $\mathcal{H}(\Om)$. Since the orthogonal complement in $\mathcal{H}(\Om)$ of the null space of the operator $\pa^*_\xi$ is the closure of the linear space $\mathcal{E}_\xi(\Om)=\{\pa_\xi h(\cdot):  h\in L^2(\Om)\}$, we have defined  $T_{\xi,0} g(\cdot) $ for a dense set of functions  $g\in\mathcal{H}(\Om)$ and 
\be \label{L2}
\lim_{\eta\ra 0} \|T_{\xi,\eta} g-T_{\xi,0} g\| \ = \ 0 \ .
\ee
Using the fact that $\|T_{\xi,\eta}\|\le 1$ for all $\eta>0$, one sees that the operator $T_{\xi,0}$, defined above on a dense linear subspace of $\mathcal{H}(\Om)$, extends to a bounded operator on $\mathcal{H}(\Om)$ with norm  $\|T_{\xi,0}\|\le 1$, and the limit (\ref{L2}) holds for all $g\in \mathcal{H}(\Om)$.

Having extended the operators $T_{\xi,\eta}, \ \xi\in\R^d,\eta>0$, to the region $\xi\in\R^d, \ \eta\ge 0$, we can use this fact to similarly extend the function $q(\xi,\eta)$. Thus   for $m=1,2...$, let the matrix function $h_m(\xi,\eta)$ be defined for $\eta> 0, \ \xi\in \R^d$, by
\be \label{M2}
h_m(\xi,\eta)=\langle \ {\bf b}(\cdot)\left[ PT_{\xi,\eta}{\bf b}(\cdot)\right]^m \ \rangle \ ,
\ee
whence (\ref{E2}), (\ref{I2}) imply that
\be \label{N2}
q(\xi,\eta) \ = \  \langle \ {\bf a}(\cdot) \ \rangle-\La \sum_{m=1}^\infty h_m(\xi,\eta) \ .
\ee
Evidently we can extend the definition of  $q(\xi,\eta)$ to $\eta=0$ by setting $\eta=0$ in (\ref{M2}), (\ref{N2}). In view of (\ref{L2}) one has that $\lim_{\eta\ra 0} q(\xi,\eta)=q(\xi,0), \ \xi\in\R^d$, whence  the inequality (\ref{B2}) continues to hold in the extended region. 

Assume now that the operator $\tau_{{\bf e}_j}$ is weak mixing on $\Om$ for some $j, \ 1\le j\le d$, and let  $\mathcal{E}_{j,\xi}(\Om)=\{ \ \pa_{j,\xi} g(\cdot):  g\in \mathcal{H}(\Om)   \}$. Then \cite{p} the closure of $\mathcal{E}_{j,\xi}(\Om)\subset \mathcal{H}(\Om)$ contains the orthogonal complement of the constants i.e. $\{g\in \mathcal{H}(\Om): \langle \ g(\cdot) \ \rangle=0\}\subset \bar{\mathcal{E}}_{j,\xi}(\Om)$. Suppose now $g\in\mathcal{H}(\Om)$ and $\langle \ g(\cdot) \ \rangle=0$. Then for any $\ve>0$ there exists $\del>0$ depending only on $\ve,\xi$ and $g(\cdot)$, but not on $\eta>0$, such that
 \be \label{O2}
 \|T_{\xi',\eta} g-T_{\xi,\eta} g\| \ < \ \ve,
 \ee
 provided $|\xi'-\xi|<\del$. To see this first note  that there exists  $g_{\ve,\xi}\in \mathcal{E}_{j,\xi}(\Om)$ which satisfies  $\|g-\pa_{j,\xi} g_{\ve,\xi}\|<\ve/3$. Next  observe that
 \be \label{P2}
 \| \ T_{\xi',\eta}\pa_{j,\xi} g_{\ve,\xi}\ -T_{\xi,\eta} \pa_{j,\xi} g_{\ve,\xi} \ \| \ \le  \ 
  \| \ T_{\xi',\eta}\pa_{j,\xi'} g_{\ve,\xi}\ -T_{\xi,\eta} \pa_{j,\xi} g_{\ve,\xi} \ \| + C|\xi'-\xi| \ \|g_{\ve,\xi}\|,
 \ee
where the constant $C$ depends only on $d$. It also follows from (\ref{K2}) that there is a constant $C$ depending only on $d$ such that
\be \label{Q2}
 \| \ T_{\xi',\eta}\pa_{j,\xi'} g_{\ve,\xi}\ -T_{\xi,\eta} \pa_{j,\xi} g_{\ve,\xi} \ \|  \ \le \ C|\xi'-\xi|^{1/2} \ \|g_{\ve,\xi}\| \ .
\ee
The inequality (\ref{O2}) follows from (\ref{P2}), (\ref{Q2}) on choosing $\del$ sufficiently small independent of $\eta>0$. 
 
 Since $\langle \ P{\bf b}(\cdot) \ \rangle=0$, the continuity of the function $h_1(\xi,\eta)$ in the region $\xi\in\R^d, \ \eta\ge 0$, immediately follows from (\ref{O2}). The continuity of the functions $h_m(\cdot,\cdot), \ m\ge 1$, follow similarly on using the uniform bound  $\|T_{\xi,\eta}\|\le 1, \ \xi\in\R^d,\eta\ge 0$.
\end{proof}
\begin{remark}
Note that the projection operator $P$ in the formula (\ref{M2}) plays a critical role in establishing  continuity. For a constant function $g(\cdot)\equiv v\in\C^d$, one has
\be \label{R2}
T_{\xi,\eta} g(\cdot) \ = \  [e(\xi)^*v]e(\xi) \big/ [\eta/\La+e(\xi)^*e(\xi)] \ ,
\ee
which does not extend to a continuous function of $(\xi,\eta)$ on the set $\xi\in\R^d, \ \eta\ge 0$.
\end{remark}
Next we show that the function $q(\xi,\eta)$ with domain $\xi\in\R^d, \  \eta>0$, can be extended to complex $\xi=\Re\xi+i\Im\xi\in\C^d$ with small imaginary part.
\begin{lem}
For fixed $\eta$ satisfying $0<\eta\le \La$, the $C^\infty$ operator valued function $\xi\ra T_{\xi,\eta}$ from $\R^d$ to the space of bounded linear operators $\mathcal{B}[\mathcal{H}(\Om)]$ on  $\mathcal{H}(\Om)$ has an analytic continuation to a region $\{\xi\in\C^d: |\Im\xi|<C_1\sqrt{\eta/\La}\}$, where $C_1$ is a constant depending only on $d$. For $\xi$ in this region the norm of $T_{\xi,\eta}$ satisfies the inequality $\|T_{\xi,\eta}\|\le 1+C_2 |\Im\xi|^2/[\eta/\La]$, where the constant $C_2$ depends only on $d$.  
\end{lem}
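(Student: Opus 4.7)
The plan is to extract an explicit Fourier multiplier representation of $T_{\xi,\eta}$ from (\ref{AC2}), analytically continue the symbol to complex $\xi$, and then estimate its operator norm. Viewing $e_j(\zeta)=e^{-i\zeta_j}-1$ and the analytic continuation $e^*_j(\zeta):=e^{i\zeta_j}-1$ of the formal-adjoint symbol as entire functions of $\zeta\in\C^d$, the $d\times d$ matrix-valued function
\[
M(\xi,k) \ = \ \frac{e(\xi-k)\,e^*(\xi-k)}{\eta/\La \,+\, \sum_{j=1}^d e^*_j(\xi-k)\,e_j(\xi-k)} , \qquad k\in\R^d,
\]
is entire in $\xi\in\C^d$ away from the zero set of its denominator $D(\xi,k)$. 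Via the joint spectral resolution of the commuting self-adjoint operators $A_1,\dots,A_d$, $T_{\xi,\eta}$ is unitarily equivalent to multiplication by $M(\xi,\cdot)$; hence once non-vanishing of $D$ uniformly in $k$ on the strip is established, this yields both the analytic continuation of $\xi\mapsto T_{\xi,\eta}$ and the identification $\|T_{\xi,\eta}\| = \mathrm{ess\,sup}_k \|M(\xi,k)\|_{\mathrm{op}}$.

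Writing $\xi=u+iv$ with $u,v\in\R^d$ and using $\cos(\alpha+i\beta)=\cos\alpha\cosh\beta-i\sin\alpha\sinh\beta$, a direct computation gives
\[
\Re D(\xi,k) \ = \ \eta/\La \,+\, \mu(u,k) \,-\, 2\sum_{j=1}^d \cos(u_j-k_j)(\cosh v_j-1),
\]
where $\mu(u,k):=2\sum_j(1-\cos(u_j-k_j))\ge 0$. Combined with $\cosh t - 1 \le t^2$ for $|t|\le 1$, this yields $\Re D(\xi,k) \ge \eta/\La + \mu(u,k) - 2|v|^2$. Choosing $C_1 = 1/2$, the hypothesis $|v|\le C_1\sqrt{\eta/\La}$ forces $2|v|^2 \le \eta/(2\La)$, and hence $|D(\xi,k)|\ge \Re D(\xi,k) \ge \eta/(2\La) + \mu(u,k)$, confirming non-vanishing of $D$ on the strip.

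For the norm bound I will use $\|M(\xi,k)\|_{\mathrm{op}}\le \|e(\xi-k)\|_{\C^d}\|e^*(\xi-k)\|_{\C^d}/|D(\xi,k)|$. A triangle inequality followed by $|e^{v_j}-1|\le 2|v_j|$ yields $|e_j(\xi-k)|\le 2|v_j| + \sqrt{2(1-\cos(u_j-k_j))}$, and Cauchy--Schwarz over $j$ then produces $\|e(\xi-k)\|_{\C^d}\le \sqrt{\mu(u,k)} + 2|v|$; the same bound holds for $\|e^*(\xi-k)\|_{\C^d}$. Hence
\[
\|M(\xi,k)\|_{\mathrm{op}} \ \le\ \frac{\bigl(\sqrt{\mu(u,k)}+2|v|\bigr)^2}{\eta/(2\La)+\mu(u,k)}.
\]
An elementary one-variable optimization in $\mu\ge 0$ (the unique critical point is at $\sqrt{\mu}=\eta/(4\La|v|)$, where the supremum equals $1+4|v|^2/(\eta/(2\La))$) gives the claimed norm bound with $C_2=8$.

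The main conceptual point that must be handled carefully is distinguishing $e^*_j(\zeta)=e^{i\zeta_j}-1$, the \emph{analytic continuation} of the formal-adjoint symbol, from the actual Hermitian adjoint of $e_j(\xi-A)$: these coincide only when $\xi$ is real, and only the former yields a symbol $M(\xi,k)$ that is entire in $\xi$. Quantitatively, the delicate part is the $|v|^2$ (rather than $|v|$) dependence in the norm bound; this is what forces the joint optimization in $\mu$ above, since estimating numerator and denominator of $M$ separately for fixed $\mu$ would yield only a linear dependence on $|v|$.
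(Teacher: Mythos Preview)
Your proof is correct and takes a genuinely different route from the paper. The paper establishes the analytic continuation via the configuration-space representation (\ref{G2}) together with the exponential decay bound (\ref{S2}) on the kernel $\nabla\nabla^*G_{\eta/\La}$, which directly gives absolute convergence of the sum for $|\Im\xi|$ smaller than the decay rate. For the norm bound, the paper uses an energy estimate: it multiplies the defining equation (\ref{F2}) by $\bar\psi(\xi,\eta,\cdot)$, takes expectations, and controls the error terms arising from $\Im\xi\ne 0$ via Schwarz, yielding the quadratic dependence on $|\Im\xi|$ through inequality (\ref{T2}). By contrast, you work entirely on the Fourier side via the spectral representation (\ref{AC2}), analytically continue the explicit matrix symbol $M(\xi,k)$, and obtain both the non-vanishing of the denominator and the norm bound from direct computations on the symbol, with an explicit one-variable optimization producing the sharp constant $C_2=8$. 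Your approach is more elementary and yields explicit constants, and your remark distinguishing the analytic continuation $e^*_j(\zeta)=e^{i\zeta_j}-1$ from the Hermitian adjoint is exactly the point that makes the symbol method work. The paper's energy method, on the other hand, is the one that generalizes most directly to the subsequent estimates, in particular to the bound (\ref{V2}) on $\|T_{\xi,\eta}-T_{\Re\xi,\eta}\|$ in Corollary 2.1, where there is no single scalar symbol to optimize.
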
 
\begin{proof}
The fact that there is an analytic continuation to the region $\{\xi\in\C^d: |\Im\xi|<C_1\sqrt{\eta/\La}\}$ is a consequence of the bound on the function $G_\eta(\cdot)$ of (\ref{H2}), 
\be \label{S2}
\left|\nabla\nabla^* G_{\eta}(x)\right| \ \le \ \frac{C_3\exp[-C_4\sqrt{\eta} \ |x|]}{[1+|x|]^d} \ , \quad x\in\Z^d, \ 0<\eta\le 1,
\ee
where the constants $C_3,C_4$ depend only on $d$. The bound on $\|T_{\xi,\eta}\|$ can be obtained from (\ref{F2}).  Thus on multiplying (\ref{F2}) by $\bar{\psi}(\xi,\eta,\om)$, taking the expectation and using the Schwarz inequality, we see that
\begin{multline} \label{T2}
\left\{ \ 1-C_5|\Im\xi|^2/[\eta/\La]  \ \right\}\|\pa_{\Re \xi}\psi(\xi,\eta,\cdot)\|^2 +\frac{\eta}{2\La} \|\psi(\xi,\eta,\cdot)\|^2 \  \le \\
\left\{ \ \frac{1}{2}+C_6|\Im\xi|^2/[\eta/\La]  \ \right\} \|g\|^2 + \frac{1}{2}\|\pa_{\Re \xi}\psi(\xi,\eta,\cdot)\|^2 +\frac{\eta}{4\La} \|\psi(\xi,\eta,\cdot)\|^2 \ , 
\end{multline}
where the constants $C_5,C_6$ depend only on $d$.  Evidently (\ref{T2}) yields the bound on $\|T_{\xi,\eta}\|$ on taking $C_1$ sufficiently small, depending only on $d$.
\end{proof}
\begin{corollary}
For fixed $\eta$  satisfying $0<\eta\le \La$, the $d\times d$ matrix function $q(\xi,\eta)$ with domain $\xi\in\R^d$, has an analytic continuation to a region $\{\xi\in\C^d: |\Im\xi|<C_1\sqrt{\la\eta/\La^2}\}$, where $C_1$ is a constant depending only on $d$. There is a constant $C_2$ depending only on $d$ such that for $\xi$ in this region, 
\be \label{U2}
\|q(\xi,\eta)-q(\Re\xi,\eta)\| \ \le \ \frac{C_2\La^2}{\la} \frac{ |\Im\xi|}{\sqrt{\eta/\La}} \ .
\ee
\end{corollary}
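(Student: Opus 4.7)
The plan is to realize the analytic continuation of $q(\xi,\eta)$ through the Neumann series (\ref{N2}) and then estimate the difference via the resolvent representation coming from (\ref{I2}). By Lemma~2.1 we have $\|T_{\xi,\eta}\|\le 1+C_2|\Im\xi|^2/(\eta/\La)$ on $|\Im\xi|<C_1\sqrt{\eta/\La}$, while $\|{\bf b}(\cdot)\|_\infty\le 1-\la/\La$. I would restrict to $|\Im\xi|<C_1'\sqrt{\la\eta/\La^2}$ with $C_1'$ chosen small enough (depending only on $d$) that $(1-\la/\La)\|T_{\xi,\eta}\|\le 1-\la/(2\La)$. On this smaller complex neighborhood, the series (\ref{N2}) converges geometrically and term-by-term analyticity of $T_{\xi,\eta}$ from Lemma~2.1 yields the analytic continuation of $q(\xi,\eta)$ in $\xi$.

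To obtain the bound (\ref{U2}), I would first control $\|T_{\xi,\eta}-T_{\Re\xi,\eta}\|$ by an operator-valued Cauchy estimate. For any real $\zeta$ on the segment from $\Re\xi$ to $\xi$ and unit vector $v\in\R^d$, the map $s\mapsto T_{\zeta+isv,\eta}$ is analytic on $|s|<C_1\sqrt{\eta/\La}/2$, and its norm there is bounded by an absolute constant (depending only on $d$) thanks to Lemma~2.1. Cauchy then gives $\|\na_\xi T_{\zeta,\eta}\|\le C/\sqrt{\eta/\La}$ uniformly along the segment, and integration yields
\[
\|T_{\xi,\eta}-T_{\Re\xi,\eta}\|\le C\,|\Im\xi|/\sqrt{\eta/\La}.
\]

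The next step is to pass from $T_{\xi,\eta}$ to $q(\xi,\eta)$. From (\ref{I2}), $\pa_\xi\Phi(\xi,\eta,\cdot)=(I-PT_{\xi,\eta}{\bf b}(\cdot))^{-1}PT_{\xi,\eta}{\bf b}(\cdot)$, where ${\bf b}(\cdot)$ acts by multiplication. Applying the resolvent identity $(I-A)^{-1}A-(I-B)^{-1}B=(I-A)^{-1}(A-B)(I-B)^{-1}$ with $A=PT_{\xi,\eta}{\bf b}$ and $B=PT_{\Re\xi,\eta}{\bf b}$ gives
\[
\pa_\xi\Phi(\xi,\eta,\cdot)-\pa_\xi\Phi(\Re\xi,\eta,\cdot)=(I-PT_{\xi,\eta}{\bf b})^{-1}P(T_{\xi,\eta}-T_{\Re\xi,\eta}){\bf b}\,(I-PT_{\Re\xi,\eta}{\bf b})^{-1}.
\]
By our choice of neighborhood each resolvent has operator norm at most a constant multiple of $\La/\la$. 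Combining this with $\|{\bf a}\|_\infty\le\La$ from (\ref{E2}), $\|{\bf b}\|_\infty\le 1$, and the above bound on $\|T_{\xi,\eta}-T_{\Re\xi,\eta}\|$ yields an estimate of the form (\ref{U2}) after tracking constants.

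The main obstacle is the Cauchy-estimate step, which requires both analyticity of $\xi\mapsto T_{\xi,\eta}$ on a disk of radius $\sim\sqrt{\eta/\La}$ and a uniform operator-norm bound on that disk. Both are already supplied by Lemma~2.1, so after taking $C_1'$ a bit smaller than $C_1$ so that the segment from $\Re\xi$ to $\xi$ lies comfortably inside the larger analyticity disk, the proof reduces to careful bookkeeping of the factors of $\La/\la$ arising from the two resolvents.
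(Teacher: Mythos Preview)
Your argument is correct and follows essentially the same route as the paper: analytic continuation via the Neumann series (\ref{M2}),(\ref{N2}) together with Lemma~2.1 and $\|{\bf b}\|\le 1-\la/\La$, then an operator bound $\|T_{\xi,\eta}-T_{\Re\xi,\eta}\|\le C|\Im\xi|/\sqrt{\eta/\La}$ propagated through the series to get (\ref{U2}). The only notable difference is that you obtain this operator bound from a Cauchy estimate using the analyticity and uniform norm bound of Lemma~2.1, whereas the paper derives it directly ``as in Lemma~2.1'' from the energy inequality (\ref{T2}); your resolvent-identity bookkeeping is just the closed-form version of the termwise telescoping the paper uses.
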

\begin{proof} The fact that $q(\xi,\eta)$ has an analytic continuation follows from the representations (\ref{M2}), (\ref{N2}), Lemma 2.1 and the matrix norm bound $\|{\bf b}(\om)\|\le 1-\la/\La, \ \om\in\Om$. On summing the perturbation series (\ref{N2}), we conclude that for $\xi$ satisfying  $ |\Im\xi|<C_1\sqrt{\la\eta/\La^2}$, then $\|q(\xi,\eta)\|\le  C_2\La^2/\la$ for a constant $C_2$ depending only on $d$, provided $C_1$ is chosen sufficiently small, depending only on $d$. By arguing as in Lemma 2.1 we also see that there are positive constants $C_1, C_2$ such  that 
\be \label{V2}
\|T_{\xi,\eta}-T_{\Re\xi,\eta}\| \ \le \  C_2 |\Im\xi|/\sqrt{\eta/\La} \ , \quad \xi\in\C^d, \  |\Im\xi|<C_1\sqrt{\eta/\La} \ .
\ee
The inequality (\ref{U2}) follows from (\ref{V2}).
\end{proof}
We have seen in Corollary 2.1 that the periodic matrix function $q(\xi+ia,\eta), \ \xi\in [-\pi,\pi]^d$, is bounded provided $a\in\R^d$ satisfies $|a|\le C_1\sqrt{\la\eta/\La^2}$. It was shown in \cite{cn2} that derivatives of this function are in certain weak $L^p$ spaces. For $1\le p<\infty$ the weak $L^p$ space $L^p_w([-\pi,\pi]^d)$ is defined to be all measurable functions $f:[-\pi,\pi]^d\ra\C$ such that  
\be \label{W2}
{\rm meas}\{\xi\in[-\pi,\pi]^d:|f(\xi)|>\mu\} \ \le \ C^p/\mu^p \ , \quad \mu>0.
\ee
The weak $L^p$ norm of $f(\cdot)$, $\|f\|_{p,w}$ is the minimum constant $C$ such that (\ref{W2}) holds. 
From \cite{cn2} we have the following:
\begin{proposition}
Let $d\ge 1,0<\eta\le \La, \ 1\le k,k'\le d$, and $m=(m_1,..,m_d)$ be a $d-$tuple of non-negative integers with norm $|m|=m_1+\cdots+m_d$. Then there exists a positive constant $C_1$ depending only on $d$ such that if $|a|\le C_1\sqrt{\la\eta/\La^2}$ and $|m|<d$, the function 
\be \label{X2}
\prod_{j=1}^d \left(\frac{\pa}{\pa \xi_j}\right)^{m_j} q_{k,k'}(\xi+ia,\eta), \quad  \xi\in[-\pi,\pi]^d \ ,
\ee
is in the space $L^p_w([-\pi,\pi]^d)$ with $p=d/|m|$ and its norm is bounded by $C\La $, where the constant $C$ depends only on $d$ and $\La/\la\ge 1$.  

If $|m|=d-1$ and $0<\del\le 1$ then for any $\rho\in\R^d$ satisfying $|\rho|\le 1$, the function 
\be \label{Y2}
\prod_{j=1}^d \left(\frac{\pa}{\pa \xi_j}\right)^{m_j} \left[ \ q_{k,k'}(\xi+\rho+ia,\eta)-q_{k,k'}(\xi+ia,\eta) \ \right]/|\rho|^{1-\del}, \quad  \xi\in[-\pi,\pi]^d \ ,
\ee
is in the space $L^p_w([-\pi,\pi]^d)$ with $p=d/(d-\del)$ and its norm is bounded by $ C_\del\La$, where the constant $C_\del$ depends only on $d, \ \La/\la$ and $\del>0$. 
\end{proposition}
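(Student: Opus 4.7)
The plan is to base the proof on the convergent perturbation expansion (\ref{N2}) for $q(\xi+ia,\eta)$ combined with the Fourier-multiplier representation (\ref{AC2}) of $T_{\xi+ia,\eta}$. By Lemma 2.1 at the shifted argument, if $C_1$ is chosen small enough (depending only on $d$ and $\La/\la$), then $\|T_{\xi+ia,\eta}\|\,\|{\bf b}\|_\infty\le 1-c\la/\La<1$ uniformly in $\xi\in\R^d$, so the series $\sum_{m\ge 1}h_m(\xi+ia,\eta)$ converges geometrically. It suffices therefore to bound $\prod_j(\pa/\pa\xi_j)^{m_j}h_m(\xi+ia,\eta)$ in $L^{d/|m|}_w([-\pi,\pi]^d)$ with a constant whose sum over $m$ is finite.

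Apply the Leibniz rule to $h_m=\langle{\bf b}(\cdot)[PT_{\xi+ia,\eta}{\bf b}(\cdot)]^m\rangle$, distributing the $|m|$ derivatives among the $m$ copies of $T_{\xi+ia,\eta}$. From the configuration-space representation (\ref{G2}), differentiating one factor according to a multi-index $k=(k_1,\ldots,k_d)$ with $|k|\le|m|$ replaces its kernel $\{\nabla\nabla^*G_{\eta/\La}(x)\}^*e^{-ix\cdot(\xi+ia)}$ by $(-ix)^k\{\nabla\nabla^*G_{\eta/\La}(x)\}^*e^{-ix\cdot(\xi+ia)}$, where $x^k=x_1^{k_1}\cdots x_d^{k_d}$. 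Combined with the exponential decay (\ref{S2}) and $|a|\le C_1\sqrt{\la\eta/\La^2}$, the absolute value of this modified kernel is dominated by $C[1+|x|]^{|k|-d}$, with a constant depending only on $d$.

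The crucial Fourier-analytic input is that a convolution operator on $\Z^d$ whose kernel $K$ satisfies $|K(x)|\le[1+|x|]^{|k|-d}$ with $|k|<d$ has Fourier symbol in $L^{d/|k|}_w([-\pi,\pi]^d)$ with norm bounded by a constant depending only on $d$; this is the standard weak-type behavior modeled by $|\xi|^{-|k|}\in L^{d/|k|}_w(\R^d)$. Viewing $\pa_\xi^m h_m(\xi+ia,\eta)$ as an expectation (via the spectral calculus of the commuting generators $A$ of (\ref{AB2})) of a product of such Fourier multipliers composed with translations of ${\bf b}$, and applying the weak-type Hölder inequality $\|f_1\cdots f_n\|_{p,w}\le C_n\|f_1\|_{p_1,w}\cdots\|f_n\|_{p_n,w}$ with $1/p=1/p_1+\cdots+1/p_n$ and $1/p_j=|k_j|/d$ summing to $1/p=|m|/d$, we conclude that the differentiated term lies in $L^{d/|m|}_w$ with norm of size $C\La$, multiplied by a geometric factor $(1-c\la/\La)^{m-|m|}$ coming from the undifferentiated operator-norm bounds.

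For the Hölder-continuous variant (\ref{Y2}), telescope the increment $q(\xi+\rho+ia,\eta)-q(\xi+ia,\eta)$ across the $m$ operator factors of each $h_m$; in the displaced factor, the kernel picks up an extra $[e^{-ix\cdot\rho}-1]$. Using the interpolation $|e^{-ix\cdot\rho}-1|\le 2^{1-\del}|\rho|^{1-\del}|x|^{1-\del}$ for $0<\del\le 1$ and $|\rho|\le 1$, the singularity index of this modified factor becomes $|k|+1-\del$, so that the total singularity is $d-\del$ (since the remaining derivatives satisfy $|m|=d-1$), placing the associated Fourier symbol in $L^{d/(d-\del)}_w$ with norm proportional to $|\rho|^{1-\del}$. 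The main obstacle throughout is the combinatorial bookkeeping of distributing $|m|$ derivatives across $m$ factors and summing over $m$: the number of such distributions grows only polynomially in $m$ and is absorbed by the geometric decay from the undifferentiated factors, as carried out in \cite{cn2}.
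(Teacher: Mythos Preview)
Your outline is close in spirit to the argument of \cite{cn2}, but the step where you invoke the weak-type H\"older inequality on Fourier symbols has a genuine gap. After differentiating, a typical term of $\partial_\xi^m h_r(\xi+ia,\eta)$ has the form
\[
\Big\langle\, {\bf b}\,P\sigma_1(\xi+ia-A){\bf b}\,P\sigma_2(\xi+ia-A){\bf b}\cdots P\sigma_r(\xi+ia-A){\bf b}\,\Big\rangle,
\]
where each $\sigma_j$ is either the undifferentiated symbol from (\ref{AC2}) or one of its $\xi$-derivatives. It is true that $|\sigma_j(\zeta)|\lesssim|\zeta|^{-|k_j|}$ places $\sigma_j\in L^{d/|k_j|}_w$ as a function of $\zeta$, but here $\sigma_j$ is evaluated at $\xi-A$ and the factors are interspersed with the noncommuting multiplications by ${\bf b}(\cdot)$. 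The expectation above is therefore \emph{not} a pointwise product of functions of $\xi$ to which weak H\"older applies; in configuration space it is a multiple sum $\sum_{x_1,\dots,x_r}K_1(x_1)\cdots K_r(x_r)e^{-i(x_1+\cdots+x_r)\cdot\xi}C(x_1,\dots,x_r)$ with a nontrivial correlation $C$ that does not factor through $x_1+\cdots+x_r$. Your inequality $\|f_1\cdots f_n\|_{p,w}\le C_n\prod\|f_j\|_{p_j,w}$ simply does not see this structure.

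The mechanism that actually produces the weak-$L^p$ bound, and which the present paper summarizes from \cite{cn2} in (\ref{K4})--(\ref{M4}) and Lemma~4.3, has two additional ingredients you are missing. First, each differentiated symbol is \emph{factored} as $\sum_r\hat g_r(\zeta)^*\hat h_r(\zeta)$ with $g_r,h_r$ having the half-order decay (\ref{M4}); this allows the term to be written as an inner product in $\mathcal H(\Omega)$,
\[
\big\langle\,[T_{r_1,\eta}g(\xi,\cdot)]^*\,T_{r_2,\eta}h(\xi,\cdot)\,\big\rangle,
\]
to which one applies Cauchy--Schwarz in $\omega$. Second, one needs the $L^p(\Z^d)\to L^q([-\pi,\pi]^d\times\Omega)$ bounds (\ref{H4}) (and their multilinear extension Lemma~4.3) on the operators $T_{m_1,\dots,m_k,\eta}$ of (\ref{G4}),(\ref{N4}); the $(2,2)$ endpoint here is nontrivial and uses Plancherel in $\xi$ together with the $L^2(\Omega)$ structure. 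Only after Cauchy--Schwarz does one obtain honest scalar functions of $\xi$ whose (strong) $L^q$ norms combine via H\"older to give the $L^{d/|m|}_w$ bound. The same factorization is what makes the difference estimate (\ref{Y2}) work. Your telescoping and the interpolation $|e^{-ix\cdot\rho}-1|\le 2^{1-\delta}|\rho|^{1-\delta}|x|^{1-\delta}$ are correct, but they must be fed into this $T_{m,\eta}$ machinery rather than into a direct weak-H\"older on symbols.
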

\section{Configuration space estimates from Fourier space estimates}
In this section we shall show how to obtain configuration space estimates on $G_{{\bf a},\eta}(x), \ x\in\Z^d$, from  Fourier space estimates on the function $q(\xi,\eta), \ \xi\in\R^d$. In \cite{cn2} it was shown that for $d\ge 3$, Proposition 2.2 implies the inequality
\be \label{A3}
0 \ \le G_{{\bf a},\eta}(x) \ \le  \   \frac{C\exp[-\ga |x| \sqrt{\eta/\La}]}{\La( |x|+1)^{d-2}}, \quad x\in\Z^d, \ 0<\eta\le \La,
\ee 
where the positive constants $C, \ \ga$ depend only on $d$ and $\La/\la$. It was also shown that for $d\ge 2$, Proposition 2.2 implies a similar inequality  for the gradient of $G_{{\bf a},\eta}(x)$,
\be \label{B3}
|\na G_{{\bf a},\eta}(x)| \ \le  \   \frac{C\exp[-\ga |x| \sqrt{\eta/\La}]}{\La( |x|+1)^{d-1}}, \quad x\in\Z^d, \ 0<\eta\le \La.
\ee 
Finally for $d\ge 1$, Proposition 2.2 implies H\"{o}lder continuity of  $\na G_{{\bf a},\eta}(x)$,
\begin{multline} \label{C3}
|\na G_{{\bf a},\eta}(x')-\na G_{{\bf a},\eta}(x)| \ \le  \   |x'-x|^{1-\del}\frac{C_\del\exp[-\ga |x| \sqrt{\eta/\La}]}{\La( |x|+1)^{d-\del}}, \\
  0<\eta\le \La, \quad  \ x',x\in\Z^d, \  1/2\le (|x'|+1)/(|x|+1)\le 2, 
\end{multline} 
for any $\del, \ 0<\del\le 1$, where the constant $C_\del$ depends now on $\del>0$ as well as on $d$ and $\La/\la$. 

In subsequent sections we shall establish  a strengthened version of Proposition 2.2 for certain environments $(\Om,\mathcal{F},P)$ as follows:
\begin{theorem}
There exist positive constants $C_1,C_2$ and  $\al\le1$ depending only on $d$ and $\La/\la$,  such that
\begin{multline} \label{D3}
\|q(\xi',\eta')-q(\xi,\eta)\|\le C_1\La \left[ \ |\xi'-\xi|^\alpha +|(\eta'-\eta)/\La|^{\al/2} \ \right]  \ , \\
0<\eta\le \eta'\le \La, \quad \xi',\xi\in\C^d \ {\rm with \ } |\Im\xi|+|\Im\xi'|\le C_2\sqrt{\eta/\La} \ .
\end{multline}
With the same assumptions as in Proposition 2.2,  the derivative (\ref{X2}) is in the space $L^p_w([-\pi,\pi]^d)$ with $p=d/(|m|-\al)$ and its norm is bounded by $C\La$, where the constant $C$ depends only on $d$ and $\La/\la$.  The difference (\ref{Y2}) is in the space $L^p_w([-\pi,\pi]^d)$ with $p=d/(d-\del-\al)$ and its norm is bounded by $C\La$, where now $\al$ and $C$ depend on $\del$ as well as $d$ and $\La/\la$. 
\end{theorem}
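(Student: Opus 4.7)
The plan is to strengthen Proposition 2.2 by exploiting an environment-dependent upgrade of Lemma 2.1. The key input, to be verified separately for each of the three environments of Theorems 1.1--1.3 (via Meyer's theorem in the independent variable case, and via the Brascamp--Lieb inequality combined with Meyer's theorem in the field-theory cases), is that $T_{\xi,\eta}$ extends to a bounded operator on $L^p(\Om,\C^d)$ for some $p>2$ depending only on $d$ and $\La/\la$, with norm $\|T_{\xi,\eta}\|_{p\to p}$ close enough to $1$ that $(1-\la/\La)\|T_{\xi,\eta}\|_{p\to p}<1$. This guarantees convergence of the Neumann series (\ref{I2}), and hence of the representation (\ref{N2}) for $q(\xi,\eta)$, in $L^p(\Om)$ uniformly for $\xi$ in the complex strip of Lemma 2.1.

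Given this improved operator bound, I would revisit the proof of Proposition 2.2 from \cite{cn2}, in which derivatives of $q(\xi,\eta)$ are computed term-by-term on the series (\ref{M2})--(\ref{N2}). Distributing $|m|$ derivatives across the factors $PT_{\xi,\eta}\bb$ via the product rule produces expressions involving products of derivatives of $T_{\xi,\eta}$; by the Fourier representation (\ref{AC2}), each $\xi$-derivative increases the order of singularity of the symbol at $\xi = A$ by one, which is what dictates the weak $L^{d/|m|}$ bound of Proposition 2.2. I would then replace the $L^2(\Om)$ isometry bound at each step by the stronger $L^p(\Om)$ bound with $p>2$. Interpolating by Marcinkiewicz between the original weak $L^{d/|m|}$ estimate and the improved $L^\infty$ control coming from the $L^p(\Om)$ bound via H\"older duality upgrades the exponent from $p=d/|m|$ to $p=d/(|m|-\al)$, for some $\al>0$ depending only on $d$ and $\La/\la$. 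The difference estimate (\ref{Y2}) is treated analogously, yielding weak $L^{d/(d-\del-\al)}$.

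For the H\"older estimate (\ref{D3}) on $q$ itself, I would combine the first-derivative case ($|m|=1$, weak $L^{d/(1-\al)}$) of the improved bound with the uniform bound $\|q(\xi,\eta)\|\le C\La^2/\la$ from Corollary 2.1. A Morrey-type Sobolev embedding adapted to weak $L^p$ yields H\"older continuity of $q(\cdot,\eta)$ with exponent $\al$. For the $\eta$-dependence, the analogous argument applied to $\pa_\eta T_{\xi,\eta}$, whose Fourier symbol carries an extra factor of $[\eta/\La+e^*(\xi-A)e(\xi-A)]^{-1}$, yields H\"older exponent $\al/2$, consistent with the parabolic scaling $\eta\sim|\xi|^2$ already visible in the radius $|\Im\xi|\le C_1\sqrt{\eta/\La}$ of Corollary 2.1. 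Combining the two regularity statements and extending to complex $\xi$ by the analytic continuation of Corollary 2.1 yields (\ref{D3}).

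The principal obstacle is establishing the Meyer-type $L^p$ bound on $T_{\xi,\eta}$ uniformly in the complex strip $|\Im\xi|\le C_2\sqrt{\eta/\La}$ and as $\eta\ra 0$. Since Meyer's theorem is perturbative around $L^2$, one must carefully quantify how far $p$ can be pushed from $2$ starting only from the $L^2$ isometry bound $\|T_{\xi,\eta}\|\le 1$ and the kernel decay of $\nabla\nabla^* G_{\eta/\La}$ from (\ref{G2}) and (\ref{S2}). In the field-theory cases this further requires the Brascamp--Lieb inequality to reduce $L^p$ estimates involving products of $\bb(\cdot)$'s against $T_{\xi,\eta}$ to Gaussian-type $L^p$ estimates, which is delicate since the measure (\ref{L1}) is only log-concave rather than Gaussian; it is at this stage that the environment-dependent constants $\al$ and $C_1$ in Theorems 1.1--1.3 are fixed.
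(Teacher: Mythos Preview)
Your plan is close to the paper's for the independent variable environment, but contains a genuine gap for the field theory environments.

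For the i.i.d.\ case, the paper does essentially what you describe: it shows (Lemma 4.1) that $T_{\xi,\eta}$ is bounded on the Fock spaces $\mathcal{H}^p_{\mathcal{F}}(\Z^d)$ for $p$ near $2$ with norm $\le 1+\del(p)$, then feeds this into the multilinear operators $T_{m_1,\dots,m_k,\eta}$ of (\ref{N4}) and uses Riesz convexity (Lemma 4.4) to interpolate between the $q=2$ estimate of Lemma 4.3 and the improved $q=\infty$ estimate coming from the $L^p$ bound. The H\"older continuity (\ref{D3}) is not obtained by Morrey embedding from the derivative bound, as you propose, but directly from the telescoping identity (\ref{B4}) and the smoothing estimate (\ref{C4}) for $P[T_{\xi',\eta}-T_{\xi,\eta}]:L^p\to L^2$; your route would also work but is not the paper's.

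The gap is in the field theory cases. You assert that the key input is boundedness of $T_{\xi,\eta}$ on $L^p(\Om,\C^d)$ for some $p>2$, to be obtained ``via the Brascamp--Lieb inequality combined with Meyer's theorem''. But BL is a variance inequality and gives no direct handle on $\|T_{\xi,\eta}\|_{L^p(\Om)\to L^p(\Om)}$ for $p\neq 2$; the paper never establishes such a bound and it is not clear it holds. Instead, the paper applies BL to the \emph{field derivative} $\pa/\pa\phi(z)$ of $\sum_x g(x)\pa_\xi\Phi(\xi,\eta,\tau_x\cdot)v$, which via the commutation identity (\ref{W5})--(\ref{Z5}) produces an auxiliary function $G(y,\phi(\cdot))$ on $\Z^d\times\Om$ satisfying the divergence-form equation (\ref{AI5}). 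Meyer's theorem is then applied to the operator $T_{\xi,\eta}$ of (\ref{AJ5}) acting on $L^q(\Z^d\times\Om,\C^d)$---the lattice variable, not the probability space---yielding (\ref{AP5}), and Young's inequality converts this into the $L^p(\Z^d)$ bound on $g$ in Proposition 5.3 and Lemma 5.1. This Helffer--Sj\"ostrand style detour through $\Z^d\times\Om$ is the essential mechanism; without it, your proposal for the field theory environments does not go through.
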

Theorem 3.1 enables us to compare the function $G_{{\bf a},\eta}(x),  \  x\in\Z^d$, to the function $G_{{\bf a}_{\rm hom},\eta}(x),  \  x\in\Z^d$, defined by  
\be \label{E3}
G_{{\bf a}_{\rm hom},\eta}(x) \ = \ 
\frac{1}{(2\pi)^d}\int_{[-\pi,\pi]^d} 
\frac{e^{-i\xi.x}}{\eta+e(\xi)^*q(0,0)e(\xi)} \ d\xi \ .
\ee
Note that  the function $G_{{\bf a}_{\rm hom},\eta}(\cdot)$  of (\ref{E3}) is not the same as the Green's function for the PDE (\ref{D1}) restricted to $\Z^d$. One can however easily estimate the difference on $\Z^d$ between these two functions. 
\begin{theorem}
For $\eta$ satisfying $0<\eta\le\La,$ there exist positive constants $\al,\ga,C$ with  $\al\le 1$, depending only on $d$ and $\La/\la$ such that
\begin{eqnarray} \label{F3}
|G_{{\bf a},\eta}(x)-G_{{\bf a}_{\rm hom},\eta}(x)|  &\le&  \frac{C}{\La(|x|+1)^{d-2+\alpha}} e^{-\gamma\sqrt{\eta/\La} |x|}, \    \ x\in\Z^d,  \\
 |\na G_{{\bf a},\eta}(x)-\na G_{{\bf a}_{\rm hom},\eta}(x)|  &\le& \frac{C}{\La(|x|+1)^{d-1+\alpha}} e^{-\gamma\sqrt{\eta/\La} |x|}, \   \ x\in\Z^d,  \label{G3}
\end{eqnarray}
where (\ref{F3}) holds if $d\ge 2$ and (\ref{G3}) if $d\ge 1$.
For $d\ge 1$ and $\del$ satisfying $0<\del\le 1$, there is the inequality
\begin{multline} \label{H3}
\big| \ [\na G_{{\bf a},\eta}(x')-\na G_{{\bf a}_{\rm hom},\eta}(x')]-[\na G_{{\bf a},\eta}(x)-\na G_{{\bf a}_{\rm hom},\eta}(x)] \ \big|  \\
\le \ |x'-x|^{1-\del} \frac{C}{\La(|x|+1)^{d-\del+\alpha}} e^{-\gamma\sqrt{\eta/\La} |x|}, \quad   \ x',x\in\Z^d, \ 
1/2\le (|x'|+1)/(|x|+1)\le 2, 
\end{multline}
where $C$ and $\al$ in (\ref{H3}) depend on $\del>0$ as well as $d$ and $\La/\la$. In particular $\al<\del$. 
\end{theorem}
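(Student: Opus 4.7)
The plan is to deduce the bounds from the Fourier representation (\ref{A2}) for $G_{{\bf a},\eta}$ and the analogous representation (\ref{E3}) for $G_{{\bf a}_{\rm hom},\eta}$. Subtracting and using the algebraic identity
\[
\frac{1}{\eta+e^*q(\xi,\eta)e}-\frac{1}{\eta+e^*q(0,0)e}
\;=\;\frac{e(\xi)^*\bigl[q(0,0)-q(\xi,\eta)\bigr]e(\xi)}{\bigl[\eta+e^*q(\xi,\eta)e\bigr]\bigl[\eta+e^*q(0,0)e\bigr]},
\]
the bound (\ref{D3}) of Theorem 3.1 gives a numerator of size $\Lambda\bigl(|\xi|^\alpha+(\eta/\Lambda)^{\alpha/2}\bigr)|e(\xi)|^2$, while (\ref{B2}) plus $|e(\xi)|^2\gtrsim |\xi|^2$ on $[-\pi,\pi]^d$ yields denominators $\gtrsim(\eta/\Lambda+|\xi|^2)^2$. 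Hence the integrand in the Fourier representation of $G_{{\bf a},\eta}-G_{{\bf a}_{\rm hom},\eta}$ is dominated by $C\Lambda^{-1}|\xi|^\alpha/(\eta/\Lambda+|\xi|^2)$, a symbol whose Fourier transform decays like $|x|^{-(d-2+\alpha)}$, matching (\ref{F3}).

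To upgrade this to pointwise decay I would follow the Fourier-inversion strategy of \cite{cn2}: write derivatives with respect to $x$ as products with $e(\xi)$, rewrite $e^{-ix\cdot\xi}$-oscillation via integration by parts to trade smoothness for decay, and control the resulting singular integrals using the weak $L^p$ bounds on derivatives of $q$ that Theorem 3.1 furnishes. The passage from (\ref{F3}) to (\ref{G3}) is immediate from the extra factor $e(\xi)$ in the Fourier representation of $\nabla G_{{\bf a},\eta}-\nabla G_{{\bf a}_{\rm hom},\eta}$, giving one additional power of $|\xi|$ and hence one additional power of $1/|x|$. For (\ref{H3}) one represents the difference $\nabla G(x')-\nabla G(x)$ by multiplication with $e(\xi)(e^{-i\xi\cdot(x'-x)}-1)$; the elementary bound $|e^{-i\xi\cdot h}-1|\le C|\xi|^{1-\delta}|h|^{1-\delta}$ gives the factor $|h|^{1-\delta}$ and the extra $|\xi|^{1-\delta}$ converts a decay of order $|x|^{-(d-1+\alpha)}$ into $|x|^{-(d-\delta+\alpha)}$, using now the second clause of Theorem 3.1 with the $\delta$-loss in the weak-$L^p$ bound on the difference quotient (\ref{Y2}). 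Since that weak-$L^p$ bound already consumes $\delta$ of smoothness, one is forced to take $\alpha<\delta$.

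To obtain the exponential weight $e^{-\gamma\sqrt{\eta/\Lambda}|x|}$ I would deform the contour from $\xi\in[-\pi,\pi]^d$ to $\xi+ia\in[-\pi,\pi]^d+ia$, where $a=-c\sqrt{\eta/\Lambda}\,x/|x|$ and $c$ is a small constant depending only on $d$ and $\Lambda/\lambda$. The periodicity of the integrand makes this shift cost nothing across the boundary, while Lemma 2.1 and Corollary 2.1 guarantee that both $T_{\xi+ia,\eta}$ and $q(\xi+ia,\eta)$ remain analytic and satisfy uniform bounds in this strip. The factor $e^{-ix\cdot(\xi+ia)}=e^{-a\cdot x}e^{-ix\cdot\xi}$ produces the desired Gaussian-type exponential, while the denominators continue to satisfy $|\eta+e(\xi+ia)^*q(\xi+ia,\eta)e(\xi+ia)|\gtrsim \eta+\lambda|\xi|^2$ once $c$ is chosen small enough to absorb the perturbation in (\ref{U2}). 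The Hölder modulus (\ref{D3}) continues to control the numerator on the shifted contour by $\Lambda(|\xi|^\alpha+(\eta/\Lambda)^{\alpha/2})|e(\xi+ia)|^2$.

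The main obstacle I expect is the bookkeeping required to make the contour deformation, the Hölder bound on $q$, and the weak-$L^p$ singular-integral estimate cohabit, particularly in (\ref{H3}) where the second-difference argument requires one to split the integral into the regions $|\xi|\lesssim 1/|x|$ and $|\xi|\gtrsim 1/|x|$ and to use the two separate inequalities in Theorem 3.1 on these pieces respectively, all while tracking the constraint $\alpha<\delta$. The remainder of the work is essentially the reduction done in \cite{cn2} for (\ref{A3})--(\ref{C3}), repeated verbatim with $q(\xi,\eta)$ replaced by $q(\xi,\eta)-q(0,0)$ and with the new Hölder-improved singular-integral bounds of Theorem 3.1.
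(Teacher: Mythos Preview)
Your proposal is correct and follows essentially the same route as the paper: the Fourier representation, the algebraic identity for the difference of resolvents, contour deformation by $ia$ with $|a|\sim\sqrt{\eta/\Lambda}$ for the exponential factor, and then the combination of near/far splitting at $|\xi|\sim 1/|x|$, integration by parts, and the weak-$L^p$ bounds of Theorem~3.1.

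Two small points of comparison. First, the paper inserts an intermediate function $\tilde G_{{\bf a}_{\rm hom},\eta}$ built from $q(0,\eta)$ rather than $q(0,0)$, so that the main estimate only uses the $\xi$-H\"older continuity of $q$; the passage from $\tilde G$ to $G_{{\bf a}_{\rm hom},\eta}$ is then a separate application of the $\eta$-H\"older bound. You compare directly to $q(0,0)$, which works just as well since (\ref{D3}) controls both increments simultaneously and $|a|^\alpha\lesssim(\eta/\Lambda)^{\alpha/2}$ is absorbed into the same symbol bound. Second, the passage from (\ref{F3}) to (\ref{G3}) is not quite ``immediate'': the extra factor $e(\xi)$ improves the symbol by one power of $|\xi|$, but converting that into one more power of $1/|x|$ requires an additional oscillation argument. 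The paper does this via the translation trick $\int e^{-i\xi\cdot x}F(\xi)\,d\xi=\tfrac12\int e^{-i\xi\cdot x}[F(\xi)-F(\xi+\rho)]\,d\xi$ with $e^{-i\rho\cdot x}=-1$, which is where the difference-quotient clause of Theorem~3.1 on (\ref{Y2}) actually enters for (\ref{G3}); your integration-by-parts language covers this, but be aware that in the actual execution you will need the H\"older difference bound on $\nabla_\xi q$, not just the pointwise weak-$L^p$ bound on (\ref{X2}). For (\ref{H3}) the paper combines \emph{both} devices---the bound $|e^{-i\xi\cdot x}-e^{-i\xi\cdot x'}|\le C|\xi|^{1-\delta}|x-x'|^{1-\delta}$ \emph{and} the $\rho$-translation with a second exponent $\delta'$---exactly as you anticipate, and the convergence of the resulting dyadic sum is what forces $\alpha<\delta$.
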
 
\begin{proof} Let $\tilde{G}_{{\bf a}_{\rm hom},\eta}(x)$ be the function
\be \label{J3}
\tilde{G}_{{\bf a}_{\rm hom},\eta}(x) \ = \ 
\frac{1}{(2\pi)^d}\int_{[-\pi,\pi]^d} 
\frac{e^{-i\xi.x}}{\eta+e(\xi)^*q(0,\eta)e(\xi)} \ d\xi \ ,
\ee
so (\ref{A2}) and Corollary 2.1 imply that
\be \label{K3}
G_{{\bf a},\eta}(x)-\tilde{G}_{{\bf a}_{\rm hom},\eta}(x) \ = \ \frac{e^{a.x}}{(2\pi)^d}\int_{[-\pi,\pi]^d} 
e^{-i\xi.x} f(\xi) \ d\xi \ , 
\ee
where the function $f(\xi)$ is given by the formula
\be \label{L3}
f(\xi) \ =  \ \frac{e(\xi-ia)^*\{q(0,\eta)-q(\xi+ia,\eta)\}e(\xi+ia)   }{\left[\eta+e(\xi-ia)^*q(0,\eta)e(\xi+ia)\right] 
\left[\eta+e(\xi-ia)^*q(\xi+ia,\eta)e(\xi+ia)\right]}  \ .
\ee
It follows from (\ref{D3}) and Corollary 2.1  that there are positive constants $C_1,C_2$ depending only on $d$ and $\La/\la$ such that  the function in (\ref{L3}) is bounded by 
\be \label{M3}
|f(\xi)| \ \le \ \frac{C_1}{\La[\eta/\La+|e(\xi)|^2]^{1-\al/2}} \ , \quad {\rm for \ } |a|\le C_2\sqrt{\eta/\La} \ .
\ee
Choosing $a$ appropriately and $\al<1$ one sees already from (\ref{K3}), (\ref{M3}) that if $d=1$ the function $|G_{{\bf a},\eta}(x)-\tilde{G}_{{\bf a}_{\rm hom},\eta}(x)|$ is bounded by the RHS of (\ref{F3}) provided $\sqrt{\eta/\La}|x|\ge 1$. 

We wish to show for $d=1$ that $|\na G_{{\bf a},\eta}(x)-\na\tilde{G}_{{\bf a}_{\rm hom},\eta}(x)|$ is bounded by the RHS of (\ref{G3}). The representation of $\na G_{{\bf a},\eta}(x)-\na\tilde{G}_{{\bf a}_{\rm hom},\eta}(x)$ corresponding to (\ref{K3}) is 
\be \label{N3}
\na G_{{\bf a},\eta}(x)-\na\tilde{G}_{{\bf a}_{\rm hom},\eta}(x) \ = \ \frac{e^{a.x}}{(2\pi)^d}\int_{[-\pi,\pi]^d} 
e^{-i\xi.x} e(\xi)f(\xi) \ d\xi \ .
\ee
 Let $\rho\in\R^d$ be the vector of minimum norm which satisfies $e^{-i\rho.x}=-1$. Then we have that
\be \label{O3}
\big| \ \int_{[-\pi,\pi]^d} e^{-i\xi.x} e(\xi) f(\xi) \ d\xi \ \big|  \le \ \frac{1}{2}  \int_{[-\pi,\pi]^d} |e(\xi) f(\xi)-e(\xi+\rho)f(\xi+\rho)| \ d\xi  \ .
\ee
The integral on the RHS of (\ref{O3}) can be estimated by separately estimating the integral  over the region $|\xi|<C/[|x|+1]$ and $|\xi|>C/[|x|+1]$ for sufficiently large universal constant $C$. Using the estimate for $|f(\xi)|$ obtained from (\ref{M3}) and the fact that $|\rho|\le C'/[|x|+1]$, we see that the integral over $|\xi|<C/[|x|+1]$  is bounded by a constant times $[|x|+1]^{-\al}$. To estimate the integral over $|\xi|>C/[|x|+1]$ we use the H\"{o}lder continuity of the function $f(\cdot)$. From (\ref{D3}) it follows that
\be \label{P3}
|e(\xi) f(\xi)-e(\xi+\rho)f(\xi+\rho)|  \  \le \  \frac{C_1|\rho|^\al}{\La[\eta/\La+|e(\xi)|^2]^{1/2}}  \ ,\quad |\xi|>C/[|x|+1] \ ,
\ee
for a constant $C_1$ depending only on $d$ and $\La/\la$, provided $C$ is sufficiently large. Hence 
 the integral over $|\xi|>C/[|x|+1]$  is bounded by a constant times $[|x|+1]^{-\al}\log[|x|+2]$. We have shown that $|\na G_{{\bf a},\eta}(x)-\na\tilde{G}_{{\bf a}_{\rm hom},\eta}(x)|$ is bounded by the RHS of (\ref{G3}) for any $\alpha$ less than the H\"{o}lder constant in (\ref{D3}). We can similarly bound  $|\na  \tilde{G}_{{\bf a}_{\rm hom},\eta}(x)-\na G_{{\bf a}_{\rm hom},\eta}(x)|$ by the RHS of (\ref{G3}) on using the estimate $\|q(0,\eta)-q(0,0)\|\le C\La(\eta/\La)^{\al/2}$ from (\ref{D3}). Hence (\ref{G3}) holds for $d=1$, and by similar argument  (\ref{H3}).  

In order to prove (\ref{F3}) for $d\ge 2$ we need to use the bounds on the derivatives of the function $q(\cdot,\cdot)$ given in Proposition 2.2. For $d=2$ the first derivative estimate is sufficient. Thus we write
\begin{multline} \label{Q3}
\int_{[-\pi,\pi]^d} e^{-i\xi.x} f(\xi) \ d\xi \ = \ \int_{|\xi|<C/[|x|+1]} e^{-i\xi.x} f(\xi) \ d\xi  + \\
\frac{i}{|x|^2}\int_{|\xi|=C/[|x|+1]} [x.{\bf n}(\xi)] \ e^{-i\xi.x} f(\xi) \ d\xi -
\frac{i}{|x|^2}\int_{|\xi|>C/[|x|+1]}  \ e^{-i\xi.x} \  [x.\na_\xi f(\xi)] \ d\xi \ , 
\end{multline}
where ${\bf n}(\xi)$ is the unit inward normal vector  at $\xi$ on the sphere $\{|\xi|=C/[|x|+1]\}$.  It follows from (\ref{M3}) that the first two integrals on the RHS of (\ref{Q3}) are bounded by $C_2/\La[|x|+1]^{d-2+\al}$ for some constant $C_2$ depending only on $d$ and  $\La/\la$. The third integral can be similarly bounded for $d=2$ by using the fact from Theorem 3.1 that the function $\na_\xi q(\xi+ia,\eta)$ is in $L_w^p([-\pi,\pi]^2)$ with $p=2/(1-\al)$. To see this we note that for any  measure space $(X,\mathcal{B},\mu)$  and $1< p<\infty$ one has that if  $g\in L_w^p(X)$ then
\be \label{R3}
\int_E |g| d\mu \ \le \ C_p\|g\|_{p,w} \  m(E)^{1-1/p} \ , \qquad E\in\mathcal{B} ,
\ee
where $C_p$ depends only on $p$. It follows that for $n=1,2,..$,
\be \label{S3}
\int_{2^{n-1}C/[|x|+1]<|\xi|<2^nC/[|x|+1]} |\na_\xi f(\xi)| \ d\xi   \le \  \frac{C_1 2^{-(1-\al)n}}{\La[|x|+1]^{\al-1}} \ ,
\ee
where $C_1$ depends only on $d=2$ and $\La/\la$. Hence on summing over $n\ge 1$ in (\ref{S3}) we see that for $d=2$ the function $|G_{{\bf a},\eta}(x)-\tilde{G}_{{\bf a}_{\rm hom},\eta}(x)|$ is bounded by the RHS of (\ref{F3}). We can bound  $| \tilde{G}_{{\bf a}_{\rm hom},\eta}(x)-G_{{\bf a}_{\rm hom},\eta}(x)|$ using the H\"{o}lder continuity (\ref{D3}) of $q(0,\eta), \ 0\le \eta\le \La$,  by the RHS of (\ref{F3}) for any $\al$ smaller than the H\"{o}lder constant in (\ref{D3}).  We have therefore proven (\ref{F3}) for $d=2$. 

To prove (\ref{G3}) for $d=2$ we need to use the fact from Theorem 3.1 that the difference $[\na_\xi q(\xi+\rho+ia,\eta)-\na_\xi q(\xi+ia,\eta)]/|\rho|^{1-\del}$ is in  $L_w^p([-\pi,\pi]^2)$ with $p=2/(2-\del-\al)$ and norm bounded independent of $|\rho|\le 1$. Thus in bounding $\na G_{{\bf a},\eta}(x)-\na\tilde{G}_{{\bf a}_{\rm hom},\eta}(x)$ we write the integral over $\xi$ as in (\ref{Q3}). The first two terms on the RHS  can be bounded in a straightforward way. To bound the third term we need to estimate the integral
\begin{multline} \label{T3}
\int_{|\xi|>C/[|x|+1]}  \ e^{-i\xi.x} \  (x.\na_\xi) [e(\xi)f(\xi)] \ d\xi \ = \\
\frac{1}{2}\int_{|\xi|>C'/[|x|+1]}  \ e^{-i\xi.x} \  (x.\na_\xi) [e(\xi)f(\xi)-e(\xi+\rho)f(\xi+\rho)] \ d\xi  \ + \ {\rm Error}.
\end{multline}
In (\ref{T3}) the vector $\rho\in\R^d$ is as in (\ref{O3}) and $C'$ is a universal constant. The error term in (\ref{T3})  is bounded by
\be \label{U3}
{\rm Error} \ \le \ \int_{C/10[|x|+1] <|\xi|<10C/[|x|+1]}  \big| (x.\na_\xi) [e(\xi)f(\xi)]\big| \ d\xi \ ,
\ee
which can be appropriately estimated by using the fact that $\na_\xi q(\xi+ia,\eta)$ is in $L_w^p([-\pi,\pi]^2)$ with $p=2/(1-\al)$. 
Similarly to (\ref{S3}) we have for $n=1,2,..$, the bound
\be \label{V3}
\int_{2^{n-1}C'/[|x|+1]<|\xi|<2^nC'/[|x|+1]}\big|\na_\xi [e(\xi)f(\xi)-e(\xi+\rho)f(\xi+\rho)] \big|  \ d\xi   \le \  \frac{C_\del 2^{-(1-\al-\del)n}|\rho|^{1-\del}}{\La[|x|+1]^{\al+\del-1}} \ ,
\ee
where $C_\del$ depends only on $d=2$ and $\La/\la$.  Choosing now $\al<1-\del$ in (\ref{V3}),  we conclude that  $|\na G_{{\bf a},\eta}(x)-\na\tilde{G}_{{\bf a}_{\rm hom},\eta}(x)|$ is bounded by the RHS of (\ref{G3}). As in the previous paragraph,  we can bound  $| \na \tilde{G}_{{\bf a}_{\rm hom},\eta}(x)-\na G_{{\bf a}_{\rm hom},\eta}(x)|$ using the H\"{o}lder continuity (\ref{D3}) of $q(0,\eta), \ 0\le \eta\le \La$,  by the RHS of (\ref{G3}) for any $\al$ smaller than the H\"{o}lder constant in (\ref{D3}). We have therefore proven (\ref{G3}) for $d=2$.

We proceed similarly for the proof of (\ref{H3}) in the case $d=2$. Thus we write
\begin{multline} \label{W3}
\int_{|\xi|>C/[|x|+1]}  \ [e^{-i\xi.x}-e^{-i\xi.x'}] \  (x.\na_\xi) [e(\xi)f(\xi)] \ d\xi \ = \\
\frac{1}{2}\int_{|\xi|>C'/[|x|+1]}  \ [e^{-i\xi.x}-e^{-i\xi.x'}] \  (x.\na_\xi) [e(\xi)f(\xi)-e(\xi+\rho)f(\xi+\rho)] \ d\xi  \ +\\
\frac{1}{2}[1-e^{i\rho.(x'-x)}]\int_{|\xi|>C'/[|x|+1]}  \ e^{-i\xi.x} \  (x.\na_\xi) [e(\xi+\rho)f(\xi+\rho)] \ d\xi  \ +
{\rm Error}.
\end{multline}
where $\rho\in\R^d$ is the vector of minimum norm such that $e^{-i\rho.x'}=-1$.
Arguing as in (\ref{U3}) we see that
\be \label{X3}
| \ {\rm Error} \ | \ \le \  C_1|x-x'|/\La[|x|+1]^\al,
\ee
where $C_1$ depends only on $\La/\la$. From (\ref{V3}) we see that the first term on the RHS of (\ref{W3}) is bounded by the sum 
\be \label{Y3}
|x||x-x'|^{1-\del}\sum_{n=1}^\infty \left(\frac{2^nC'}{[|x|+1]}\right)^{1-\del}\frac{C_{\del'} 2^{-(1-\al-\del')n}|\rho|^{1-\del'}}{\La[|x|+1]^{\al+\del'-1}} \ ,
\ee
for any $\del'$ satisfying $0<\del'\le 1$. Provided $\al<\del$ we may choose $\del'>0$ so that the sum in (\ref{Y3}) converges, whence the sum is bounded by $C_\del|x-x'|^{1-\del}/\La[|x|+1]^{\al-\del}$ for a constant $C_\del$ depending only on $\del>0$ as well as $\La/\la$. The second term on the RHS of (\ref{W3}) is $[1-e^{i\rho.(x'-x)}]$ times an integral similar to the integral on the LHS of (\ref{T3}). Hence the second term is bounded in absolute value by $C_1|x-x'|/\La[|x|+1]^\al$, where $C_1$ depends only on $\La/\la$.  We have therefore shown that  $|\na G_{{\bf a},\eta}(x)-\na\tilde{G}_{{\bf a}_{\rm hom},\eta}(x)|$ is bounded by the RHS of (\ref{H3}). Since we can argue as previously to bound $| \na \tilde{G}_{{\bf a}_{\rm hom},\eta}(x)-\na G_{{\bf a}_{\rm hom},\eta}(x)|$, the proof of (\ref{H3}) is complete. 

   To prove the result for $d\ge 3$ we use multiple integration by parts and the integrability properties of the higher derivatives of $q(\xi+ia,\eta), \ \xi\in[-\pi,\pi]^d$, given in Theorem 3.1.
\end{proof}

\vspace{.2in}
\section{Independent Variable Environment}
Our goal in this section will be to prove Theorem 3.1 in the case when the variables ${\bf a}(\tau_x\cdot), \ x\in\Z^d$, are independent.  Following \cite{cn1} we first consider the case of a Bernoulli environment.  Thus for each $n\in\Z^d$ let $Y_n$ be independent Bernoulli variables, whence $Y_n=\pm 1$ with equal probability. The probability space $(\Om,\mathcal{F},P)$ is then the space generated by all the variables $Y_n, \ n\in\Z^d$.  A point $\om\in\Om$ is a set of configurations $\{(Y_n,n): n\in\Z^d\}$. For $y\in\Z^d$ the translation operator $\tau_y$ acts on $\Om$ by taking the point $\om=\{(Y_n,n): n\in\Z^d\}$ to $\tau_y\om=\{(Y_{n+y},n): n\in\Z^d\}$. The random matrix ${\bf a}(\cdot)$ is then defined by 
\be \label{A4}
{\bf a}(\om) \ = \ (1+\ga Y_0) I_d, \quad \om=\{(Y_n,n): n\in\Z^d\} \ ,
\ee
where $0\le \ga<1$.
In \cite{cn1} we defined for $1\le p<\infty$ Fock spaces $\mathcal{F}^p(\Z^d)$  of complex valued functions, and observed that $\mathcal{F}^2(\Z^d)$ is unitarily equivalent to $L^2(\Om)$. We can similarly define Fock spaces $\mathcal{H}_\mathcal{F}^p(\Z^d)$ of vector valued functions with domain $\C^d$, such that $\mathcal{H}_\mathcal{F}^2(\Z^d)$  is unitarily equivalent to  $\mathcal{H}(\Om)$. Hence we can regard the operator $T_{\xi,\eta}$ of (\ref{G2}) as acting on $\mathcal{H}_\mathcal{F}^2(\Z^d)$, and by unitary equivalence it is a bounded operator satisfying $\|T_{\xi,\eta}\|\le 1$ for $\xi\in\R^d, \ \eta>0$.  We may apply now the Calderon-Zygmund theorem \cite{stein} to conclude the following:
\begin{lem}
For $\xi\in\R^d, \ 0<\eta\le 1$, and $1<p<\infty$, the operator $T_{\xi,\eta}$ is a bounded operator on $\mathcal{H}_\mathcal{F}^p(\Z^d)$ with norm $\|T_{\xi,\eta}\|_p$ satisfying an inequality $\|T_{\xi,\eta}\|_p\le 1+\del(p) $, where $\lim_{p\ra 2}\del(p)=0$.  
\end{lem}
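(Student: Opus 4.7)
The plan is to realize $T_{\xi,\eta}$ as a matrix-valued singular integral operator on each sector of the Fock space $\mathcal{H}_{\mathcal{F}}^p(\Z^d)$, apply the Calder\'on--Zygmund theorem to obtain a crude $L^p$ bound for every $1<p<\infty$, and then interpolate with the $L^2$ bound $\|T_{\xi,\eta}\|_2\le 1$ noted after (\ref{F2}) to get the refined estimate $1+\del(p)$ with $\del(p)\ra 0$ as $p\ra 2$.

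First I would use the Walsh--Fourier (``chaos'') expansion in the Bernoulli variables $Y_n$ to decompose $\mathcal{H}^2_{\mathcal{F}}(\Z^d)$ into sectors indexed by finite subsets $S\subset\Z^d$. Because the translation $\tau_y$ acts on a sector by the spatial shift $S\mapsto S+y$, the convolution formula (\ref{G2}) shows that, up to a unitary rearrangement that singles out one ``active'' coordinate in $S$, the action of $T_{\xi,\eta}$ on each sector coincides with convolution in that coordinate against the matrix-valued kernel
\[
K_{\xi,\eta}(x) \ = \ \{\nabla\nabla^* G_{\eta/\La}(x)\}^* e^{-ix\cdot\xi}, \quad x\in\Z^d,
\]
tensored with the identity on the remaining coordinates. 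This reduces the task to estimating convolution by $K_{\xi,\eta}$ on $\ell^p(\Z^d;\C^{d\times d})$ uniformly in $\xi\in\R^d$ and $\eta\in(0,\La]$.

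Next I would verify that $K_{\xi,\eta}$ is a Calder\'on--Zygmund kernel with constants independent of $(\xi,\eta)$. The size bound $|\nabla\nabla^* G_{\eta/\La}(x)|\le C/(1+|x|)^d$ is standard (and is implicit in (\ref{K2})), and a further discrete gradient of the same object gives the Lipschitz-type estimate $|\nabla[\nabla\nabla^* G_{\eta/\La}](x)|\le C/(1+|x|)^{d+1}$; the $L^2$ bound $\|T_{\xi,\eta}\|_2\le 1$ is available from the variational setup (\ref{F2}). These are exactly the hypotheses of the Calder\'on--Zygmund theorem, and since the action on the remaining Fock coordinates is by the identity it does not affect the $\ell^p$ norm. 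Thus I obtain $\|T_{\xi,\eta}\|_p\le M_p$ for every $p\in(1,\infty)$, with $M_p$ depending only on $d$ and $p$.

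Finally, for some fixed $p_0>2$, the Riesz--Thorin interpolation theorem applied between the pairs $(2,1)$ and $(p_0,M_{p_0})$ yields, for $p\in(2,p_0)$ and the corresponding interpolation parameter $\theta=\theta(p)$,
\[
\|T_{\xi,\eta}\|_p \ \le \ 1^{1-\theta}\,M_{p_0}^{\theta} \ = \ M_{p_0}^{\theta(p)},
\]
and $\theta(p)\ra 0$ as $p\ra 2^+$, so $\del(p):=M_{p_0}^{\theta(p)}-1\ra 0$. The range $p<2$ is handled by duality using the self-adjointness of $T_{\xi,\eta}$ remarked after (\ref{F2}). The main obstacle I foresee is the bookkeeping in the first step: making precise the claim that on each Fock sector $T_{\xi,\eta}$ reduces to a convolution in a single distinguished coordinate tensored with the identity, so that scalar Calder\'on--Zygmund theory applies with constants independent of the sector. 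Once that identification is in place, the kernel estimates and the interpolation step are routine.
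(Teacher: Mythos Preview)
Your approach is correct and matches the paper's: the paper itself gives no proof beyond the sentence ``We may apply now the Calderon--Zygmund theorem [stein] to conclude the following,'' relying implicitly on the Fock-space framework of \cite{cn1}; you are simply filling in the standard details (CZ bound on each sector, then Riesz--Thorin with the $L^2$ bound $\le 1$, then duality). The ``unitary rearrangement'' you flag as the obstacle is just the change of variables $(y_1,\dots,y_n)\mapsto (y_1,\,y_2-y_1,\dots,\,y_n-y_1)$ on the $n$-particle sector: since $\tau_x$ acts as the \emph{diagonal} shift $f(S)\mapsto f(S-x)$, this change of coordinates turns the diagonal convolution into convolution in the first coordinate tensored with the identity, and it preserves the $\ell^p$ norm because it is a bijection of the index set; after that the CZ and interpolation steps go through exactly as you describe.
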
 
It is well known  for the independent variable environment $(\Om,\mathcal{F},P)$ that the operators $\tau_{{\bf e}_j}, \ j=1,..d$, are strong mixing on $\Om$. Hence Proposition 2.1 implies that the function  $q(\xi,\eta)$ with domain $ \xi\in[-\pi,\pi]^d, \ 0<\eta\le\La$, is uniformly continuous. Lemma 4.1 enables us improve this result to uniform H\"{o}lder continuity.   
\begin{proposition}
The function $q(\xi,\eta)$  of (\ref{E2}) with domain $\xi\in[-\pi,\pi]^d, \ 0<\eta\le\La$,  is uniformly H\"{o}lder continuous. That is there exist positive constants $C,\alpha$  with $0<\alpha\le 1$ depending only on $d$ and $\La/\la$, such that $\|q(\xi',\eta')-q(\xi,\eta)\|\le C\La \left[ \ |\xi'-\xi|^\alpha +|(\eta'-\eta)/\La|^{\al/2} \ \right]$ for  $\xi',\xi\in[-\pi,\pi]^d$ and $0<\eta,\eta' \le \La$.
\end{proposition}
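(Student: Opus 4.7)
The plan is to leverage Meyer's theorem (Lemma 4.1) to upgrade the uniform continuity from Proposition 2.1 into uniform H\"older continuity. First, using Lemma 4.1, I would choose $p$ sufficiently close to $2$ so that the constant $r=(1-\la/\La)(1+\del(p))$ is strictly less than $1$. Then the operator $M_{\xi,\eta}=PT_{\xi,\eta}{\bf b}(\cdot)$ is a contraction with ratio at most $r$ on the Fock space $\mathcal{H}_{\cF}^p(\Z^d)$, uniformly in $\xi\in\R^d$ and $\eta\in(0,\La]$. Together with duality and the $L^\infty$ bound $\|{\bf b}(\cdot)\|_\infty\le 1-\la/\La$, this yields $|h_m(\xi,\eta)|\le C\La r^m$, so the series (\ref{N2}) for $q(\xi,\eta)$ converges uniformly with geometric rate, uniformly in $\eta>0$.

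Next, for H\"{o}lder continuity, I would estimate $h_m(\xi',\eta')-h_m(\xi,\eta)$ via a telescoping expansion that inserts the difference $T_{\xi',\eta'}-T_{\xi,\eta}$ at each of the $m$ positions and uses the $\mathcal{H}_{\cF}^p$ contraction bound $r<1$ on the remaining factors. The resulting combinatorial factor of $m$ is dominated by the geometric decay $r^m$ after summation over $m\ge 1$, so the whole series contributes a term of the same order as a single operator difference $\|T_{\xi',\eta'}-T_{\xi,\eta}\|$ in an appropriate norm. To keep the bound uniform as $\eta,\eta'\to 0$, I would work with the $\mathcal{H}_{\cF}^p$ norm throughout, exploiting the fact that at each stage of the telescoping the argument is of the form ${\bf b}(\cdot)$ times an iterate, so multiplication by ${\bf b}$ keeps the function in $L^\infty$ rather than merely $L^p$.

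The main obstacle is obtaining a uniform H\"{o}lder estimate of the form
\be
\|T_{\xi',\eta'}-T_{\xi,\eta}\|_{\mathcal{H}_{\cF}^p\to\mathcal{H}_{\cF}^p} \ \le \ C\left(|\xi'-\xi|^\al+|(\eta'-\eta)/\La|^{\al/2}\right)
\ee
for some $\al>0$. On one hand, Lemma 2.1 and Corollary 2.1 provide a Lipschitz estimate on $T_{\xi,\eta}$ with constant of order $1/\sqrt{\eta/\La}$, which degenerates as $\eta\to 0$; on the other hand, Lemma 4.1 (together with its extension to $\eta=0$ via the strong limit (\ref{L2})) gives a uniform $L^p$ operator bound of order $1+\del(p)$. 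Interpolating between these two estimates, by a dyadic splitting of the parameter range into the regime $|\xi'-\xi|\lesssim\sqrt{\eta/\La}$ where the Lipschitz bound is useful, and the complementary regime where the extra $L^p$ room $\del(p)>0$ in Meyer's theorem is used to trade a fractional amount of regularity, should yield the displayed H\"{o}lder estimate with some exponent $\al\in(0,1)$ depending only on $d$ and $\La/\la$. Inserting this into the telescoping bound and summing over $m$ then gives the claimed H\"{o}lder continuity of $q$ uniformly on $[-\pi,\pi]^d\times(0,\La]$.
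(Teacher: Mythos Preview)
Your telescoping strategy and use of Lemma 4.1 to choose $p$ close to $2$ with $(1-\la/\La)(1+\del(p))<1$ are exactly what the paper does. The gap is in the step you flag as the ``main obstacle'': your proposed interpolation does not produce a H\"older bound on $T_{\xi',\eta'}-T_{\xi,\eta}$ that is uniform as $\eta\to 0$. Combining the Lipschitz bound of order $|\xi'-\xi|/\sqrt{\eta/\La}$ with the uniform bound $2(1+\del(p))$ only yields $\min\bigl(C|\xi'-\xi|/\sqrt{\eta/\La},\,C'\bigr)$, which is still of order $1$ when $|\xi'-\xi|\gtrsim\sqrt{\eta/\La}$. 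The ``extra $L^p$ room'' $\del(p)>0$ is just a small additive constant in the operator norm; it provides no mechanism for trading it against powers of $|\xi'-\xi|$. So the displayed $p\to p$ H\"older estimate is not established.

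The paper avoids this by \emph{not} seeking a $p\to p$ bound on the difference. Instead it proves directly, via the kernel representation (\ref{G2}) and the weak Young inequality, that
\[
\big\|\,P\bigl[T_{\xi',\eta}-T_{\xi,\eta}\bigr]\,\big\|_{\mathcal{H}_\mathcal{F}^p\to\mathcal{H}_\mathcal{F}^2}\ \le\ C\,|\xi'-\xi|^{\al},\qquad p=\frac{2d}{d+2\al},
\]
uniformly in $\eta>0$. The point is that the kernel of the difference is pointwise bounded by $C|\xi'-\xi|^{\al}/(1+|x|)^{d-\al}$ (interpolating $|e^{-ix.\xi'}-e^{-ix.\xi}|$ between $2$ and $|x|\,|\xi'-\xi|$, and using the $\eta$-uniform decay $|\na\na^*G_{\eta/\La}(x)|\le C(1+|x|)^{-d}$); this kernel lies in weak $L^{d/(d-\al)}(\Z^d)$ with norm independent of $\eta$, and weak Young converts that into a $p\to 2$ mapping bound. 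In the telescoping sum one then runs the $j$ factors on the right in $\mathcal{H}_\mathcal{F}^p$ (using Lemma 4.1), passes through the difference from $\mathcal{H}_\mathcal{F}^p$ to $\mathcal{H}_\mathcal{F}^2$, and runs the remaining factors in $\mathcal{H}_\mathcal{F}^2$ where $\|T_{\xi',\eta}\|\le 1$. The analogous $\eta$-variation bound $\|P[T_{\xi,\eta'}-T_{\xi,\eta}]\|_{p,2}\le C|(\eta'-\eta)/\La|^{\al/2}$ follows the same way. Note also that the projection $P$ is essential here: on constants, $T_{\xi,\eta}$ is given by (\ref{R2}) and is \emph{not} uniformly continuous in $(\xi,\eta)$, so any argument that drops $P$ at the difference step will fail.
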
 
\begin{proof}
We use the representation (\ref{M2}), (\ref{N2}) for $q(\xi,\eta)$. From (\ref{M2}) we have that
\begin{multline} \label{B4}
h_m(\xi',\eta)-h_m(\xi,\eta) \ = \\
\sum_{j=0}^{m-1}\langle \ {\bf b}(\cdot)\left[ PT_{\xi',\eta}{\bf b}(\cdot)\right]^{m-1-j} \  P
\left[ T_{\xi',\eta}- T_{\xi,\eta}\right]{\bf b}(\cdot)\left[ PT_{\xi,\eta}{\bf b}(\cdot)\right]^j \ \rangle \ .
\end{multline}
From (\ref{G2}) and the weak Young  inequality we see that for $0<\al\le 1$, the operator $P\left[ T_{\xi',\eta}- T_{\xi,\eta}\right]$  from $\mathcal{H}_\mathcal{F}^p(\Z^d)$ to  $\mathcal{H}_\mathcal{F}^2(\Z^d)$ with $p=2d/(d+2\al)$ is bounded with norm satisfying
\be \label{C4}
\| P\left[T_{\xi',\eta}- T_{\xi,\eta}\right]\|_{p,2} \ \le \  C|\xi'-\xi|^\al
\ee
for a constant $C$ depending only on $d$ if  $d\ge 3$. In the case $d\le 2$ we need to take $\alpha<d/2$, in which case $C$ depends also on $\al$.  For the inequality (\ref{C4}) to hold it is necessary to include the projection $P$ (see remark following Proposition 2.1).

It follows now from Lemma 4.1 and (\ref{B4}), (\ref{C4}) that
\be \label{D4}
\|h_m(\xi',\eta)-h_m(\xi,\eta)\| \ \le \ C|\xi'-\xi|^\al (1-\la/\La)^{m+1}[1+\del(p)]^{m-1} \ ,
\ee
where $p=2d/(d+2\al)$. Note here we are using the fact  that (\ref{A4}) implies that a column vector of ${\bf b}(\cdot)$ is in $\mathcal{H}_\mathcal{F}^p(\Z^d)$ with norm less than $2\ga/(1+\ga)$. The uniform H\"{o}lder continuity of the family of  functions $q(\cdot,\eta), \ 0<\eta\le\La,$ follows from (\ref{D4}) and Lemma 4.1 by taking $p$ sufficiently close to $2$ so that $(1-\la/\La)[1+\del(p)]<1$.

The uniform H\"{o}lder continuity of the family of  functions $q(\xi,\cdot), \ \xi\in[-\pi,\pi]^d,$ can be obtained in a similar way by observing that 
\be \label{CC4}
\| P\left[T_{\xi,\eta'}- T_{\xi,\eta}\right]\|_{p,2} \ \le \  C|(\eta'-\eta)/\La|^{\al/2},
\ee
where $C$ and $p$ are as in (\ref{C4}).
\end{proof}
For $1\le p\le\infty$ let $L^p(\Z^d,\C^d\otimes\C^d)$ be the Banach space of $d\times d$ matrix valued functions $g:\Z^d\ra\C^d\otimes\C^d$ with norm $\|g\|_p$ defined by
\be \label{E4}
\|g\|_p^p  =  \sup_{v\in\C^d:|v|=1}\sum_{x\in\Z^d} |g(x)v|^p \ {\rm if  \ }p<\infty, \qquad \|g\|_\infty= \sup_{v\in\C^d:|v|=1}\left[ \ \sup_{x\in\Z^d} |g(x)v| \ \right] \ ,
\ee
where $|g(x)v|$ is the Euclidean norm of the vector $g(x)v\in\C^d$. We similarly define spaces  $L^p([-\pi,\pi]^d\times\Om,\C^d\otimes\C^d)$ of $d\times d$ matrix valued functions $g:[-\pi,\pi]^d\times\Om\ra \C^d\otimes\C^d$ with norm $\|g\|_p$ defined by
\begin{multline} \label{F4}
\|g\|_p^p=  \sup_{v\in\C^d:|v|=1}\frac{1}{(2\pi)^d} \int_{[-\pi,\pi]^d} \langle \ |g(\xi,\cdot)v|^2 \ \rangle ^{p/2}  d\xi \quad {\rm if \ }p<\infty, \\
\quad \|g\|_\infty= \sup_{v\in\C^d:|v|=1}\left[ \ \sup_{\xi\in[-\pi,\pi]^d}  \langle \ |g(\xi,\cdot)v|^2 \ \rangle ^{1/2} \ \right]  \ .
\end{multline}
For $\eta>0,m=1,2,..,$  we define an operator $T_{m,\eta}$  from functions $g:\Z^d\ra\C^d\otimes \C^d$ to periodic functions $T_{m,\eta}g:[-\pi,\pi]^d\times\Om\ra \C^d\otimes\C^d$ by
\be \label{G4}
T_{m,\eta}g(\xi,\cdot) \ = \   \sum_{x\in\Z^d} g(x)e^{-ix.\xi}\tau_xP{\bf b}(\cdot)\left[ PT_{\xi,\eta}{\bf b}(\cdot)\right]^{m-1} \ .
\ee
 We shall be interested in showing that  for certain values of $p,q$ the operator $T_{m,\eta}$ is bounded from   $L^p(\Z^d,\C^d\otimes\C^d)$  to  $L^q([-\pi,\pi]^d\times\Om,\C^d\otimes\C^d)$, uniformly in $\eta>0$. In \cite{cn2} this was already shown for $p=1,q=\infty$ and $p=q=2$, with the corresponding operator norms $\|T_{m,\eta}\|_{p,q}$ satisfying the inequalities
\be \label{H4}
\|T_{m,\eta}\|_{1,\infty} \  \le \  (1-\la/\La)^m, \quad \|T_{m,\eta}\|_{2,2} \ \le \  \sqrt{d} \ (m+1)(1-\la/\La)^m \ .
\ee
Observe now from the proof of Proposition 4.1 that for the independent random variable environment corresponding to (\ref{A4}) we can improve upon (\ref{H4}). Thus there exists $p_0(\La/\la)$ with  $1<p_0(\La/\la)<2$ depending only on $d$ and $\La/\la$, such that
\be \label{I4}
\|T_{m,\eta}\|_{p,\infty} \  \le \  (1-\la/\La)^{m/2}, \quad {\rm for \ } 1\le p\le p_0(\La/\la) \ .
\ee
It follows now from (\ref{H4}), (\ref{I4}) and the Riesz convexity theorem \cite{sw} that
\begin{multline} \label{J4}
\|T_{m,\eta}\|_{p,q} \  \le \  \sqrt{d} \ (m+1)(1-\la/\La)^{m/2},  \quad {\rm for \ } 1\le p\le 2, \\
{\rm and \ } 1\ge \frac{1}{p}+\frac{1}{q} \ge 1-
\left[1-\frac{1}{p_0(\La/\la)}\right]\left[1-\frac{2}{q}\right] \ .
\end{multline}
We can use (\ref{J4}) to obtain an improvement on Proposition 2.2 in the case $|m|=1$. 
\begin{lem}
Suppose $d\ge 2$ and $(\Om,\mathcal{F},P)$ in Proposition 2.2.  is the Bernoulli environment corresponding to (\ref{A4}) Then in the case $|m|=1$ the derivative (\ref{X2})  is in the space $L^p([-\pi,\pi]^d)$ with $p=[d+\del(\La/\la)]/|m|$ and its norm is bounded by $\La C(\La/\la)$, for positive constants  $\del(\La/\la)$ and $C(\La/\la)$ depending only on $d$ and $\La/\la\ge 1$.  
\end{lem}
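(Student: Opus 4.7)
The plan is to combine the perturbation series (\ref{N2}) for $q$ with Meyer's theorem (Lemma 4.1) and the Riesz convexity argument that produced (\ref{J4}). The improvement from weak $L^d$ in Proposition 2.2 to strong $L^{d+\del}$ here has two sources: the extra power of $|x|$ that appears in the kernel of $\partial_{\xi_j}T_{\xi,\eta}$, and the strict geometric gain $(1-\la/\La)^{m/2}$ afforded by Meyer's theorem in the Bernoulli environment.

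First I would differentiate the Neumann series term by term,
\begin{equation*}
\partial_{\xi_j} q(\xi,\eta) \ = \ -\La \sum_{m=1}^\infty \partial_{\xi_j} h_m(\xi,\eta),
\end{equation*}
and note that each $\partial_{\xi_j} h_m$ is a sum of $m$ pieces, in each of which one factor $PT_{\xi,\eta}$ appearing in (\ref{M2}) has been replaced by $P\partial_{\xi_j}T_{\xi,\eta}$. By (\ref{G2}) the kernel of $\partial_{\xi_j}T_{\xi,\eta}$ is $(-ix_j)\{\nabla\nabla^* G_{\eta/\La}(x)\}^* e^{-ix.\xi}$, which decays at rate $|x|^{-(d-1)}$ rather than $|x|^{-d}$ and therefore belongs to $\ell^p(\Z^d)$ for every $p>d/(d-1)$.

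Next I would modify the operator $T_{m,\eta}$ of (\ref{G4}) by inserting, at the designated position, the differentiated factor carrying the $x_j$ weight; call the resulting operator $T_{m,\eta;j}$. The Fourier coefficients of $\partial_{\xi_j} h_m(\cdot,\eta)$ are recovered by applying $T_{m,\eta;j}$ to a simple configuration-space input, pairing with ${\bf b}(\cdot)^*$ under expectation in $\om$, and summing $m$ such terms. The operator norm $\|T_{m,\eta;j}\|_{p,q}$ from $L^p(\Z^d,\C^d\otimes\C^d)$ to $L^q([-\pi,\pi]^d\times\Om,\C^d\otimes\C^d)$ is then bounded by Riesz convexity between three endpoints, mirroring the derivation of (\ref{J4}): an $L^{p_1}\to L^\infty$ endpoint, strengthened from the $L^1\to L^\infty$ endpoint of (\ref{H4}) by the improved $\ell^{p_1}$-integrability of the differentiated kernel for some $p_1>1$; the standard $L^2\to L^2$ endpoint of size $\sqrt{d}(m+1)(1-\la/\La)^m$; and Meyer's $L^{p_0}\to L^{p_0}$ endpoint from Lemma 4.1 delivering the geometric gain $(1-\la/\La)^{m/2}$. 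Interpolation yields a pair $(p,q)$ with $q>d$ and $\|T_{m,\eta;j}\|_{p,q}\le C(\La/\la)(m+1)(1-\la/\La)^{m/2}$. Invoking the Hausdorff-Young inequality in $\xi$ (which applies since $q\ge 2$ for $d\ge 2$), this transforms into an $L^{d+\del}([-\pi,\pi]^d)$ bound on $\partial_{\xi_j} h_m$ uniform in $\eta$, and summing in $m$ against $(1-\la/\La)^{m/2}$ gives the claimed bound on $\partial_{\xi_j} q$ with constants depending only on $d$ and $\La/\la$.

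The main obstacle is the careful tracking of exponents in the Riesz convexity step: one must verify that the enlarged admissible region in the $(p,q)$-plane, obtained by replacing the crude $L^1\to L^\infty$ endpoint of (\ref{H4}) with the $L^{p_1}\to L^\infty$ endpoint coming from the differentiated kernel, actually contains a pair with $q>d$. Both strict inequalities $p_1>1$ (from the $|x|^{-(d-1)}$ decay) and $\del(p_0)>0$ (from Meyer's theorem in the Bernoulli setting) are essential, and their interplay is what produces the strict excess $\del(\La/\la)>0$ in the final Fourier-space integrability exponent.
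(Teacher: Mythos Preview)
Your approach has a genuine gap that is most acute when $d=2$.  The key issue is your treatment of the differentiated factor.  You place the entire kernel $(-ix_j)\{\nabla\nabla^* G_{\eta/\La}(x)\}^*$ on one side; this decays like $|x|^{-(d-1)}$ and hence lies in $\ell^p(\Z^d)$ only for $p>d/(d-1)$.  For $d=2$ this forces $p>2$, which is outside the range $1\le p\le 2$ required in (\ref{J4}) (and in (\ref{I4})).  Your claimed ``strengthened $L^{p_1}\to L^\infty$ endpoint'' does not exist: the differentiated kernel is \emph{less} summable than the original one, not more, and the $L^1\to L^\infty$ bound in (\ref{H4}) comes from the uniform operator bound $\|T_{\xi,\eta}\|_{\mathcal{H}(\Om)}\le 1$, which fails uniformly in $\eta$ once you replace $T_{\xi,\eta}$ by $\partial_{\xi_j}T_{\xi,\eta}$ (the latter has norm of order $(\eta/\La)^{-1/2}$).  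So none of the three interpolation endpoints you propose is actually available for your modified operator $T_{m,\eta;j}$ uniformly in $\eta$.

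The paper avoids this by a bilinear splitting that you are missing.  Instead of letting one factor carry the full $|x|^{-(d-1)}$ kernel, one writes the $\xi_j$-derivative of the symbol $e(-\zeta)e(-\zeta)^*/[\eta/\La+|e(-\zeta)|^2]$ as a finite sum of products $\hat g_{j,r}(\zeta)^*\hat h_{j,r}(\zeta)$, with each factor carrying only ``half'' of the singularity.  Concretely one obtains
\[
\partial_{\xi_j} h_m(\xi,\eta)\;=\;\sum_{k=1}^m\sum_{r}\big\langle\,[T_{m+1-k,\eta}g_{j,r}(\xi,\cdot)]^*\,T_{k,\eta}h_{j,r}(\xi,\cdot)\,\big\rangle,
\]
where now $|g_{j,r}(x)|+|h_{j,r}(x)|\le C(|x|+1)^{-(d-1/2)}$.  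These inputs lie in $\ell^p(\Z^d)$ for every $p>d/(d-\tfrac12)=2d/(2d-1)$, which is strictly below $2$ for all $d\ge 2$.  One can therefore feed them directly into the already-established bound (\ref{J4}) and, after Cauchy--Schwarz in $\omega$ and H\"older in $\xi$, land in $L^{d+\del}([-\pi,\pi]^d)$.  No new endpoint and no Hausdorff--Young step is needed; the gain over Proposition~2.2 comes entirely from the room created in (\ref{J4}) by Meyer's theorem.  Your argument would go through for $d\ge 3$ once reformulated in this bilinear language, but as written it breaks down precisely at the borderline case $d=2$ that the lemma is meant to cover.
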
 
\begin{proof}
Observe from (\ref{AC2}) and (\ref{M2})   that 
\be \label{K4}
\left( \frac{\pa}{\pa \xi_j} \right)h_m(\xi,\eta)= \sum_{k=1}^m\sum_{r=1}^4 \langle \  \left[T_{m+1-k,\eta}g_{j,r}(\xi,\cdot)\right]^* \  T_{k,\eta}h_{j,r}(\xi,\cdot) \ \rangle \ 
\ee
for  certain $d\times d$ matrix valued functions $g_{j,r}(x), h_{j,r}(x), \ x\in\Z^d$. The functions $g_{j,r}(\cdot), h_{j,r}(\cdot)$ are determined from their Fourier transforms (\ref{Z2})   by the  formula
\be \label{L4}
\sum_{r=1}^4\hat{g}_{j,r}(\zeta)^*\hat{h}_{j,r}(\zeta)  \ = \  -\frac{\pa}{\pa \zeta_j}\left[ \frac{e(-\zeta)e(-\zeta)^*}{\eta/\La+e(-\zeta)^*e(-\zeta)}\right] \ ,
\ee
which follows from (\ref{AC2}).
Evidently one can choose the $g_{j,r}(\cdot), h_{j,r}(\cdot)$ satisfying (\ref{L4}) so that  they also satisfy the inequality
\be \label{M4}
\|g_{j,r}(x)\|+\| h_{j,r}(x)\| \ \le \  \frac{C\exp[-\ga |x| \sqrt{\eta/\La}]}{( |x|+1)^{d-1/2}}, \quad x\in\Z^d, \ 0<\eta\le \La,
\ee
for positive constants $C,\ga$ depending only on $d\ge 1$.  Hence we may estimate the RHS of (\ref{K4}) by using (\ref{J4}) for any $p>d/(d-1/2)$. Since we require $p\le 2$ in (\ref{J4}) it is only possible to do this when $d\ge 2$. The result follows. 
\end{proof}
As in \cite{cn2} we need to obtain norm estimates analogous to (\ref{J4}) on multilinear versions of the operator (\ref{G4}) in order to prove estimates on the derivatives (\ref{X2}) for $|m|>1$. For $\eta>0, \ k\ge 1$ and $m_1,m_2,..,m_k=1,2,..,$  we define a multlinear operator $T_{m_1,m_2,..m_k,\eta}$  from a sequence $[g_1,g_2,..,g_k]$ of $k$ functions $g_j:\Z^d\ra\C^d\otimes \C^d, \ j=1,..,k$, to periodic functions $T_{m_1,m_2,..m_k,\eta}[g_1,g_2,..g_k]:[-\pi,\pi]^d\times\Om\ra \C^d\otimes\C^d$ by
\be \label{N4}
T_{m_1,..,m_k,\eta}[g_1,g_2,...,g_k](\xi,\cdot) \ = \   \sum_{x_1,..x_k \in\Z^d} \prod_{j=1}^k g_j(x_j)e^{-ix_j.\xi}\tau_{x_j}P{\bf b}(\cdot)\left[ PT_{\xi,\eta}{\bf b}(\cdot)\right]^{m_j-1} \ .
\ee
For $p$ satisfying $1\le p\le \infty$  let $p'$ be the conjugate of $p$, so $1/p+1/p'=1$. In \cite{cn2} the following generalization of (\ref{H4}) was obtained:
\begin{lem}
Suppose $2\le q\le\infty$ and $p_1,...,p_k$ with $1\le p_1,...,p_k\le 2$ satisfy the identity 
\be \label{O4}
\frac{1}{p_1'} +\frac{1}{p_2'}+\cdots +\frac{1}{p_k'} \ = \ \frac{1}{q} \ .
\ee
If for $j=1,..,k$, the function $g_j\in L^{p_j}(\Z^d,\C^d\otimes\C^d)$, then  $T_{m_1,m_2,..m_k,\eta}[g_1,g_2,..g_k]$ is in $L^q([-\pi,\pi]^d\times\Om,\C^d\otimes\C^d )$ and
\be \label{P4}
\| \ T_{m_1,m_2,..m_k,\eta}[g_1,g_2,..g_k] \ \|_q \  \le \  d^{k/2}(m+1)\left(1-\la/\La\right)^m \prod_{j=1}^k \|g_j\|_{p_j} \ ,
\ee
where $m=m_1+\cdots +m_k$. 
\end{lem}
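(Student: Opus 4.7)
My plan is to derive (\ref{P4}) by establishing sharp endpoint estimates for the extreme admissible configurations of $(p_1,\dots,p_k,q)$ compatible with (\ref{O4}), and then filling in the intermediate range by multilinear Riesz--Thorin interpolation.

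The basic observation is that the multilinear operator (\ref{N4}) factors as an ordered matrix product of the single-operator outputs,
\begin{equation*}
T_{m_1,\dots,m_k,\eta}[g_1,\dots,g_k](\xi,\cdot) \;=\; \prod_{j=1}^k T_{m_j,\eta}g_j(\xi,\cdot),
\end{equation*}
with each factor defined by (\ref{G4}) and the product taken in the prescribed order $j=1,\dots,k$ as a pointwise product of $d\times d$ matrix-valued functions on $\Om$. This reduces (\ref{P4}) to understanding how the mixed norms (\ref{E4}), (\ref{F4}) behave under non-commutative matrix multiplication of $\Om$-valued factors.

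The admissible set of exponents $(1/p_1',\dots,1/p_k')$ with $1/p_j'\in[0,1/2]$ and $\sum_j 1/p_j'=1/q\in[0,1/2]$ is a simplex whose vertices are the $k$-tuple $(0,\dots,0)$ and the $k$-tuples $(0,\dots,0,1/2,0,\dots,0)$ with the $1/2$ in a single slot. First I would handle the vertex $(0,\dots,0)$, corresponding to $p_1=\cdots=p_k=1$ and $q=\infty$: each factor $\tau_{x_j}P{\bf b}(\cdot)[PT_{\xi,\eta}{\bf b}(\cdot)]^{m_j-1}$ is bounded pointwise in $\om$ and $\xi$ using $\|P{\bf b}(\om)\|\le 1-\la/\La$ together with the operator bound $\|T_{\xi,\eta}\|\le 1$, and absolute summability over $x_j$ produces the bound $(1-\la/\La)^m\prod_j\|g_j\|_1$, which is already sharper than (\ref{P4}) at this vertex. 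Next I would treat the vertices with a single index $j_0$ at $p_{j_0}=2$ and the remaining $p_j=1$, so that $q=2$. Pulling the $L^1$ factors outside as pointwise matrix bounds and reducing to a single-factor estimate in the $j_0$-th variable, I would invoke the $L^2\to L^2$ bound $\|T_{m_{j_0},\eta}\|_{2,2}\le\sqrt{d}\,(m_{j_0}+1)(1-\la/\La)^{m_{j_0}}$ from (\ref{H4}) to obtain a bound by $\sqrt{d}\,(m+1)(1-\la/\La)^m\prod_j\|g_j\|_{p_j}$; each interchange between pointwise $\om$-control and the Hilbert space norm on $\mathcal{H}(\Om)$ in the other factors contributes a factor $\sqrt{d}$, and accounting for all $k$ factors gives the claimed $d^{k/2}$.

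Multilinear Riesz--Thorin interpolation (cf.\ \cite{sw}) between these vertex bounds then yields (\ref{P4}) throughout the simplex, with constant $d^{k/2}(m+1)(1-\la/\La)^m$, uniformly in $\eta>0$ and $\xi\in[-\pi,\pi]^d$. The main obstacle is the careful bookkeeping required to propagate the non-commutative matrix product through the mixed $L^q$-in-$\xi$ / $L^2$-in-$\Om$ norm (\ref{F4}); in particular, one must verify that the vector-valued version of Riesz--Thorin applies to the tensor-product norms at hand, and that the Hilbert--Schmidt versus operator-norm conversions needed at each matrix multiplication step accumulate exactly to $d^{k/2}$ rather than to something worse, and preserve the single factor $(m+1)$ rather than producing a separate $(m_j+1)$ for each $j$.
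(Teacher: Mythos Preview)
Your overall architecture---establish the estimate at the vertices of the exponent simplex and then fill in by multilinear Riesz--Thorin interpolation---is the right one, and it is the method used in \cite{cn2} (the paper itself does not reprove Lemma~4.3 but cites that reference; the same interpolation scheme reappears in the proof of Lemma~4.4). The algebraic factorization $T_{m_1,\dots,m_k,\eta}[g_1,\dots,g_k](\xi,\cdot)=\prod_j T_{m_j,\eta}g_j(\xi,\cdot)$ is also correct as an identity of $\C^{d\times d}$-valued random variables.

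The gap is in your justification of the vertex estimates. You assert that ``each factor $\tau_{x_j}P{\bf b}(\cdot)[PT_{\xi,\eta}{\bf b}(\cdot)]^{m_j-1}$ is bounded pointwise in $\om$ and $\xi$'' and then multiply these pointwise bounds. This is false: $T_{\xi,\eta}$ is a bounded operator on $\mathcal{H}(\Om)=L^2(\Om,\C^d)$, not a pointwise-in-$\om$ map, so $[PT_{\xi,\eta}{\bf b}(\cdot)]^{m_j-1}(\om)$ admits no pointwise bound by $(1-\la/\La)^{m_j-1}$. Consequently your factorization into a product of random matrices, while algebraically valid, gives you nothing at the level of the $L^2(\Om)$ norm: for random matrices $A(\om),B(\om)$ one cannot control $\langle|A(\om)B(\om)v|^2\rangle^{1/2}$ by the product of the individual $L^2(\Om)$ norms. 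The same objection applies to your ``pull the $L^1$ factors outside as pointwise matrix bounds'' step at the mixed vertices.

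The correct way to obtain the $(1,\dots,1)\to\infty$ endpoint is to read the expression (\ref{N4}) not as a product of random matrices but as a single vector in $\mathcal{H}(\Om)$ obtained by successively applying the \emph{operators} $P{\bf b}(\cdot)$ (multiplication, norm $\le 1-\la/\La$), $PT_{\xi,\eta}$ (norm $\le 1$), $\tau_{x_j}$ (unitary), and left multiplication by the constant matrix $g_j(x_j)$, starting from the constant function $v$. Composing operator norms on $\mathcal{H}(\Om)$ and summing over $x_1,\dots,x_k$ then yields the bound $(1-\la/\La)^m\prod_j\|g_j\|_1$ up to the matrix-norm bookkeeping factor $d^{k/2}$. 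A parallel argument, using the $(2,2)$ bound of (\ref{H4}) for the single distinguished factor and operator composition for the remaining $L^1$ factors, handles the other vertices; the factor $(m+1)$ enters only through that one application of (\ref{H4}). Once the endpoint bounds are established this way, your interpolation step goes through as written.
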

 For the independent random variable environment corresponding to (\ref{A4}) one can obtain an improvement of Lemma 4.3 analogous to the improvement  (\ref{I4}) over (\ref{H4}).
 \begin{lem}
 Suppose $(\Om,\mathcal{F},P)$ is the independent random variable environment corresponding to (\ref{A4}).  Then there exists $p_0(\La/\la)$ with $1<p_0(\La/\la)\le 2$ depending only on $\La/\la$ and $d$  such that if $2\le q\le  \infty$ and 
 \be \label{Q4}
\frac{1}{q} \ \le \  \frac{1}{p_1'} +\frac{1}{p_2'}+\cdots +\frac{1}{p_k'} \ \le \ \frac{1}{q} + \left[1-\frac{1}{p_0(\La/\la)}\right]\left[1-\frac{2}{q}\right]\ ,
 \ee
 the function $T_{m_1,m_2,..m_k,\eta}[g_1,g_2,..g_k]$ is in $L^q([-\pi,\pi]^d\times\Om,\C^d\otimes\C^d)$ and
\be \label{R4}
\| \ T_{m_1,m_2,..m_k,\eta}[g_1,g_2,..g_k] \ \|_q \  \le \ d^{k/2}(m+1) \left(1-\la/\La\right)^{m/2} \prod_{j=1}^k \|g_j\|_{p_j} \ .
\ee 
 \end{lem}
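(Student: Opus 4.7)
The plan is to follow the paradigm used to derive (\ref{J4}) from (\ref{H4}) and (\ref{I4}), now lifted to the multilinear setting of Lemma 4.3. The improvement over Lemma 4.3, a better decay rate $(1-\la/\La)^{m/2}$ in place of $(1-\la/\La)^m$, paid for by enlarging the admissible region from the hyperplane (\ref{O4}) to the slab (\ref{Q4}), will come from the Calderon-Zygmund bound of Lemma 4.1, exactly as (\ref{I4}) improves on the $k=1$ case of (\ref{H4}).

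First I would establish (\ref{R4}) at the extreme endpoint of (\ref{Q4}), namely $q=\infty$ with equality $\sum_{j=1}^k 1/p_j' = 1-1/p_0(\La/\la)$. Unraveling (\ref{N4}) and transferring to the Fock space ${\mathcal{H}}_{\mathcal{F}}^2(\Z^d)$ via the unitary equivalence of Section 4, the product over $j$ is interpreted as an iterated sequence consisting of: multiplication by ${\bf b}(\cdot)$ (uniform bound $1-\la/\La$), action of $PT_{\xi,\eta}$ (bounded on ${\mathcal{H}}_{\mathcal{F}}^p(\Z^d)$ with norm at most $1+\del(p)$ by Lemma 4.1), and modulated translation by $g_j$. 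Young's inequality, applied to the convolution structure of these translations in the Fock representation, closes the estimate with product norms $\prod_j \|g_j\|_{p_j}$ provided $r'\le p_0(\La/\la)$, where $1/r=\sum 1/p_j'$. Choosing $p$ close enough to $2$ so that $(1-\la/\La)(1+\del(p)) \le (1-\la/\La)^{1/2}$ consolidates the factors $(1-\la/\La)^{m_j}(1+\del(p))^{m_j-1}$ from all $k$ blocks into the desired $(1-\la/\La)^{m/2}$.

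Next I would combine this endpoint with Lemma 4.3, which corresponds to the opposite extreme of (\ref{Q4}), the hyperplane $\sum 1/p_j'=1/q$, via multilinear Riesz-Thorin interpolation in the $k+1$ variables $(1/p_1',\ldots,1/p_k';1/q)$. Since both endpoints share the polynomial prefactor $d^{k/2}(m+1)$ and admit the decay $(1-\la/\La)^{m/2}$ (using $(1-\la/\La)^m \le (1-\la/\La)^{m/2}$ at the Lemma 4.3 endpoint), interpolation covers the entire slab (\ref{Q4}) with exactly the bound (\ref{R4}).

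The main obstacle is the first step. One must carefully bookkeep, in the Fock representation, the multi-particle structure of the product $\prod_j \tau_{x_j}P{\bf b}(\cdot)[PT_{\xi,\eta}{\bf b}(\cdot)]^{m_j-1}$ and match it against Young's inequality with exponents $(p_1,\ldots,p_k,r')$. The projections $P$ (whose importance was noted in the remark following Proposition 2.1) are essential to circumvent the obstructions to boundedness at $\xi=0$. Moreover, absorbing $(1+\del(p))^{m_j-1}$ into $(1-\la/\La)^{m_j/2}$ \emph{uniformly} for every $m_j\ge 1$ and every $k$ forces a single choice of $p$ near $2$; this choice is precisely what determines the value of $p_0(\La/\la)$ appearing in (\ref{Q4}).
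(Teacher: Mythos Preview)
Your overall strategy—establish the $q=\infty$ endpoint directly via Fock-space estimates and then interpolate with Lemma 4.3—is sound, and the multilinear interpolation does fill out the slab (\ref{Q4}) once both endpoints are in hand. However, your route differs from the paper's, which proceeds by \emph{induction on $k$}: for $k=2$ one observes that (\ref{R4}) holds when $p_2=1$ (the $\ell^1$ norm lets one pull $g_2$ through by the triangle inequality and reduce to the already-proven $k=1$ case (\ref{J4})), and separately when $q=2$ (Lemma 4.3); a single Riesz interpolation then gives the $k=2$ slab. The step from $k-1$ to $k$ is identical, using the $p_k=1$ endpoint and Lemma 4.3 at $q=2$.

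There is a genuine imprecision in your $q=\infty$ argument that the inductive approach avoids. In (\ref{N4}) the convolutions $\sum_x g_j(x)e^{-ix\cdot\xi}\tau_x$ are \emph{interspersed} with the blocks $P{\bf b}(\cdot)[PT_{\xi,\eta}{\bf b}(\cdot)]^{m_j-1}$; they cannot be merged into a single convolution governed by the exponent $r$ with $1/r=\sum 1/p_j'$. What Young actually gives is that each $C_{g_j}$ shifts the running Fock exponent from $\mathcal{H}^{s_j}_{\mathcal F}$ to $\mathcal{H}^{s_{j-1}}_{\mathcal F}$ with $1/s_{j-1}=1/s_j-1/p_j'$, starting from some $s_k$ and terminating at $s_0=2$; hence $1/s_k=1/2+\sum 1/p_j'$. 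Lemma 4.1 must therefore supply the bound $(1-\la/\La)(1+\del(s))\le(1-\la/\La)^{1/2}$ for \emph{every} intermediate exponent $s\in[s_k,2]$, not just a single $p$. This is achievable by taking $p_0(\La/\la)$ sufficiently close to $1$ so that the whole interval $[s_k,2]$ lies in the Calderon--Zygmund window, but your write-up (``choosing $p$ close enough to $2$'') does not reflect this. The paper's $p_k=1$ reduction sidesteps the issue entirely, since when $g_k\in\ell^1$ the convolution does not shift the Fock exponent at all, and one lands directly on the $(k-1)$-linear estimate. That is the simpler bookkeeping, and it only needs ordinary (not multilinear) Riesz interpolation at each step.
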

 \begin{proof}
 We have already proved the lemma for $k=1$ so we consider the case $k=2$. Observe that  (\ref{R4}) holds if $p_2=1$  in a similar way to the proof of  (\ref{J4}). Evidently Lemma 4.3 implies that (\ref{R4}) also holds if  $q=2$. Hence by an application of the  Riesz convexity theorem we conclude that the result holds for the case $k=2$. To prove the result for $k=3$ we proceed similarly, using the fact that we have proved it for $k=2$, and Lemma 4.3 with $k=3$ and $q=2$.
 \end{proof}
 \begin{proof}[Proof of Theorem 3.1] 
 We argue just as in \cite{cn2} to show that for $a=0$, Lemma 4.4 implies the derivative (\ref{X2}) is in the space $L^p([-\pi,\pi]^d)$ with $p=d/(|m|-\al)$, and hence in $L^p_w([-\pi,\pi]^d)$. A similar argument holds for the difference (\ref{Y2}).  Since one can easily show that the proofs of Proposition 4.1 and Lemma 4.4 continue to hold for $|a|\le C_1\sqrt{\la\eta/\La^2}$, we have proven Theorem 3.1 for the environment corresponding to (\ref{A4}). It is shown in \cite{cn1} how to extend the argument for the Bernoulli environment corresponding to (\ref{A4}) to general i.i.d. environments  ${\bf a}(\tau_x\cdot), \ x\in\Z^d$. We have therefore proven Theorem 3.1 for ${\bf a}(\tau_x\cdot), \ x\in\Z^d$, i.i.d. such that  (\ref{A1}) holds.
 \end{proof}
\vspace{.2in}
\section{Massive Field Theory Environment}
In this section we shall show that Theorem 3.1 holds if $(\Om,\mathcal{F},P)$ is given by the massive field theory environment determined by (\ref{L1}). The main tool we use to prove the theorem is the Brascamp-Lieb (BL) inequality \cite{bl}. This is perhaps natural to expect since the BL inequality is needed to prove that the operators $\tau_{{\bf e}_j}, \ 1\le j\le d$, on $\Om$ are strong mixing, which by Proposition 2.1 implies the continuity of the function $q(\xi,\eta)$ in the region $\xi\in\R^d, \ \eta\ge 0$.   

We recall the main features of the construction of the measure (\ref{L1}). 
Let $L$ be a positive even integer and $Q=Q_L\subset \Z^d$ be the integer lattice points in the cube centered at the origin with side of length $L$. By a periodic function $\phi:Q\ra\R$ we mean a function $\phi$ on $Q$ with the property that $\phi(x)=\phi(y)$ for all $x,y\in Q$ such that $x-y=L{\bf e}_k$ for some $k, \ 1\le k\le d$. Let $\Om_Q$ be the space of all periodic functions $\phi:Q\ra\R$, whence $\Om_Q$ with $Q=Q_L$ can be identified with $\R^{N}$ where $N=L^d$. Let $\mathcal{F}_Q$ be the Borel algebra for $\Om_Q$ which is generated by the open sets of $\R^{N}$. For $m>0$, we define a probability measure $P_{Q}$ on $(\Om_Q,\mathcal{F}_Q)$ as follows:
\begin{multline} \label{A5}
<F(\cdot)>_{\Om_Q}= \\
 \int_{\R^N} F(\phi(\cdot))\exp\left[-\sum_{x\in Q} \left\{V(\nabla \phi(x))+\frac{1}{2}m^2\phi(x)^2\right\}\right]  \prod_{x\in Q} d\phi(x)/{\rm normalization} \ ,
 \end{multline}
 where $F:\R^N\ra\R$ is a continuous function such that $|F(z)|\le C\exp[A|z|], \ z\in\R^N$, for some constants $C,A$.  Differentiating the probability density in (\ref{A5}) we see that for any $f\in\Om_Q$
 \be \label{B5}
  \langle \   \left[\big(\na f(\cdot),V'(\na\phi(\cdot))\big)+m^2 \big(f(\cdot),\phi(\cdot)\big)\right]  \  \rangle_{\Om_Q}  \ = \ 0,
\ee
where $(\cdot,\cdot)$ denotes the Euclidean inner product on $L^2(Q)$. 
Hence by translation invariance of the measure (\ref{A5}) we conclude that  $\langle(f,\phi)\rangle_{\Om_Q}= 0$ for all $f(\cdot)$. The BL inequality  \cite{bl}  applied to (\ref{A5}) and function $F(\phi(\cdot))=\exp[(f,\phi)]$  then yields the inequality 
 \be \label{C5}
 \langle  \exp[(f,\phi)]\rangle_{\Om_Q} \ \  \le  \ \  \exp\left[\frac{1}{2} (f, \{-\la\Del+m^2\}^{-1} f)\right] \ .
 \ee
 The probability space $(\Om,\mathcal{F},P)$ on  fields $\phi:\Z^d\ra\R$ is obtained as the limit of the spaces $(\Om_Q,\mathcal{F}_Q,P_{Q})$ as $|Q|\ra\infty$. In particular one has from Lemma 2.2 of  \cite{c1} the following result:
\begin{proposition} Assume $m>0$ and let $F:\R^{k}\ra\R$ be a $C^1$ function which satisfies the inequality
\be \label{D5}
            |DF(z)|\le A\exp[ \ B|z| \ ], \quad z\in\R^{k},
 \ee
 for some constants $A,B$. Then for any $x_1,....x_k\in\Z^d$, the limit
 \be \label{E5}
 \lim_{|Q|\ra\infty} \langle F\left(\phi(x_1),\phi(x_2),.....,\phi(x_k)\right)\rangle_{\Om_Q}= \langle F\left(\phi(x_1),\phi(x_2),.....,\phi(x_k)\right)\rangle
\ee
exists and is finite.
\end{proposition}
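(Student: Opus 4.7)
The plan is to combine three ingredients: the Brascamp--Lieb inequality (\ref{C5}) applied uniformly in $Q$, the integrated form of the gradient bound on $F$, and the exponential decay of correlations coming from the positive mass $m$.

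First I would apply (\ref{C5}) with test function $f = \sum_{i=1}^{k} s_i \delta_{x_i}$ for $s\in\R^{k}$. The exponent $(f, \{-\la\Delta_Q + m^{2}\}^{-1} f)$ reduces to $\sum_{i,j} s_i s_j G_m^Q(x_i, x_j)$, where $G_m^Q$ is the periodic massive Green's function; this kernel is bounded uniformly in $Q$ because it is dominated by the infinite-volume Green's function, which decays exponentially at rate of order $m/\sqrt{\la}$. One obtains
\[
\sup_Q \big\langle \exp\big(\textstyle\sum_i s_i \phi(x_i)\big) \big\rangle_{\Om_Q} \le \exp(C|s|^{2})
\]
with $C$ independent of $Q$, and hence, by summing over the $2^{k}$ choices of signs, uniform exponential moments of $|\phi(x_1)|+\cdots+|\phi(x_k)|$ of every order. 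Integrating $|DF(z)| \le A\exp(B|z|)$ along the segment from $0$ to $z$ yields $|F(z)| \le |F(0)| + A|z|\exp(B|z|)$, and combined with the exponential moment bound this shows that $\{F(\phi(x_1),\ldots,\phi(x_k))\}_Q$ is uniformly integrable under the family $\{P_Q\}_Q$, and in particular the expectations in question are uniformly bounded.

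Next I would establish convergence via a Cauchy estimate. For cubes $Q\subset Q'$ with the $x_i$ sitting deep inside $Q$, interpolate by a one-parameter family of Hamiltonians on $\Om_{Q'}$ running from the $P_{Q'}$ Hamiltonian at $t=1$ to one that decouples the complement of $Q$ at $t=0$, so that the $t=0$ marginal on $Q$ coincides with $P_Q$. Then
\[
\langle F \rangle_{Q'} - \langle F \rangle_Q \ = \ \int_0^{1} \frac{d}{dt}\langle F \rangle_t \, dt,
\]
and integration by parts against the interpolating Hamiltonian expresses the integrand as a covariance of $F(\phi(x_1),\ldots,\phi(x_k))$ with an observable supported near $\partial Q$. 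The Helffer--Sj\"{o}strand (random walk) representation of covariances under strictly convex gradient measures, combined with the Brascamp--Lieb bound, shows that these covariances decay at least like $\exp\!\big(-cm\,\mathrm{dist}(\{x_i\},\partial Q)/\sqrt{\La}\big)$; the uniform integrability estimate then lets one control the $F$ factor and conclude $|\langle F\rangle_{Q'}-\langle F\rangle_{Q}|\to 0$ as $|Q|\to\infty$.

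The main obstacle is this third step---producing a covariance representation under $P_Q$ and under the interpolating measures with exponential decay uniform in $Q$. The positivity of $m$ is essential, providing the spectral gap of the Witten Laplacian that underlies both the Brascamp--Lieb bound and the decay of correlations; in the $m=0$ case neither the uniform moment bounds nor the Cauchy estimate would go through without additional structure (which is precisely what forces the separate treatment of the massless case in Theorem 1.3).
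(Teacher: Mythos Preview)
The paper does not actually prove this proposition; it is quoted directly from Lemma~2.2 of \cite{c1}, with no argument reproduced here. So there is no in-paper proof to compare against, and your outline has to stand on its own.

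Your first step is correct and standard: the Brascamp--Lieb bound (\ref{C5}) with $f=\sum_i s_i\delta_{x_i}$ gives sub-Gaussian tails uniformly in $Q$ (the periodic massive Green's function on $Q$ is dominated by the full-space one, which is bounded since $m>0$), and together with $|F(z)|\le|F(0)|+A|z|e^{B|z|}$ this yields uniform integrability of $F(\phi(x_1),\dots,\phi(x_k))$ and uniform boundedness of the expectations.

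The Cauchy step, however, has a real gap. You propose to interpolate on $\Omega_{Q'}$ between the $P_{Q'}$ Hamiltonian and one that ``decouples the complement of $Q$,'' and assert that the $t=0$ marginal on $Q$ coincides with $P_Q$. It does not: $P_Q$ is defined with \emph{periodic} boundary conditions on the torus $Q_L$, whereas decoupling $Q$ inside the larger torus $Q'$ produces a measure on the sub-cube $Q\subset Q'$ with free (or Dirichlet) boundary conditions. The two endpoints of your interpolation are therefore not the two measures you are trying to compare, and the identity $\langle F\rangle_{Q'}-\langle F\rangle_Q=\int_0^1\frac{d}{dt}\langle F\rangle_t\,dt$ fails as written. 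One can of course also compare periodic and free/Dirichlet boundary conditions on $Q$ and show that difference vanishes too, but that is a second estimate of the same type, and you have not supplied it.

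A cleaner route, and one that avoids the boundary-condition mismatch entirely, is to fix an intermediate box $B\supset\{x_i\}$, write $\langle F\rangle_Q=\int\langle F\rangle_B^{\psi}\,d\mu_Q(\psi)$ where $\mu_Q$ is the law under $P_Q$ of the boundary data $\psi=\phi|_{\partial B}$, and use Helffer--Sj\"ostrand (or the Brascamp--Lieb covariance bound) to show that $\psi\mapsto\langle F\rangle_B^{\psi}$ is Lipschitz with constant $\le C\exp[-c\,m\,\mathrm{dist}(\{x_i\},\partial B)/\sqrt{\Lambda}]$. The uniform moment bounds on $\psi$ then give the Cauchy estimate directly. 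Your identification of the mass $m>0$ as the source of the spectral gap, and your remark that the massless case requires a separate argument, are both on target.
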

From (\ref{C5})  and the Helly-Bray theorem \cite{br,d} one sees that Proposition 5.1 implies the existence of a unique Borel probability measure on $\R^m$ corresponding to the probability distribution of the variables $(\phi(x_1),..,\phi(x_m))\in\R^m$, and this measure satisfies (\ref{E5}). The Kolmogorov construction \cite{br,d} then implies the existence of a Borel measure on fields $\phi:\Z^d\ra\R$ with finite dimensional distribution functions satisfying (\ref{E5}). This is the measure (\ref{L1}), which we have formally written as having a density with respect to Lebesgue measure. Note however that we do not have a proof of this fact.  In particular, we do not know if the distribution measure for the one dimensional variable $\phi(x)\in\R$ is absolutely continuous with respect to Lebesgue measure.
\begin{proposition}
Let  $(\Om,\mathcal{F},P)$ be the probability space corresponding to the massive field theory  with measure (\ref{L1}). Then the operators  $\tau_{{\bf e}_j}, \ 1\le j\le d$, on $\Om$ are strong mixing.
\end{proposition}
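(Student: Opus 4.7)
The plan is to deduce strong mixing of $\tau_{{\bf e}_j}$ from an exponential decay of correlations implied by the Brascamp--Lieb inequality together with the mass $m>0$. First, I would reduce to a dense class of test functions: since smooth cylinder functions are dense in $L^2(\Om)$, it is enough to establish that for any bounded $C^1$ cylinder functions $F,G:\Om\to\R$ depending on $\phi(x)$ for $x$ in finite sets $S_F, S_G\subset\Z^d$,
\begin{equation*}
\big| \ \langle F\cdot G(\tau_{n{\bf e}_j}\cdot)\rangle - \langle F\rangle \langle G\rangle \ \big| \ \le \ C_{F,G}\, e^{-cmn/\sqrt{\la}}
\end{equation*}
for some positive constants $c, C_{F,G}$, from which strong mixing of $\tau_{{\bf e}_j}$ follows upon letting $n\ra\infty$.

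Next, I would work in the finite-volume periodic measure $\langle\cdot\rangle_{\Om_Q}$ of (\ref{A5}), whose log-density $\Phi(\phi)=\sum_{x\in Q}\left[V(\na\phi(x))+\tfrac12 m^2\phi(x)^2\right]$ is uniformly convex on $\R^{|Q|}$ with Hessian satisfying $\na^2_\phi\Phi(\phi) \ge -\la\Delta_Q + m^2 I$ in the sense of quadratic forms, by the lower bound (\ref{A1}) on $V''$. The covariance form of the Brascamp--Lieb inequality, equivalently the Helffer--Sj\"ostrand representation that is valid for any $C^2$ uniformly convex measure on Euclidean space, together with the kernel comparison $(\na^2_\phi\Phi)^{-1}(x,y;\phi)\le K_Q(x,y)$ and Cauchy--Schwarz in $\phi$, then yields
\begin{equation*}
\big|\mathrm{Cov}_{\Om_Q}(F, G\circ \tau_{n{\bf e}_j})\big| \ \le \ \sum_{x\in S_F}\sum_{y\in S_G} \Big\langle|\pa_{\phi(x)} F|^2\Big\rangle_{\Om_Q}^{1/2}\Big\langle|\pa_{\phi(y)} G|^2\Big\rangle_{\Om_Q}^{1/2}\,K_Q(x, y+n{\bf e}_j),
\end{equation*}
where $K_Q$ is the Green's function of $-\la\Delta + m^2$ on the periodic box $Q$.

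Standard Agmon-type estimates give $K_Q(x,z)\le C\,e^{-cm|x-z|/\sqrt{\la}}/(1+|x-z|)^{d-2}$ uniformly in $L$ once $L$ is large compared to $|x-z|$. Since $|x-(y+n{\bf e}_j)|\ge n-\mathrm{diam}(S_F\cup S_G)$, and since the expectation factors are controlled uniformly in $Q$ by the exponential moment bound (\ref{C5}) applied to $F$ and $G$ (together with uniform pointwise bounds on $|\pa_\phi F|,|\pa_\phi G|$ for the chosen $C^1$ class), the right-hand side is dominated by $C_{F,G}\,e^{-cmn/\sqrt{\la}}$ with $C_{F,G}$ independent of $Q$. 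Passing to the infinite-volume limit via Proposition 5.1, upgraded to convergence of the covariance by a routine truncation argument that uses (\ref{C5}) for uniform integrability, transfers the bound to the infinite-volume measure and yields the desired decay of correlations.

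The main obstacle is in rigorously justifying the Helffer--Sj\"ostrand covariance representation on $\Om_Q$ and verifying that the resulting estimate is stable under $|Q|\to\infty$: one has to check uniform integrability of the integrands in $Q$ for the chosen class of test functions so that Proposition 5.1 and dominated convergence can be applied to pass from the finite-volume covariance to its infinite-volume analogue. The exponential moment estimate (\ref{C5}) is tailored precisely to supply this control on moments of $\phi$ and of $F,G$, while $m>0$ is what guarantees the $Q$-uniform exponential decay of $K_Q$; these two ingredients together are exactly what promote ergodicity (already noted in the preamble to (\ref{L1})) to strong mixing of each $\tau_{{\bf e}_j}$.
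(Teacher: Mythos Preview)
Your approach is correct in spirit and takes a genuinely different route from the paper. The paper does \emph{not} prove exponential decay of correlations; instead it uses only the variance form of Brascamp--Lieb together with a Fourier/$\ell^2$ argument. Writing $h(n)=\langle f(\phi(n{\bf e}_1))g(\phi(0))\rangle-\langle f\rangle\langle g\rangle$ for smooth compactly supported $f,g$, the paper passes to finite volume $Q=Q_L$, writes $\hat h_Q(\zeta)=L^{-1}\langle a(f,\zeta,\phi)\,\overline{a(g,\zeta,\phi)}\rangle_{\Om_Q}$ with $a(f,\zeta,\phi)=\sum_{n\in I_L}[f(\phi(n{\bf e}_1))-\langle f\rangle]e^{in\zeta}$, and then BL applied to $a$ gives $\langle|a(f,\zeta,\phi)|^2\rangle_{\Om_Q}\le L\|Df\|_\infty^2/m^2$, so that $|\hat h_Q(\zeta)|\le C$ uniformly in $\zeta$ and $Q$. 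By Schwarz/Parseval and the $Q\to\Z^d$ limit from Proposition~5.1, this yields $h\in\ell^2(\Z)$ and hence $h(n)\to 0$. This is more elementary than your argument---it never invokes the Helffer--Sj\"ostrand covariance representation or any pointwise kernel bound---at the price of giving no rate. Your route, by contrast, delivers exponential mixing, which is strictly stronger, but leans on heavier machinery.

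One technical point in your write-up needs repair. The entrywise kernel comparison $(\na_\phi^2\Phi)^{-1}(x,y;\phi)\le K_Q(x,y)$ with $K_Q=(-\la\Delta+m^2)^{-1}$ does \emph{not} follow from the quadratic-form inequality $\na_\phi^2\Phi\ge -\la\Delta+m^2$; indeed already for constant coefficients the Green's function of $-a\Delta+m^2$ decays like $\exp(-m|x|/\sqrt{a})$, so for $a>\la$ it eventually dominates $K_Q$ at large separations---exactly the regime you need. What is true (and sufficient) is an exponential bound on the Helffer--Sj\"ostrand kernel obtained either from an Agmon/Combes--Thomas weighted estimate or from the random-walk representation of \cite{hs,ns1}, giving decay $\exp(-cm|x-y|/\sqrt{\La})$ rather than $\exp(-cm|x-y|/\sqrt{\la})$. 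With that correction your argument goes through and in fact proves more than the proposition asserts.
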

\begin{proof}
It will be sufficient for us to show \cite{p} that for any $m\ge 1$ and $x_1,..,x_m\in\Z^d$,
\begin{multline} \label{F5}
\lim_{n\ra\infty} \langle \ f(\phi(x_1+n{\bf e}_1),....,\phi(x_m+n{\bf e}_1)) \  g(\phi(x_1),....,\phi(x_m)) \ \rangle \ =  \\
\langle \  f(\phi(x_1),....,\phi(x_m)) \ \rangle \  \langle \  g(\phi(x_1),....,\phi(x_m)) \ \rangle 
\end{multline}
for all $C^\infty$ functions $f,g:\R^m\ra\R$ with compact support.  We shall just consider the case $m=1$ since the general case follows from this in a straightforward manner.  

We define the function $h:\Z\ra\R$ by
\be \label{G5}
h(n)= \langle \ f(\phi(n{\bf e}_1))  \ g(\phi(0)) \ \rangle - \langle  \ f(\phi(0)) \ \rangle  \   \langle  \ g(\phi(0)) \ \rangle , \quad n\in\Z.
\ee
Then Proposition 5.1 implies that for any function $k:\Z\ra\R$ of  finite support,
\be \label{H5}
\lim_{|Q|\ra\infty} \sum_{n\in \Z} k(n)h_Q(n) \ = \ \sum_{n\in\Z} k(n)h(n) \ ,
\ee
where $h_Q(\cdot)$ is given by
\be \label{I5}
h_Q(n) \ = \  \langle \ f(\phi(n{\bf e}_1))  \ g(\phi(0)) \ \rangle_{\Om_Q} - \langle  \ f(\phi(0)) \ \rangle_{\Om_Q}  \   \langle  \ g(\phi(0)) \ \rangle_{\Om_Q} , \quad n\in \Z.
\ee
We assume that $Q=Q_L$ with $L$ large enough  so  that the support of $k(\cdot)$ is contained in the interval $[-L/2+1,L/2-1]$. Hence  both $k(\cdot)$ and  $h_Q(\cdot)$ are periodic functions on $I_L=\Z\cap[-L/2,L/2]$.  We may therefore write the sum on the LHS of (\ref{H5}) in its Fourier representation. Thus the Fourier transform of a periodic function $F:I_L\ra\C$ is the periodic function $\hat{F}:\hat{I}_L\ra\C$ defined by
\be \label{J5}
\hat{F}(\zeta) \ = \ \sum_{x\in I_L} F(x)e^{ix\zeta}, \quad \zeta\in \hat{I}_L,
\ee
where $\hat{I}_L$ is the set of lattice points of $(2\pi/L)\Z$ which lie in the interval $[-\pi,\pi]$.
Then
\be \label{K5}
 \sum_{n\in \Z} k(n)h_Q(n) \ = \  \frac{1}{2\pi}\int_{\hat{I}_L} \hat{k}(\zeta) \ \overline{\hat{h}_Q(\zeta)} \ d\zeta \ ,
\ee
with integration on $\hat{I}_L$ defined by
\be \label{L5}
\int_{\hat{I}_L} \ = \ \frac{2\pi}{L}\sum_{\zeta\in\hat{I}_L} \ .
\ee
We can estimate $\hat{h}_Q(\zeta)$ by using translation invariance of the measure (\ref{A5}) and the BL inequality. Thus translation invariance implies that
\be \label{M5}
\hat{h}_Q(\zeta) \ = \ \frac{1}{L}\langle \ a(f,\zeta, \phi(\cdot)) \ \overline{a(g,\zeta,\phi(\cdot))} \ \rangle_{\Om_Q} \ ,
\ee
where $a(f,\zeta, \phi(\cdot))$ is given by the formula
\be \label{N5}
a(f,\zeta, \phi(\cdot)) \ = \ \sum_{n\in I_L} [f(\phi(n{\bf e}_1))-\langle \ f(\phi(n{\bf e}_1)) \ \rangle_{\Om_Q}] \  e^{in\zeta} \ .
\ee
 The BL inequality implies then that
\be \label{O5}
\langle \ |a(f,\zeta, \phi(\cdot))|^2 \ \rangle_{\Om_Q} \ \le  \  \frac{L\|Df(\cdot)\|_\infty^2}{m^2} \ .
\ee
Hence there is a constant $C$ independent of $Q$ such that $|\hat{h}_Q(\zeta)|\le C, \ \zeta\in\hat{I}_L$. Applying then the Schwarz inequality in (\ref{K5}) and using (\ref{H5}) we conclude that
\be \label{P5}
\big| \ \sum_{n\in\Z} k(n)h(n) \  \big| \ \le \  C\left\{\sum_{n\in\Z} k(n)^2 \right\}^{1/2}
\ .
\ee 
It follows that $h(\cdot)\in L^2(\Z)$, and consequently $\lim_{n\ra\infty} h(n)=0$.
\end{proof}
We shall show how the BL inequality can be used to improve the most elementary of the inequalities contained in $\S 2$. Thus let us consider an equation which differs from (\ref{D2}) only  in that the projection operator $P$ has been omitted, 
\be \label{Q5}
\eta\Phi(\xi,\eta,\om)+\pa_\xi^*{\bf a}(\om)\pa_\xi\Phi(\xi,\eta,\om)=-\pa^*_\xi {\bf a}(\om), \quad \eta>0, \ \xi\in \R^d, \ \om\in\Om.
\ee
 For any $v\in\C^d$ we multiply the row vector (\ref{Q5}) on the right by the column vector $v$ and by the function $\overline{\Phi(\xi,\eta,\om)v}$ on the left.  Taking the expectation we see  that 
\be \label{R5}
\|P\pa_\xi\Phi(\xi,\eta,\cdot)v\| \ \le \  \|\pa_\xi\Phi(\xi,\eta,\cdot)v\|  \ \le \  \frac{\La|v|}{\la}  \ .
\ee
where $\|\cdot\|$ denotes the norm in $\mathcal{H}(\Om)$. Let  $g:\Z^d\ra\C^d\otimes\C^d$ be in $L^p(\Z^d,\C^d\otimes\C^d)$ with norm given by  (\ref{E4}). If $p=1$ then (\ref{R5}) implies that
\be \label{S5}
\|P\sum_{x\in\Z^d} g(x)\pa_\xi\Phi(\xi,\eta,\tau_x\cdot)v\|  \ \le \  \|\sum_{x\in\Z^d} g(x)\pa_\xi\Phi(\xi,\eta,\tau_x\cdot)v\|  \ \le \ \frac{\La|v|}{\la} \|g\|_1 \ .
\ee
The BL inequality enables us to improve (\ref{S5}) to allow $g\in L^p(\Z^d,\C^d\otimes\C^d)$ for some $p>1$.
\begin{proposition}
Suppose ${\bf a}(\cdot)$ in (\ref{Q5}) is as in the statement of Theorem 1.2.  Then there exists $p_0(\La/\la)$ depending only on $d$ and $\La/\la$ and satisfying $1<p_0(\La/\la)< 2$, such that for $g\in L^p(\Z^d,\C^d\otimes\C^d)$ with $1\le p\le p_0(\La/\la)$ and $v\in\C^d$,
\be \label{T5}
\|P\sum_{x\in\Z^d} g(x)\pa_\xi\Phi(\xi,\eta,\tau_x\cdot)v\|   \ \le \ \frac{\La_1C|v|}{m\La} \|g\|_p \ ,
\ee
where $\La_1$ is the constant in Theorem 1.2 and  $C$ depends only on $d$ and $\La/\la$. 
\end{proposition}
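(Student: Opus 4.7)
The strategy is to exploit the Brascamp–Lieb (BL) inequality, which is available for the massive field theory (\ref{L1}) thanks to the uniform convexity of $V$, together with a Meyer-type $L^p$ continuity argument. In its differential (covariance) form, BL asserts that for any sufficiently regular $F:\Om_Q\to\R$,
\[
\mathrm{Var}_{P_Q}(F)\;\le\;\Big\langle \big(D_\phi F,\;(-\la\Del_Q+m^2)^{-1} D_\phi F\big)_{\ell^2(Q)} \Big\rangle_{\Om_Q},
\]
where $D_{\phi(y)}F=\pa F/\pa\phi(y)$. Passing to the thermodynamic limit via Proposition 5.1 gives the same inequality on the infinite-volume space. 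I would apply it to $F=\Psi_g:=\sum_x g(x)\pa_\xi\Phi(\xi,\eta,\tau_x\cdot)v$ (treating real and imaginary parts), which is mean-zero after $P$, so
\[
\|P\Psi_g\|_{\mathcal H(\Om)}^2\;\le\;\Big\langle\big(D_\phi\Psi_g,\;G_m\, D_\phi\Psi_g\big)_{\ell^2(\Z^d)}\Big\rangle,\qquad G_m:=(-\la\Del+m^2)^{-1}.
\]

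Next I would compute the $\phi$-gradient of $\Psi_g$ using the chain rule $D_{\phi(y)}[H(\tau_x\om)]=(D_{\phi(y-x)}H)(\tau_x\om)$, obtaining
\[
D_{\phi(y)}\Psi_g(\om)=\sum_x g(x)\,\Theta_{y-x}(\tau_x\om),\qquad\Theta_z(\om):=(D_{\phi(z)}\pa_\xi\Phi)(\xi,\eta,\om)v,
\]
and differentiate (\ref{Q5}) in $\phi(z)$. Since $\mathbf{a}(\om)=\tilde{\mathbf{a}}(\phi(0))$, the quantity $D_{\phi(z)}\mathbf{a}$ is supported only at the single site $z=0$ and has norm at most $\La_1$; the resulting elliptic equation for $D_{\phi(z)}\Phi$ has the same principal operator $\eta+\pa_\xi^*\mathbf{a}\pa_\xi$ as (\ref{Q5}), with right-hand side $-\pa_\xi^*[(D_{\phi(z)}\mathbf{a})(I+\pa_\xi\Phi)v]$ — an inhomogeneity localised near $z$, of amplitude $O(\La_1(1+|\pa_\xi\Phi|))$. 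Feeding this structure back into the BL double sum and using the $\ell^2$-boundedness (via Parseval) of the composite operator $\na^* G_m\na$ together with the a priori bound (\ref{R5}), one arrives at the endpoint $p=2$ estimate
\[
\|P\Psi_g\|_{\mathcal H(\Om)}^2\;\le\;\frac{C(\La/\la)\,\La_1^2|v|^2}{m^2\La^2}\,\|g\|_2^2.
\]

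To extend from $p=2$ down to the stated range $1\le p\le p_0(\La/\la)<2$, I would invoke Meyer's theorem (as used in the intro), which guarantees that the discrete Calderón–Zygmund operator $\na^* G_m\na$ implicit in the BL bound is bounded on $L^p(\Z^d)$ with operator norm close to its $L^2$-norm when $p$ is close to $2$. Combining the Meyer-type continuity in $p$ with a Riesz–Thorin interpolation against the trivial $L^1$-bound (\ref{S5}) produces the stated constant $\La_1C/(m\La)$ on the full range $[1,p_0]$; the restriction $p_0<2$ is the usual price of Meyer's theorem and also provides the slack needed to absorb the mismatch between the $\La/\la$ prefactor at $p=1$ and the $\La_1/(m\La)$ prefactor at $p=2$ into a pure $C(\La/\la)$ constant.

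The principal obstacle is the analysis of $D_{\phi(z)}\Phi$ and its interaction with the BL double sum. Although $D_{\phi(z)}\mathbf{a}$ is local, the solution $D_{\phi(z)}\Phi$ is \emph{not} — indeed, by translation invariance $\|D_{\phi(z)}\pa_\xi\Phi\, v\|_{\mathcal H}$ is constant in $z$, so a naive Cauchy–Schwarz inside the BL sum diverges. Progress therefore requires exploiting Fourier/Hilbert-space cancellations between the terms $D_{\phi(y)}\Psi_g$ and $D_{\phi(y')}\Psi_g$ through the convolution kernel $G_m$: this is the step where BL and Meyer's theorem genuinely cooperate, and it is the key technical point on which the entire estimate rests.
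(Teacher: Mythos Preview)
Your overall plan (Brascamp--Lieb plus a Meyer-type argument) is the paper's plan, but the way you propose to execute both steps is off, and you yourself flag the resulting gap in your last paragraph. Two concrete points:

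\textbf{(1) Use the crude form of BL.} The paper does \emph{not} keep the full kernel $G_m=(-\la\Del+m^2)^{-1}$ in the BL bound. It throws away $-\la\Del$ and uses simply $G_m\le m^{-2}I$, so that
\[
\|P\Psi_g\|^2\;\le\;\frac{1}{m^2}\sum_{z\in\Z^d}\big\|D_{\phi(z)}\Psi_g\big\|_{\mathcal H(\Om)}^2.
\]
There is no double sum over $y,y'$ and hence no need for the ``Fourier/Hilbert-space cancellations through the convolution kernel $G_m$'' that you are worried about. Those cancellations are not there; the mechanism is different.

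\textbf{(2) Meyer is applied to a spatial equation, not to $\na^*G_m\na$.} The missing idea is a change of variables that turns the $\phi$-gradient into a function on $\Z^d\times\Om$ satisfying a \emph{spatial} divergence-form equation. With $F(\phi)=\Phi(\xi,\eta,\phi)v$, set $G(y,\phi):=dF(-y,\tau_y\phi)$. Then one checks $D_{\phi(z)}[\pa_\xi\Phi(\xi,\eta,\tau_x\phi)v]=\na_\xi G(x-z,\tau_z\phi)$, and differentiating (\ref{Q5}) gives
\[
\eta\,G(y,\phi)+\na_\xi^*\tilde{\bf a}(\phi(y))\na_\xi G(y,\phi)=-\na_\xi^*\big[\del(-y)\,D\tilde{\bf a}(\phi(y))\{v+\pa_\xi\Phi(\xi,\eta,\tau_y\phi)v\}\big],
\]
an elliptic equation in the lattice variable $y$ with random coefficients $\tilde{\bf a}(\phi(y))$ and a right-hand side supported at a single site. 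Meyer's theorem is applied to \emph{this} equation (via the operator $T_{\xi,\eta}$ of (\ref{AJ5}) acting on $L^q(\Z^d\times\Om,\C^d)$), yielding $\|\na_\xi G(\cdot,\phi)\|_q\le C(\La/\la)\,\La_1|v|/\La$ for all $q$ in some interval $[q_0(\La/\la),2]$ with $q_0<2$. The BL right-hand side is then, by translation invariance,
\[
\frac{1}{m^2}\sum_{z}\Big\|\sum_x g(x)\na_\xi G(x-z,\phi)\Big\|^2=\frac{1}{m^2}\,\|g*\na_\xi G\|_{L^2(\Z^d\times\Om)}^2,
\]
and Young's convolution inequality $\|g*\na_\xi G\|_2\le\|g\|_p\|\na_\xi G\|_q$ with $1/p+1/q=3/2$ gives the claim. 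Note that $q=2$ corresponds to $p=1$, and pushing $q$ down to $q_0<2$ is what pushes $p$ \emph{up} to $p_0>1$; there is no $p=2$ endpoint in this argument, and your proposed interpolation from $p=2$ down is in the wrong direction.
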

\begin{proof}
As in Proposition 5.1 we first assume that $g(\cdot)$ has finite support in $\Z^d$. For a cube $Q$ containing the support of $g(\cdot)$  let $\Phi_Q(\xi,\eta,\cdot)$ be the solution to (\ref{Q5}) with ${\bf a}(\phi)= \tilde{\bf a}(\phi(0)), \ \phi\in\Om_Q$, so the random environment for (\ref{Q5}) is $(\Om_Q,\mathcal{F}_Q,P_Q)$. The BL inequality implies that
\be \label{U5}
\|P\sum_{x\in\Z^d} g(x)\pa_\xi\Phi_Q(\xi,\eta,\tau_x\cdot)v\|^2   \ \le \ 
\frac{1}{m^2}\sum_{z\in Q} \| \ \frac{\pa}{\pa \phi(z)}  \ \sum_{x\in\Z^d} g(x)\pa_\xi\Phi_Q(\xi,\eta,\tau_x\cdot)v\|^2 \ . 
\ee

Translation operators $\tau_x, \ x\in\Z^d$, act on functions $F_Q:\Om_Q\ra\C$  by $\tau_x F_Q(\phi(\cdot))=F_Q(\tau_x\phi(\cdot))$. We shall also need to use translation operators $T_x, \ x\in\Z^d$, which act on functions $G_Q:Q\times \Om_Q\ra\C$ by $T_x G_Q(z,\phi(\cdot))=G_Q(z+x,\phi(\cdot))$, so $T_x$ acts on the first variable of $G_Q(\cdot,\phi(\cdot))$. The operators  $\tau_x, \ x\in\Z^d$, act on the second variable of $G_Q(\cdot,\phi(\cdot))$, and it is clear that they commute with the $T_x, \ x\in\Z^d$.
For a $C^1$ function $F_Q:\Om_Q\ra\C$ let  $dF_Q(\cdot,\phi(\cdot))$ denote its gradient so that 
\be \label{V5}
dF_Q(z,\phi(\cdot)) \ = \ \frac{\pa}{\pa \phi(z)} F_Q(\phi(\cdot)) \ , \quad z\in Q.
\ee
One easily sees that 
\be \label{W5}
d[\tau_x F_Q] \ = \ T_{-x}\tau_x dF_Q, \quad x\in \Z^d,
\ee
whence it follows from (\ref{C2})  that
\be \label{X5}
d[\pa_{j,\xi}\tau_x F_Q] \ = \ [e^{-i{\bf e}_j.\xi}T_{-{\bf e}_j}\tau_{{\bf e}_j}-1]T_{-x}\tau_x dF_Q, \qquad 1\le j\le d, \ x\in \Z^d.
\ee
Hence if we define a function $G_Q:Q\times \Om_Q\ra\C$ by
\be \label{Y5}
G_Q(y,\phi(\cdot)) \ = \ dF_Q(-y,\tau_y\phi(\cdot)), \quad y\in Q,
\ee
then (\ref{X5}) implies that
\be \label{Z5}
d[\pa_{j,\xi}\tau_x F_Q](z,\phi(\cdot)) \ = \ \na_{j,\xi}G_Q(x-z,\tau_z\phi(\cdot)),  \qquad 1\le j\le d, \ x,z\in \Z^d,
\ee
where $\na_{\xi}=(\na_{1,\xi},..,\na_{d,\xi})$ and its adjoint  $\na^*_{\xi}=(\na^*_{1,\xi},..,\na^*_{d,\xi})$ are generalizations of the gradient operators (\ref{C1}),
\begin{eqnarray} \label{AA5}
  \na_{j,\xi} \phi(x) &=& e^{-i{\bf e}_j.\xi}\phi (x + {\bf e}_j) - \phi(x),  \qquad 1\le j\le d, \ x\in\Z^d, \\
  \na^*_{j,\xi} \phi(x) &=& e^{i{\bf e}_j.\xi}\phi (x - {\bf e}_j) - \phi(x), \qquad 1\le j\le d, \ x\in\Z^d, \nonumber
\end{eqnarray}
and act on the first variable of $G_Q(\cdot,\phi(\cdot))$.
On taking $F_Q(\phi(\cdot))=\Phi_Q(\xi,\eta,\phi(\cdot))v$ and defining $G_Q$ by (\ref{Y5}), we conclude from (\ref{Z5}) that (\ref{U5}) is the same as
\be \label{AB5}
\|P\sum_{x\in\Z^d} g(x)\pa_\xi\Phi_Q(\xi,\eta,\tau_x\cdot)v\|^2   \ \le \ 
\frac{1}{m^2}\sum_{z\in Q} \|  \ \sum_{x\in\Z^d} g(x)\na_{\xi}G_Q(x-z,\phi(\cdot))\|^2 \ . 
\ee

We can find an equation for $G_Q(y,\phi(\cdot))$ by applying the gradient operator $d$ to (\ref{Q5}). Thus from (\ref{X5}) we obtain the equation
\begin{multline} \label{AC5}
\eta \  dF_Q(\cdot,\phi(\cdot))+D_\xi^*\tilde{{\bf a}}(\phi(0))D_\xi  \ dF_Q(\cdot,\phi(\cdot)) \\
 = \ -D_\xi^*[ \ \del(\cdot) D\tilde{{\bf a}}(\phi(0))\{v+\pa_\xi F_Q(\phi(\cdot))\} ] \ ,
\end{multline}
where the operators  $D_{\xi}=(D_{1,\xi},..,D_{d,\xi})$ and $D^*_{\xi}=(D^*_{1,\xi},..,D^*_{d,\xi})$ are given by the formulae
\be \label{AD5}
D_{j,\xi}=[e^{-i{\bf e}_j.\xi}T_{-{\bf e}_j}\tau_{{\bf e}_j}-1], \quad 
D^*_{j,\xi}=[e^{i{\bf e}_j.\xi}T_{{\bf e}_j}\tau_{-{\bf e}_j}-1] \ , \quad 1\le j\le d,
\ee
and $\del:Q\ra\R$ is the Kronecker delta function, $\del(0)=1, \ \del(z)=0, \ z\ne 0$. Evidently for any $y\in\Z^d$ we can replace $\phi(\cdot)$ in (\ref{AC5}) by $\tau_y\phi(\cdot)$. If we now evaluate (\ref{AC5}) with $\tau_y\phi(\cdot)$ substituted for $\phi(\cdot)$ and with the first variable equal to $-y$ we obtain an equation for the function $G_Q(\cdot,\phi(\cdot))$ of (\ref{Y5}),
\begin{multline} \label{AE5}
\eta \  G_Q(y,\phi(\cdot))+\na_\xi^*\tilde{{\bf a}}(\phi(y))\na_\xi  \ G_Q(y,\phi(\cdot)) \\
 = \ -\na_\xi^*[ \ \del(-y) D\tilde{{\bf a}}(\phi(y))\{v+\pa_\xi F_Q(\tau_y\phi(\cdot))\} ] \ , \quad y\in Q, \ \phi(\cdot)\in\Om_Q.
\end{multline}
From (\ref{R5}) and (\ref{AE5}) we immediately see that
\be \label{AF5}
\left(\sum_{y\in Q} \|\na_\xi  \ G_Q(y,\phi(\cdot))\|^2\right)^{1/2} \ \le \ \frac{\La_1}{\la} \|v+\pa_\xi F_Q(\phi(\cdot))\| \  \le \ \frac{\La_1}{\la}\left[1+\frac{\La}{\la}\right]|v|,
\ee
where $\La_1$ is the constant of Theorem 1.2. Hence (\ref{AB5}) implies the inequality
\be \label{AG5}
\|P\sum_{x\in\Z^d} g(x)\pa_\xi\Phi_Q(\xi,\eta,\tau_x\cdot)v\|   \ \le \ 
\frac{\La_1|v|}{\la m}\left[1+\frac{\La}{\la}\right]\|g\|_1 \ . 
\ee

The inequality (\ref{AG5}) is evidently weaker than (\ref{S5}) since it involves the bound $\La_1$ on the derivative of $\tilde{{\bf a}}(\cdot)$. The point is that the method applies by using Meyer's theorem to give bounds in terms of the $p$ norm of $g(\cdot)$ for some $p>1$. To see this first observe that one can take the limit $Q\ra\Z^d$ in the inequality (\ref{AB5}), whence we have that
\be \label{AH5}
\|P\sum_{x\in\Z^d} g(x)\pa_\xi\Phi(\xi,\eta,\tau_x\cdot)v\|^2   \ \le \ 
\frac{1}{m^2}\sum_{z\in \Z^d} \|  \ \sum_{x\in\Z^d} g(x)\na_{\xi}G(x-z,\phi(\cdot))\|^2 \ , 
\ee
where $G(\cdot,\phi(\cdot))$ is the solution to the equation
\begin{multline} \label{AI5}
\eta \  G(y,\phi(\cdot))+\na_\xi^*\tilde{{\bf a}}(\phi(y))\na_\xi  \ G(y,\phi(\cdot)) \\
 = \ -\na_\xi^*[ \ \del(-y) D\tilde{{\bf a}}(\phi(y))\{v+\pa_\xi F(\tau_y\phi(\cdot))\} ] \ , \quad y\in \Z^d, \ \phi(\cdot)\in\Om,
\end{multline}
with $F(\phi(\cdot))=\Phi(\xi,\eta,\phi(\cdot))v$.
To see that the LHS of (\ref{AB5}) converges as $Q\ra\Z^d$ to the LHS of (\ref{AH5}) we expand $\pa_\xi\Phi_Q(\xi,\eta,\cdot)$ in an $L^2$ convergent Neumann series as was done with the solution of (\ref{I2}).  Since $\eta>0$ each term in the corresponding expansion of the expectation on the LHS of (\ref{AB5}) converges by Proposition 5.1 to the same term in the expansion of the LHS of (\ref{AH5}). The tail of the expansion is  uniformly small as $Q\ra\Z^d$ from $L^2$ estimates. A similar argument shows that the RHS of (\ref{AB5}) converges as $Q\ra\Z^d$ to the RHS of (\ref{AH5}).

The Neumann series for the solution to (\ref{AI5}) is given in terms of an operator $T_{\xi,\eta}$  acting on functions $g:\Z^d\times\Om\ra\C^d$ which is analogous to the operator (\ref{G2}),
\be \label{AJ5}
T_{\xi,\eta} g(z,\phi(\cdot)) \ = \  \sum_{x\in \Z^d} \left\{\nabla\nabla^* G_{\eta/\La}(x)\right\}^*\exp[-ix.\xi]  \ g(x+z,\phi(\cdot)), \quad z\in\Z^d, \ \phi(\cdot)\in\Om.
\ee
Let ${\bf B}:\Z^d\times\Om\ra\C^d\otimes\C^d$ be defined by ${\bf B}(y,\phi(\cdot))=\tilde{{\bf b}}(\phi(y))$ where $\tilde{{\bf a}}(\cdot)=\La[I_d-\tilde{{\bf b}}(\cdot)]$.
Equation (\ref{AI5}) is then equivalent to the equation
\be \label{AK5}
\na_\xi  \ G(\cdot,\phi(\cdot))=T_{\xi,\eta}[{\bf B}(\cdot,\phi(\cdot))\na_\xi  \ G(\cdot,\phi(\cdot))]
-T_{\xi,\eta}h(\cdot,\phi(\cdot))\ ,
\ee
with $h:\Z^d\times\Om\ra\C^d$ given by the formula
\be \label{AL5}
h(y,\phi(\cdot)) \ = \ \frac{1}{\La}\del(-y) D\tilde{{\bf a}}(\phi(y))\{I_d+\pa_\xi \Phi(\xi,\eta,\phi(\cdot))\}v, \quad y\in\Z^d, \ \phi(\cdot)\in\Om.
\ee
Consider now the Hilbert space $\mathcal{H}(\Z^d\times\Om)$ of functions $g:\Z^d\times\Om\ra\C^d$ with norm $\|g\|_2$ given by
\be \label{AM5}
\|g\|_2^2 \ = \ \sum_{y\in\Z^d} \|g(y,\phi(\cdot)\|^2 \ ,
\ee
where $\|g(y,\phi(\cdot))\|$ is the norm of $g(y,\phi(\cdot))\in\mathcal{H}(\Om)$. Evidently the function $h$ of (\ref{AL5}) is in $\mathcal{H}(\Z^d\times\Om)$ and 
\be \label{AN5}
\|h\|_2\le \frac{\La_1}{\La}\left[1+\frac{\La}{\la}\right]|v|.
\ee 
Since $\|T_{\xi,\eta}\|\le 1$ and $\|{\bf B}(\cdot,\phi(\cdot))\|\le 1-\la/\La$, we conclude from (\ref{AN5}) on summing the Neumann series for (\ref{AK5})  that  (\ref{AF5}) holds for $Q\ra\Z^d$.  

We may define for any $q\ge 1$ the Banach space $L^q(\Z^d\times\Om,\C^d)$ of functions $g:\Z^d\times\Om\ra\C^d$ with norm $\|g\|_q$ given by
\be \label{AO5}
\|g\|_q^q \ = \ \sum_{y\in\Z^d} \|g(y,\phi(\cdot))\|^q \ .
\ee
As in Lemma 4.1 the operator $T_{\xi,\eta}$ is bounded on $L^q(\Z^d\times\Om,\C^d)$ for $q>1$  with norm  $\|T_{\xi,\eta}\|_q\le 1+\del(q)$, where $\lim_{q\ra 2}\del(q)=0$. Noting that $\|h\|_q$ is bounded by the RHS of (\ref{AN5}) for all $q\ge 1$, we conclude then from (\ref{AK5}) that there exists $q_0(\La/\la)<2$  depending only on $d$ and $\La/\la$ such that
\be \label{AP5}
\|\na_\xi G(\cdot,\phi(\cdot))\|_q \ \le  \ \frac{C_1\La_1|v|}{\La}, \quad q_0(\La/\la)\le q\le 2,
\ee
where the constant $C_1$ depends only on $d$ and $\La/\la$. The result follows from (\ref{AH5}), (\ref{AP5}) and Young's inequality.
\end{proof}
We proceed now to establish Theorem 3.1 for the massive field theory environment $(\Om,\mathcal{F},P)$ along the same lines followed in $\S4$ for the i.i.d. environment.
\begin{lem}
Let $T_{r,\eta}, \ r=1,2,.., \ \eta>0,$ be the operator (\ref{G4}). Then there exists $p_0(\La/\la)$ with $1<p_0(\La/\la)<2$ depending only on $d$ and $\La/\la$, such that
\be \label{AQ5}
\|T_{r,\eta}\|_{p,\infty} \ \le \ \frac{\La_1r}{m\La}(1-\la/\La)^{r/2} \quad {\rm \ for \ } 1\le p\le p_0(\La/\la) \ .
\ee
\end{lem}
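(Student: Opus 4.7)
The plan is to prove Lemma 5.1 as the massive field theory analog of the i.i.d.\ bound (\ref{I4}). The factor $\La_1/(m\La)$ in (\ref{AQ5}) will come from the Brascamp-Lieb (BL) inequality, exactly as in Proposition 5.3, while the geometric factor $(1-\la/\La)^{r/2}$ will be obtained by applying Meyer's theorem in an $L^q$-space with $q$ sufficiently close to $2$.

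The first step is to apply the BL inequality to bound $\|T_{r,\eta}g(\xi,\cdot)v\|_{L^2(\Om)}^2$. Since the outermost projection $P$ kills constants, $T_{r,\eta}g(\xi,\cdot)v$ has zero mean under each finite-volume approximation of the field measure, so (after passing to the limit exactly as in the proof of Proposition 5.3) BL yields
\[
\|T_{r,\eta}g(\xi,\cdot)v\|_{L^2(\Om)}^2 \ \le \ \frac{1}{m^2}\sum_{z\in\Z^d}\left\|\pa_{\phi(z)}[T_{r,\eta}g(\xi,\cdot)v]\right\|_{L^2(\Om)}^2,
\]
as in the derivation of (\ref{U5}). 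Using the translation-covariance identity $\pa_{\phi(z)}[\tau_x F(\phi(\cdot))] = dF(z-x,\tau_x\phi(\cdot))$ (compare (\ref{W5})--(\ref{Z5})), the right-hand side becomes a weighted convolution of $g$ against $dF_r$, where $F_r(\phi(\cdot)) = P\bb(\phi(0))[PT_{\xi,\eta}\bb(\cdot)]^{r-1}v$.

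Applying the Leibniz rule to $dF_r$ produces $r$ terms, each corresponding to $D\tilde{\bb}$ acting at one of the $r$ occurrences of $\tilde{\bb}$ in $F_r$ (the outermost factor or one of the $r-1$ copies inside the iterated operator $(PT_{\xi,\eta}\bb)^{r-1}$). For each such term I will write a Neumann-series equation analogous to (\ref{AK5}), involving the operator $T_{\xi,\eta}$ acting on $\mathcal{H}(\Z^d\times\Om)$. By Meyer's theorem $T_{\xi,\eta}$ extends to $L^q(\Z^d\times\Om,\C^d)$ with norm $1+\del(q)$, $\del(q)\ra 0$ as $q\ra 2$. Choosing $p_0(\La/\la)$ close enough to $2$ so that $(1-\la/\La)[1+\del(q)] \le (1-\la/\La)^{1/2}$, each factor $PT_{\xi,\eta}\bb$ in the Neumann series contributes at most $(1-\la/\La)^{1/2}$ in the relevant $L^q$ norm. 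Geometrically summing the $r-1$ such factors together with an extra $(1-\la/\La)^{1/2}$ from the outer $\bb$ yields the factor $(1-\la/\La)^{r/2}$.

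The main obstacle will be the careful accounting of the $r$ Leibniz terms: each produces a factor $D\tilde{\bb}(\phi(y))$ at a specific site $y$, and the Neumann-series structure depends on which slot is differentiated. After summing over $z$, one must argue that the totality is controlled in $L^q(\Z^d\times\Om)$ by $\|g\|_p$ times $(1-\la/\La)^{(r-1)/2}$, uniformly in $\xi$. Once this is established, Young's inequality (as at the end of the proof of Proposition 5.3) converts the $L^q$ bound into the stated $L^p\ra L^\infty$ bound; the combinatorial factor $r$ arises from the $r$ derivative terms and the $\La_1/(m\La)$ from BL together with the hypothesis $\|D\tilde{\bb}\|_\infty\le \La_1/\La$.
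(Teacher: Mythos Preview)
Your proposal is correct and follows essentially the same approach as the paper: apply BL to reduce to a bound on $\sum_z\|\pa_{\phi(z)}[T_{r,\eta}g(\xi,\cdot)v]\|^2$, convert the $\phi$-derivative into a $\Z^d\times\Om$ object via the transformation (\ref{Y5})--(\ref{Z5}), use Meyer's theorem on $L^q(\Z^d\times\Om,\C^d)$ to get geometric decay $(1-\la/\La)^{r/2}$, and finish with Young's inequality. The only organizational difference is that the paper does not treat your $r$ Leibniz terms separately; instead it packages them into functions $F_r,G_r$ defined \emph{recursively} by equations of the type (\ref{AT5}) and (\ref{AV5}), so that the linear factor $r$ emerges from iterating the recursion rather than from counting derivative slots---but this is the same computation arranged differently.
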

\begin{proof}
As in Proposition 5.3  it will be sufficient to assume $g:\Z^d\ra\C^d\otimes\C^d$ has finite support and take the $Q\ra\Z^d$ limit in the BL inequality for  finite cubes $Q\subset\Z^d$. The inequality (\ref{AQ5}) follows immediately from BL in the case $r=1$. Thus we have for $v\in\C^d$ that the norm of $T_{1,\eta}g(\xi,\cdot)v\in\mathcal{H}(\Om)$ is bounded as 
\be \label{AR5}
\|T_{1,\eta}g(\xi,\cdot)v\|^2 \ \le \  \frac{1}{m^2}\sum_{x\in\Z^d} \|g(x)D\tilde{\bf b}(\phi(0))v\|^2 
 \le \ \left(\frac{\La_1}{m\La}\|g(\cdot)\|_p|v|\right)^2 
\ee
for any $p$ satisfying $1\le p\le 2$. 

For $r>1$ we write
\be \label{AS5}
T_{r,\eta}g(\xi,\phi(\cdot))v \ = \ P\sum_{x\in\Z^d} g(x) e^{-ix.\xi}\tau_x {\bf b}(\cdot)\pa_\xi F_r(\phi(\cdot)) \ ,
\quad \phi(\cdot)\in\Om,
\ee
where the functions $F_r(\phi(\cdot))$ are defined inductively by
\begin{eqnarray} \label{AT5}
\frac{\eta}{\La}F_r(\phi(\cdot))+\pa_\xi^*\pa_\xi F_r(\phi(\cdot))&=&P\pa^*_\xi [\tilde{{\bf b}}(\phi(0))\pa_\xi F_{r-1}(\phi(\cdot))],  \ r> 2,  \\
\frac{\eta}{\La}F_2(\phi(\cdot))+\pa_\xi^*\pa_\xi F_2(\phi(\cdot))&=&P\pa^*_\xi [\tilde{{\bf b}}(\phi(0))v]  \ .
\nonumber
\end{eqnarray}
Similarly to (\ref{AI5}) we define for $r\ge 2$ functions $G_r:\Z^d\times\Om\ra\C$ by
\be \label{AU5}
G_r(y,\phi(\cdot)) \ = \ dF_r(-y,\tau_y\phi(\cdot)), \quad y\in \Z^d, \ \phi(\cdot)\in\Om.
\ee
Then from (\ref{AT5}) we see that the $G_r(y,\phi(\cdot)) $ satisfy the equations
\begin{multline} \label{AV5}
\frac{\eta}{\La}G_r(y,\phi(\cdot))+\na_\xi^*\na_\xi G_r(y,\phi(\cdot)) \ =\\
P\na^*_\xi [\del(-y)D\tilde{{\bf b}}(\phi(y))\pa_\xi F_{r-1}(\tau_y\phi(\cdot))+\tilde{{\bf b}}(\phi(y))\na_\xi G_{r-1}(y,\phi(\cdot))],  \ r> 2, \\
\frac{\eta}{\La}G_2(y,\phi(\cdot))+\na_\xi^*\na_\xi G_2(y,\phi(\cdot)) \ = \ P\na^*_\xi [\del(-y)D\tilde{{\bf b}}(\phi(y))v]  \ . 
\end{multline}

From (\ref{AS5}) and BL we have that
\begin{multline} \label{AW5}
\|T_{r,\eta}g(\xi,\cdot)v\|^2 \ \le \\
 \frac{1}{m^2}\sum_{z\in\Z^d} \| \ g(z) e^{-iz.\xi}D\tilde{{\bf b}}(\phi(z))\pa_\xi F_r(\tau_z\phi(\cdot))+\sum_{x\in\Z^d} g(x) e^{-ix.\xi}\tilde{ {\bf b}}(\phi(x))\na_\xi G_r(x-z,\tau_z\phi(\cdot))  \ \|^2 \\
 =
 \frac{1}{m^2}\sum_{z\in\Z^d} \| \ g(z) e^{-iz.\xi}D\tilde{{\bf b}}(\phi(0))\pa_\xi F_r(\phi(\cdot))+\sum_{x\in\Z^d} g(x) e^{-ix.\xi}\tilde{ {\bf b}}(\phi(x-z))\na_\xi G_r(x-z,\phi(\cdot))  \ \|^2 \ . 
\end{multline}
Just as in (\ref{AR5}) we see that
\be \label{AX5}
\frac{1}{m^2}\sum_{z\in\Z^d} \| \ g(z) e^{-iz.\xi}D\tilde{{\bf b}}(\phi(0))\pa_\xi F_r(\phi(\cdot)) \ \|^2 
\le  \ \left\{\frac{\La_1}{m\La}\left(1-\frac{\la}{\La}\right)^{r-1}\|g(\cdot)\|_p|v|\right\}^2 
\ee
for any $p$ satisfying $1\le p\le 2$. The second term in the last expression in (\ref{AW5}) can be bounded using an inequality similar to  (\ref{AP5}).  It is clear from (\ref{AV5}) that
\be \label{AY5}
\|\na_\xi G_r(\cdot,\phi(\cdot))\|_2 \ \le  \  \frac{\La_1}{\La}(r-1)\left(1-\frac{\la}{\La}\right)^{r-1}|v| \ .
\ee
Applying the Calderon-Zygmund theorem \cite{stein} to (\ref{AV5}) we see that there exists $q_0(\La/\la)<2$  depending only on $d$ and $\La/\la$ such that
\be \label{AZ5}
\|\na_\xi G_r(\cdot,\phi(\cdot))\|_q \ \le  \  \frac{\La_1}{\La}(r-1)\left(1-\frac{\la}{\La}\right)^{(r-1)/2}|v| \ ,
\ee
provided $q_0(\La/\la)\le q\le 2$. The result follows from (\ref{AW5}), (\ref{AX5}), (\ref{AZ5})  and Young's inequality. 
\end{proof}
\begin{corollary}
The function $q(\xi,\eta)$  of (\ref{E2}) with domain $\xi\in[-\pi,\pi]^d, \ 0<\eta\le\La$,  is uniformly H\"{o}lder continuous. That is there exist positive constants $C,\alpha$  with $0<\alpha\le 1$ depending only on $d$ and $\La/\la$, such that 
\be \label{BA5}
\|q(\xi',\eta')-q(\xi,\eta)\| \ \le  \ \frac{C\La_1}{m} \left[ \ |\xi'-\xi|^\alpha +|(\eta'-\eta)/\La|^{\al/2} \ \right]
\ee
 for  $\xi',\xi\in[-\pi,\pi]^d$ and $0<\eta,\eta' \le \La$.
\end{corollary}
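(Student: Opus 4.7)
The plan is to mimic the proof of Proposition 4.1, replacing the Meyer/Calder\'on--Zygmund input (Lemma 4.1) by Lemma 5.1, which is the BL-based analogue. Starting from the perturbation expansion $q(\xi,\eta)=\langle{\bf a}\rangle-\La\sum_{m\ge 1}h_m(\xi,\eta)$ in (\ref{N2}), I would apply the telescoping identity (\ref{B4}) — together with its obvious $\eta$-variant obtained by inserting $P[T_{\xi,\eta'}-T_{\xi,\eta}]$ in place of $P[T_{\xi',\eta}-T_{\xi,\eta}]$ — to rewrite each $h_m(\xi',\eta)-h_m(\xi,\eta)$ as a sum of $m$ terms, each containing a single factor of the difference operator $P[T_{\xi',\eta}-T_{\xi,\eta}]$ sandwiched between two strings $[PT_{\xi',\eta}{\bf b}]^{m-1-j}$ and $[PT_{\xi,\eta}{\bf b}]^j$. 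The H\"older factor $|\xi'-\xi|^\al$ (respectively $|(\eta'-\eta)/\La|^{\al/2}$ for the mass variable) is then extracted via the deterministic bounds (\ref{C4}) and (\ref{CC4}) on $\|P[T_{\xi',\eta}-T_{\xi,\eta}]\|_{p,2}$; these depend only on the heat kernel $\na\na^*G_\eta$ appearing in (\ref{G2}) and are therefore independent of the environment.

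The key new input, replacing the Meyer factor $[1+\del(p)]^{m-1}$ that made Proposition 4.1's series converge in the i.i.d.\ case, is Lemma 5.1, which supplies the BL-based geometric contraction $(1-\la/\La)^{m/2}$ together with the prefactor $\La_1/m\La$. Applied to the sandwiched product in each telescoped term — with $p$ chosen close to $1$ so that the H\"older smoothing by $P[T_{\xi',\eta}-T_{\xi,\eta}]$ lands in the right space — Lemma 5.1 yields a term-wise bound of order $(\La_1/m\La)(1-\la/\La)^{m/2}|\xi'-\xi|^\al$, so that the telescoped sum for $h_m(\xi',\eta)-h_m(\xi,\eta)$ is bounded by $m\cdot(\La_1/m\La)(1-\la/\La)^{m/2}|\xi'-\xi|^\al$. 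Summing over $m\ge 1$ converges absolutely without any matching of $p$ near $2$ — the BL contraction beats $1$ on the nose — and the overall factor $\La$ from (\ref{N2}) combines with $\La_1/m\La$ to produce exactly the constant $C\La_1/m$ stated in (\ref{BA5}).

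The main obstacle is setting up the multilinear framework so that Lemma 5.1 can be brought to bear on the sandwiched expression. Concretely, one must recognize the iterated product ${\bf b}(\cdot)[PT_{\xi,\eta}{\bf b}(\cdot)]^j v$ evaluated against a constant $v\in\C^d$ as (up to transposes and the projection $P$) the image of a delta-like test function under the operator $T_{j+1,\eta}$ of (\ref{G4}), and to check that the proof of Lemma 5.1 — which rests on (\ref{AW5})--(\ref{AZ5}) — extends to control the composition $T_{r,\eta}\circ P[T_{\xi',\eta}-T_{\xi,\eta}]\circ T_{s,\eta}$ uniformly in the two parameters. Once this is in place, Cauchy--Schwarz in $\mathcal{H}(\Om)$ separates each telescoped term into two factors, one of which inherits $(1-\la/\La)^{(m-j)/2}$ from Lemma 5.1 directly and the other of which inherits $(1-\la/\La)^{j/2}|\xi'-\xi|^\al\La_1/m\La$ from Lemma 5.1 composed with (\ref{C4}). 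The extension of (\ref{BA5}) from real $\xi$ to the complex strip $|\Im\xi|\le C_2\sqrt{\eta/\La}$ is then automatic from the analyticity already established in Lemma 2.1 and Corollary 2.1.
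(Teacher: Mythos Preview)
Your overall plan --- telescope via (\ref{B4}), extract the H\"older factor from the single difference-operator insertion, and control the remaining strings through Lemma 5.1 --- is the paper's approach. However, there is a concrete point where your route does not go through as written.

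The bounds (\ref{C4}) and (\ref{CC4}) that you invoke are operator bounds between the Fock spaces $\mathcal{H}_\mathcal{F}^p(\Z^d)$ and $\mathcal{H}_\mathcal{F}^2(\Z^d)$; those spaces are specific to the Bernoulli environment of \S4 and are not available for the field-theory measure. Consequently the phrase ``Lemma 5.1 composed with (\ref{C4})'' has no meaning here, and your proposed extension of Lemma 5.1 to compositions $T_{r,\eta}\circ P[T_{\xi',\eta}-T_{\xi,\eta}]\circ T_{s,\eta}$ is an unnecessary detour.

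What the paper does instead is much simpler. One observes the identity
\[
P[T_{\xi',\eta}-T_{\xi,\eta}]\,{\bf b}(\cdot)\,[PT_{\xi,\eta}{\bf b}(\cdot)]^j \ = \ T_{j+1,\eta}\,g(\xi,\cdot),
\]
where $g(x)=\{\na\na^*G_{\eta/\La}(x)\}^*[e^{ix.(\xi-\xi')}-1]$ is an explicit function in $L^p(\Z^d,\C^d\otimes\C^d)$ with $\|g\|_p\le C_p|\xi'-\xi|^\al$ for every $p>d/(d-\al)$. In other words, the difference operator is absorbed into the $L^p(\Z^d)$ test function of the \emph{existing} operator $T_{j+1,\eta}$, not treated as a map between $L^p$-type spaces over $\Om$. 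A single application of Lemma 5.1 then bounds the right-hand factor in $\mathcal{H}(\Om)$ by $\frac{\La_1(j+1)}{m\La}(1-\la/\La)^{(j+1)/2}C_p|\xi'-\xi|^\al$. For the left-hand factor ${\bf b}(\cdot)[PT_{\xi',\eta}{\bf b}(\cdot)]^{m-1-j}$ one needs only the trivial bound $(1-\la/\La)^{m-j}$ coming from $\|T_{\xi',\eta}\|\le 1$ --- a second appeal to Lemma 5.1 is not required and would in fact introduce a spurious extra factor of $\La_1/m\La$. Cauchy--Schwarz, the sum over $j$, and then the sum over $m$ via (\ref{N2}) now give (\ref{BA5}) directly; the $\eta$-variation is handled identically with the obvious choice of $g$.
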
 
\begin{proof}
We proceed as in the proof of  Proposition 4.1. Instead of (\ref{C4}) we use the fact that 
\be \label{BB5}
P\left[ T_{\xi',\eta}- T_{\xi,\eta}\right]{\bf b}(\cdot)\left[ PT_{\xi,\eta}{\bf b}(\cdot)\right]^j  \ = \ 
T_{j+1,\eta} g(\xi,\cdot)
\ee
where
\be \label{BC5}
g(x) \ = \  \{\na\na^*G_{\eta/\La}(x)\}^*[e^{ix.(\xi-\xi')}-1], \quad x\in\Z^d \ .
\ee
Evidently for $0\le \al\le 1$ one has that $g\in L^p(\Z^d,\C^d\otimes\C^d)$ for any $p>d/(d-\al)$ and 
$ \|g\|_p\le C_p |\xi'-\xi|^\al$ for a constant $C_p$ depending only on $p$ and $d$. The H\"{o}lder continuity of $q(\xi,\eta)$ in $\xi$ follows then from Lemma 5.1. The H\"{o}lder continuity of $q(\xi,\eta)$ in $\eta$ can be obtained in a similar way.
\end{proof}
To complete the proof of Theorem 3.1 for the massive field theory environment we need to prove a version of Lemma 4.4 and also that one can do analytic continuation in the variable $\xi\in\R^d$. The proof of this follows along the same lines as in $\S4$.
\vspace{.2in}
\section{Massless Field Theory Environment}
In this section we shall prove Theorem 3.1 for the massless field theory environment $(\Om,\mathcal{F},P)$ with measure given by the $m\ra 0$ limit of the massive field theory  measure (\ref{L1}). The measure is constructed by means of the following result proved in  \cite{c1}:
\begin{proposition} Let $F:\R^{kd}\ra\R$ be a $C^1$ function which satisfies the inequality
\be \label{A6}
            |DF(z)|\le A\exp[ \ B|z| \ ], \quad z\in\R^{kd},
 \ee
 for some constants $A,B$, and $\langle\cdot\rangle_m$ denote the massive field theory expectation with measure (\ref{L1}). Then for any $x_1,....x_k\in\Z^d$, the limit
 \be \label{B6}
 \lim_{m\ra 0} \langle F\left(\na\phi(x_1),\na\phi(x_2),.....,\na\phi(x_k)\right)\rangle_m= \langle F\left(\om(x_1),\om(x_2),.....,\om(x_k)\right)\rangle
\ee
exists and is finite.
\end{proposition}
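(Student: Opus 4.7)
The plan is to adapt the strategy of Proposition 5.1 to gradient variables, using the Brascamp-Lieb inequality in a way that yields bounds uniform in $m$ as $m\ra 0$, and then to invoke uniqueness of the gradient Gibbs measure to identify the limit.

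The first step is to establish a uniform-in-$m$ exponential moment bound on gradient variables. For any $g:\Z^d\ra\R^d$ of finite support, the adjoint identity $(g,\na\phi)=(\na^*g,\phi)$ on $\Om_Q$ together with (\ref{C5}) applied to $f=\na^*g$ gives
\begin{equation*}
\Av{\exp[(g,\na\phi)]}_{\Om_Q}\ \le\ \exp\left[\tfrac{1}{2}\bigl(g,\na\{-\la\Del+m^2\}^{-1}\na^*g\bigr)\right].
\end{equation*}
The operator $\na\{-\la\Del+m^2\}^{-1}\na^*$ has Fourier symbol $e(\xi)e(\xi)^*/(\la|e(\xi)|^2+m^2)$, whose operator norm is bounded by $1/\la$ uniformly in $m$, so the right-hand side is at most $\exp[\|g\|_2^2/(2\la)]$. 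Applying Proposition 5.1 to $F(\phi(\cd))=\exp[(g,\na\phi)]$, which is $C^1$ and satisfies (\ref{D5}) since $g$ has finite support, the same bound persists for the infinite-volume massive measure $\av{\cd}_m$ for every $m>0$.

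The second step uses this bound to control the $m\ra 0$ behavior. For fixed $x_1,\ldots,x_k\in\Z^d$, the distributions $\mu_m$ on $\R^{kd}$ of $(\na\phi(x_1),\ldots,\na\phi(x_k))$ then have uniformly bounded exponential moments, so by Prokhorov's theorem the family $\{\mu_m:m>0\}$ is tight and every sequence $m_n\ra 0$ admits a subsequential weak limit $\mu$. To identify $\mu$ independently of the subsequence, I would use the integration-by-parts identity (\ref{B5}) with test function $f=\na^*h$ for $h$ of finite support. Since $\sum_x(\na^*h)(x)=0$ by telescoping, translation invariance gives $\av{(\na^*h,\phi)}_m=0$, eliminating the $m$-dependent term and yielding the identity $\av{(\na\na^*h,V'(\na\phi))}_m=0$ uniformly in $m>0$. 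This identity passes to any subsequential limit $\mu$ and, together with translation invariance, characterizes $\mu$ as the unique translation-invariant zero-tilt gradient Gibbs measure for the uniformly convex potential $V$, as established in \cite{fs}.

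Having established convergence of the Laplace transforms $L_m(g)=\av{\exp[(g,\na\phi)]}_m$ for all $g$ of finite support, the conclusion for general $F$ satisfying (\ref{A6}) follows from weak convergence of $\mu_m$ together with the uniform exponential moment bound of Step~1, which controls tail contributions in a standard approximation argument. The main obstacle is the uniqueness step: while the Brascamp-Lieb bound yields tightness essentially for free, identifying the limit subsequence-independently relies on a uniqueness theorem for infinite-volume gradient Gibbs measures, which in turn exploits the uniform strict convexity of $V$ (as in the Helffer-Sj\"ostrand / Funaki-Spohn framework cited as \cite{fs}).
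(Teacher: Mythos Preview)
The paper does not give its own proof of this proposition; it is quoted as a result proved in \cite{c1}. So there is no paper proof to compare against directly, and I will assess your argument on its own merits.

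Your Steps 1, 2 and 4 are fine. The uniform exponential bound is exactly the paper's (\ref{C6}), and it does give tightness of the finite-dimensional gradient distributions; once you know that every subsequential limit coincides, the extension from bounded continuous $F$ to $F$ satisfying (\ref{A6}) is routine via the uniform tail control.

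The gap is Step 3. The identity you extract from (\ref{B5}), namely $\langle(\nabla\nabla^*h,V'(\nabla\phi))\rangle=0$ for all finitely supported $h$, is a family of \emph{first-moment} constraints on unconditional expectations. It does not characterise a probability measure: many translation-invariant laws on gradient configurations, Gibbsian or not, satisfy such identities. To invoke Funaki--Spohn uniqueness \cite{fs} you must first show that any subsequential limit is a gradient Gibbs measure in the DLR sense, i.e.\ that its finite-volume conditional distributions are given by the gradient specification. That is a statement about conditional laws, not moments, and it requires a separate argument (stability of the DLR equations under the $m\ra 0$ limit, control of boundary effects, etc.). You flag this as ``the main obstacle'' but do not actually bridge it; as written the argument assumes what it needs to prove.

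A route that avoids the Gibbs-uniqueness machinery, and is closer in spirit to how \cite{c1} handles Proposition~5.1, is to show directly that $m\mapsto\langle F(\nabla\phi(x_1),\ldots,\nabla\phi(x_k))\rangle_m$ is Cauchy. Differentiating the finite-volume expectation in $m^2$ produces a covariance term that can be bounded by Brascamp--Lieb (or the Helffer--Sj\"ostrand representation \cite{hs}) uniformly in the volume, with an $m$-dependence that is integrable near $0$. This yields the limit without ever appealing to a classification of infinite-volume gradient measures.
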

As for the massive case, Proposition 6.1 defines a unique Borel probability measure on gradient fields $\om:\Z^d\ra\R^d$ by using the  inequality derived from (\ref{C5}),
\be \label{C6}
 \langle  \exp[(f,\na \phi)]\rangle_m \ \  \le  \ \  \exp\left[ |f|^2/2\la\right] \ 
 \ee
for any function $f:\Z^d\ra\R^d$ of finite support. This can most easily be seen by using a simple identity. For a function $G(\om(\cdot))$ of vector fields $\om:\Z^d\ra\R^d$ we define its gradient $d_\om G(\cdot,\om(\cdot))$ similarly to (\ref{V5}) by 
\be \label{G6}
d_\om G(z,\om(\cdot)) \ = \ \frac{\pa}{\pa \om(z)} G(\om(\cdot)) \ , \quad z\in\Z^d.
\ee
Thus $d_\om G(z,\om(\cdot)), \ z\in\Z^d$, is for fixed $\om(\cdot)$ a vector field from $\Z^d$ to $\R^d$, and hence we may compute its divergence  $\na^*d_\om G(z,\om(\cdot)), \ z\in\Z^d$. Then with $d$ defined as in (\ref{V5}) we have the identity
\be \label{H6}
d G(z,\na\phi(\cdot)) \ = \ \na^*d_\om G(z,\om(\cdot)), \quad z\in\Z^d.
\ee
The inequality (\ref{C6}) follows from (\ref{C5}) and (\ref{H6}) on setting $G(\om(\cdot))=(f,\om(\cdot))$.
\begin{proposition}
Let  $(\Om,\mathcal{F},P)$ be the probability space corresponding to the massless field theory  with measure given by the $m\ra 0$ limit of the massive field theory  measure (\ref{L1}). Then the operators  $\tau_{{\bf e}_j}, \ 1\le j\le d$, on $\Om$ are strong mixing.
\end{proposition}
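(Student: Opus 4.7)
\emph{Proof plan.} The plan is to parallel the proof of Proposition 5.2; the main new ingredient is a version of the Brascamp--Lieb variance inequality for functionals of the gradient field that remains uniform as $m\ra 0$.

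As in the massive case, it is enough \cite{p} to show that for $C^\infty$ compactly supported $f,g:\R^d\ra\R$,
\be \label{plan1}
h(n) \ = \ \langle \ f(\om(n{\bf e}_1))\, g(\om(0)) \ \rangle - \langle \ f(\om(0)) \ \rangle\, \langle \ g(\om(0)) \ \rangle \ \ra \ 0, \qquad n\ra\infty,
\ee
with the extension to multi-point correlations being a routine adaptation. I would approximate $h$ by finite-volume massive analogues $h_{Q,\mu}(\cdot)$, sending $|Q|\ra\infty$ for fixed $\mu>0$ via Proposition 5.1 and then $\mu\ra 0$ via Proposition 6.1.

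Following (\ref{K5})--(\ref{N5}), I would expand $\sum_n k(n)h_{Q,\mu}(n)$ in its Fourier representation on $\hat{I}_L$ and reduce the task to a uniform bound on $\hat{h}_{Q,\mu}(\zeta) = L^{-1}\langle \ a(f,\zeta,\om(\cdot))\,\overline{a(g,\zeta,\om(\cdot))} \ \rangle_{\Om_Q,\mu}$, where $a(f,\zeta,\om(\cdot))=\sum_{n\in I_L}[f(\om(n{\bf e}_1))-\langle f(\om(n{\bf e}_1))\rangle_{\Om_Q,\mu}]e^{in\zeta}$. The crucial step replaces (\ref{O5}) as follows: applying the BL variance inequality to $F(\phi(\cdot)) = a(f,\zeta,\na\phi(\cdot))$ yields
\be \label{plan3}
\langle \ |a(f,\zeta,\om(\cdot))|^2 \ \rangle_{\Om_Q,\mu} \ \le \ \langle \ (dF,\ [-\la\Del+\mu^2]^{-1}dF) \ \rangle_{\Om_Q,\mu}.
\ee
By the identity (\ref{H6}), $dF(z,\phi(\cdot))=\na^* d_\om F(z,\om(\cdot))$, so after adjoining the right side of \eqref{plan3} equals $\langle \ (d_\om F,\ \na[-\la\Del+\mu^2]^{-1}\na^* d_\om F) \ \rangle_{\Om_Q,\mu}$. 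Fourier diagonalization on the torus $Q$ gives the $\mu$-uniform operator bound
\be \label{plan5}
\na\left[-\la\Del+\mu^2\right]^{-1}\na^* \ \le \ \la^{-1} I \quad \text{on } L^2(Q;\C^d), \quad \mu\ge 0,
\ee
since $|e(\zeta)|^2/[\la|e(\zeta)|^2+\mu^2]\le \la^{-1}$. Because $d_\om F(z,\om(\cdot))$ is supported on $\{n{\bf e}_1:n\in I_L\}$ and bounded pointwise by $\|Df\|_\infty$, I obtain
\be \label{plan4}
\langle \ |a(f,\zeta,\om(\cdot))|^2 \ \rangle_{\Om_Q,\mu} \ \le \ L\|Df\|_\infty^2/\la,
\ee
uniformly in $L,\mu,\zeta$. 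The analogous bound for $g$ and Schwarz then give $|\hat{h}_{Q,\mu}(\zeta)|\le \|Df\|_\infty\|Dg\|_\infty/\la$, uniformly in all parameters.

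With this mass-independent Fourier bound in hand, the remainder is a verbatim transcription of (\ref{K5})--(\ref{P5}): Cauchy--Schwarz in $\zeta$ gives $|\sum_n k(n)h_{Q,\mu}(n)|\le C\|k\|_{L^2(\Z)}$ for any finitely supported $k$, and passing $|Q|\ra\infty$ then $\mu\ra 0$ term by term in $n$ (using Propositions 5.1 and 6.1) transfers the bound to $h$. Hence $h\in L^2(\Z)$, so $h(n)\ra 0$. The principal obstacle is the derivation of \eqref{plan3}--\eqref{plan4}: once the identity (\ref{H6}) has converted the $\phi$-derivative into $\na^* d_\om$, the uniform bound \eqref{plan5}---which does \emph{not} degenerate as $\mu\ra 0$ precisely because $\na^*$ annihilates constants---replaces the $1/\mu^2$ appearing in (\ref{O5}) by $1/\la$, and the rest of the proof of Proposition 5.2 carries over unchanged.
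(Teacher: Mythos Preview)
Your proposal is correct and follows essentially the same approach as the paper's proof: the paper also reduces to the uniform variance bound $\langle |a(f,\zeta,\nabla\phi(\cdot))|^2\rangle_{\Om_Q,m}\le L\|Df\|_\infty^2/\la$ (its (\ref{F6})) and justifies it tersely as ``follows from BL and (\ref{H6}),'' which is exactly the computation you spell out in \eqref{plan3}--\eqref{plan4} via the operator bound \eqref{plan5}. Your write-up simply makes explicit the mechanism---that $dF=\nabla^* d_\om F$ converts the BL bound into one involving $\nabla[-\la\Del+\mu^2]^{-1}\nabla^*\le \la^{-1}I$, eliminating the mass from the estimate---which the paper leaves to the reader.
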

\begin{proof}
The proof follows the same lines as the proof of Proposition 5.2. Thus for $C^\infty$ functions of compact support $f,g:\R^d\ra\R$ let $h_{Q,m}:\Z\ra\R$ be defined similarly to (\ref{I5}) by
\be \label{D6}
h_{Q,m}(n) \ = \  \langle \ f(\na\phi(n{\bf e}_1))  \ g(\na\phi(0)) \ \rangle_{\Om_Q,m} - \langle  \ f(\na\phi(0)) \ \rangle_{\Om_Q,m}  \   \langle  \ g(\na\phi(0)) \ \rangle_{\Om_Q,m} , \quad n\in \Z,
\ee
where we have included the index $m$ to emphasize the dependence of the measure (\ref{A5}) on $m$. 
Following (\ref{N5}),  let  $a(f,\zeta, \om(\cdot))$ be given by the formula
\be \label{E6}
a(f,\zeta, \om(\cdot)) \ = \ \sum_{n\in I_L} [f(\om(n{\bf e}_1))-\langle \ f(\om(n{\bf e}_1)) \ \rangle_{\Om_Q,m}] \  e^{in\zeta} \ .
\ee
Then it will be sufficient for us to show that
\be \label{F6}
\langle \ |a(f,\zeta, \na\phi(\cdot))|^2 \ \rangle_{\Om_Q,m} \ \le  \  \frac{L\|Df(\cdot)\|_\infty^2}{\la} \ ,
\ee
since the RHS of (\ref{F6}) is independent of $m$. This follows from BL and (\ref{H6}).
\end{proof}
\begin{proposition}
Suppose ${\bf a}(\cdot)$ in (\ref{Q5}) is as in the statement of Theorem 1.3.  Then there exists $p_0(\La/\la)$ depending only on $d$ and $\La/\la$ and satisfying $1<p_0(\La/\la)< 2$, such that for $g\in L^p(\Z^d,\C^d\otimes\C^d)$ with $1\le p\le p_0(\La/\la)$ and $v\in\C^d$,
\be \label{I6}
\|P\sum_{x\in\Z^d} g(x)\pa_\xi\Phi(\xi,\eta,\tau_x\cdot)v\|   \ \le \ \frac{\La_1C|v|}{\La\sqrt{\la}} \|g\|_p \ ,
\ee
where $\La_1$ is the constant in Theorem 1.3 and  $C$ depends only on $d$ and $\La/\la$. 
\end{proposition}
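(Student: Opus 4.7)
The plan is to follow the template of Proposition 5.3, replacing the $m$-dependent Brascamp--Lieb--Poincar\'e inequality (whose $1/m^2$ factor is fatal as $m\ra 0$) by an $m$-uniform Poincar\'e inequality with respect to the $\om$-gradient $d_\om$, in which the role of $m^2$ is played by $\la$. Once that reduction is in hand, the rest of the argument is a direct transcription of the massive case, with the passage $m\ra 0$ handled termwise via Proposition 6.1.

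The first step is to establish that, for any $F(\om)$ depending on finitely many values of the gradient field $\om=\na\phi$ and satisfying $\langle F\rangle=0$,
\begin{equation*}
\langle|F|^2\rangle \ \le \ \frac{1}{\la}\sum_{z\in\Z^d} \langle \ |d_\om F(z,\om)|^2 \ \rangle.
\end{equation*}
The standard Brascamp--Lieb--Poincar\'e inequality for the log-concave measure (\ref{A5}) gives $\langle|F|^2\rangle_m\le \langle (dF,(-\la\De+m^2)^{-1}dF)\rangle_m$. Since $F$ depends only on $\na\phi$, identity (\ref{H6}) yields $dF = \na^* d_\om F$. Letting $u$ solve $(-\la\De+m^2)u = \na^* d_\om F$, multiplication by $u$ together with summation by parts gives $\la|\na u|^2 + m^2|u|^2 \le |d_\om F||\na u|$, so that $(dF, u)=(d_\om F,\na u)\le |d_\om F|^2/\la$. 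The bound is uniform in $m$, and Proposition 6.1 permits passage to $m\ra 0$.

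With the massless Poincar\'e inequality in hand, the remainder of the argument mirrors Proposition 5.3. For $g$ of finite support set $F(\om)=\Phi(\xi,\eta,\om)v$ and $G(y,\om) = d_\om F(-y,\tau_y\om)$. The translation identities (\ref{W5})--(\ref{Z5}) remain valid with $d_\om$ in place of $d$, so the Poincar\'e inequality delivers the $\om$-analog of (\ref{AB5}). Differentiating (\ref{Q5}) in $\om$ and translating then produces an elliptic equation for $G(y,\om)$ of the same form as (\ref{AE5}) with coefficient $\tilde{{\bf a}}(\om(y))$. Using (\ref{R5}) and $\|D\tilde{{\bf a}}\|_\infty\le \La_1$, energy estimates bound $\|\na_\xi G\|_2$; Meyer's theorem applied to the Calder\'on--Zygmund operator $T_{\xi,\eta}$ of (\ref{AJ5}) upgrades this to $\|\na_\xi G\|_q \le C\La_1|v|/\La$ for some $q_0(\La/\la)<q\le 2$. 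Young's convolution inequality on $\Z^d$ with $1/p+1/q=3/2$ then yields (\ref{I6}) for $1\le p\le p_0(\La/\la)$. The restriction to finitely supported $g$ is removed by Neumann expansion of $\pa_\xi\Phi$ and termwise convergence via Proposition 6.1, just as at the end of the proof of Proposition 5.3.

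The main obstacle is the massless Poincar\'e inequality itself: the $m$-uniform bound $(dF,(-\la\De+m^2)^{-1}dF)\le |d_\om F|^2/\la$ on purely gradient observables is what forces the factor $1/\sqrt{\la}$ in the constant $\La_1/\La\sqrt{\la}$, and its proof rests essentially on identity (\ref{H6}). Once this is in place, the remainder is a line-by-line adaptation of the massive case.
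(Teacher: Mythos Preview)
Your proposal is correct and follows essentially the same approach as the paper: the key step is precisely the $m$-uniform Poincar\'e inequality for gradient observables obtained by combining Brascamp--Lieb with identity (\ref{H6}), which replaces the $1/m^2$ factor by $1/\la$, and the rest is indeed a direct transcription of Proposition 5.3. The paper's own proof is terser---it simply states (\ref{J6}) as following ``from (\ref{H6}) and BL'' and then declares the remainder to be ``exactly as in Proposition 5.3''---so your write-up in fact supplies the details the paper omits.
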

\begin{proof}
We proceed as in the proof of Proposition 5.3. Thus from (\ref{H6}) and BL we see that
\be \label{J6}
\|P\sum_{x\in\Z^d} g(x)\pa_\xi\Phi_Q(\xi,\eta,\tau_x\cdot)v\|^2   \ \le \ 
\frac{1}{\la}\sum_{z\in \Z^d} \| \ \frac{\pa}{\pa \om(z)}  \ \sum_{x\in\Z^d} g(x)\pa_\xi\Phi_Q(\xi,\eta,\tau_x\om(\cdot))v\|^2 \ . 
\ee 
The remainder of the proof is exactly as in Proposition 5.3.
\end{proof}
\begin{proof}[Proof of Theorem 3.1]
This follows the same lines as the proof of Theorem 3.1 in $\S5$. 
\end{proof}
\section{Second Difference Estimates}
In this section we show how the inequality (\ref{M1}) follows from Theorem 3.1 and the Delmotte-Deuschel argument \cite{dd}
Our starting point is the representation (\ref{A2}) for the averaged Green's function $G_{{\bf a},\eta}(x)$. 
We introduce a low momentum cutoff into the integral (\ref{A2}), then transform the remainder into configuration space and use the H\"{o}lder continuity results of \cite{dd} for the second derivatives of  $G_{{\bf a},\eta}(x)$.  Thus let $\chi:\R^d\ra\R$ be a $C^\infty$ function with compact support such that the integral of $\chi(\cdot)$ over $\R^d$ equals $1$. We write
\be \label{A7}
G_{{\bf a},\eta}(x) \ = \ [G_{{\bf a},\eta}(x) - \chi_L*G_{{\bf a},\eta}(x)] +  \chi_L*G_{{\bf a},\eta}(x) \ ,
\ee
where $\chi_L(x)=L^{-d}\chi(x/L), \ x\in\R^d$, and $*$ denotes convolution on $\Z^d$. 
Let $\hat{\chi}_L(\xi), \ \xi\in[-\pi,\pi]^d,$ be the Fourier transform of
$\chi_L$ restricted to the lattice $\Z^d$. Then for $L\ge 1$ and integers $m,n\ge 0$ there are constants $C,C_{m,n}$ such that
\be \label{B7}
|\hat{\chi}_L(0)-1|   \le \  C/L \ , \quad |(\na_\xi)^m \hat{\chi}_L(\xi)|\le C_{m,n}L^m/[1+L|\xi|]^n \quad \xi\in[-\pi,\pi]^d \ .
\ee
We assume that $R<|x|<2R$ and choose $L=R^{1-\del}$ for some $\del>0$. Then from the first inequality of (\ref{B7}) and the H\"{o}lder continuity result of \cite{dd}, one has the inequality
\be \label{C7}
|\na\na G_{{\bf a},\eta}(x) - \na\na\chi_L*G_{{\bf a},\eta}(x)| \ \le \ \frac{C}{\La(|x|+1)^{d+\alpha}} e^{-\gamma\sqrt{\eta/\La} |x|},
\ee
for some positive constants $\ga$ depending only on $\La/\la,d$ and  $C,\al$ depending only on $\La/\la,d,\del$.  

Next we consider the integral
\be \label{D7}
\frac{1}{(2\pi)^d}\int_{[-\pi,\pi]^d} 
\frac{e^{-i\xi.x}e_k(\xi)e_j(\xi)}{\eta+e(\xi)^*q(\xi,\eta)e(\xi)} \hat{\chi}_L(\xi)\ d\xi \ = \ \frac{e^{a.x}}{(2\pi)^d} \int_{[-\pi,\pi]^d} e^{-i\xi.x} f_a(\xi,\eta) \ d\xi, 
\ee
where for $a\in\R^d$ the function $f_a(\xi,\eta)$ is given by the formula
\be \label{E7}
f_a(\xi,\eta) \ = \ \frac{e_k(\xi+ia)e_j(\xi+ia)}{\eta+e(\xi-ia)^*q(\xi+ia,\eta)e(\xi+ia)} \hat{\chi}_L(\xi+ia) \ .
\ee
Observe now from  the second inequality of (\ref{B7}) that  for any integer $n\ge 0$ there are positive constants   $C,C_n$, where $C$ depends only on $d$ and $C_n$ on $d$ and $n$, such that
\begin{eqnarray} \label{F7}
|\hat{\chi}_L(\xi+ia)| \ &\le& \ \frac{C_n}{[1+|L\xi|]^n}  \ , \qquad \xi\in[-\pi,\pi]^d, \ |a|\le 1/L \ , \\
|\hat{\chi}_L(\xi+ia)| \ &\le& \ Ce^{C|a|L} \ , \qquad \xi\in[-\pi,\pi]^d, \ |a|\ge 1/L \ .  \label{G7}
\end{eqnarray}
We choose $|a|=C(\La/\la)\sqrt{\eta/\La}$ as in the proof of Theorem 3.2, where $C(\La/\la)$ depends only on $\La/\la$. Then if $|a|\ge 1/L$, one has from (\ref{G7}) that
\be \label{H7}
\frac{e^{a.x}}{(2\pi)^d} \int_{[-\pi,\pi]^d} | f_a(\xi,\eta)| \ d\xi \ \le \ 
\frac{C}{\La(|x|+1)^{d+1}} e^{-\gamma\sqrt{\eta/\La} |x|},
\ee
for some positive constants $\ga$ depending only on $\La/\la,d$ and  $C$ depending only on $\La/\la,d,\del$. If  $|a|\le 1/L$, we see from (\ref{F7}) that
\be \label{I7}
\frac{e^{a.x}}{(2\pi)^d} \int_{[-\pi,\pi]^d\cap\{|\xi|>1/R^{1-2\del}\}} | f_a(\xi,\eta)| \ d\xi \ \le \ 
\frac{C}{\La(|x|+1)^{d+1}} e^{-\gamma\sqrt{\eta/\La} |x|},
\ee
for some positive constants $\ga$ depending only on $\La/\la,d$ and  $C$ depending only on $\La/\la,d,\del$. 

For $a\in\R^d$ we define $g_a(\xi,\eta)$ similarly to $f_a(\xi,\eta)$ by 
\be \label{J7}
g_a(\xi,\eta) \ = \ \frac{e_k(\xi+ia)e_j(\xi+ia)}{\eta+e(\xi-ia)^*q(0,0)e(\xi+ia)} \hat{\chi}_L(\xi+ia) \ .
\ee
Then Theorem 3.1 implies that for $|a|<1/L$ and $\al$ the H\"{o}lder constant in (\ref{D3}) , 
\be \label{K7}
\frac{e^{a.x}}{(2\pi)^d} \int_{[-\pi,\pi]^d\cap\{|\xi|\le 1/R^{1-2\del}\}} | f_a(\xi,\eta)-g_a(\xi,\eta)| \ d\xi \ \le \ 
\frac{C}{\La(|x|+1)^{(d+\al)(1-2\del)}} e^{-\gamma\sqrt{\eta/\La} |x|},
\ee
for some positive constants $\ga$ depending only on $\La/\la,d$ and  $C$ depending only on $\La/\la,d,\del$.  On choosing $\del>0$ in (\ref{K7}) to satisfy $(d+\al)(1-2\del)>d$, we conclude from (\ref{H7}), (\ref{I7}), (\ref{K7}) that
\be \label{L7}
| \na\na\chi_L*G_{{\bf a},\eta}(x) - \na\na\chi_L* G_{{\bf a}_{\rm hom},\eta}(x) | \ \le  \ 
\frac{C}{\La(|x|+1)^{(d+\al)}} e^{-\gamma\sqrt{\eta/\La} |x|},
\ee
for some positive constants $\ga$ depending only on $\La/\la,d$ and  $C,\al$ depending only on $\La/\la,d,\del$. The inequality (\ref{M1}) follows from (\ref{C7}), (\ref{L7}) upon using the fact that
\be \label{M7}
|\na\na G_{{\bf a}_{\rm hom},\eta}(x) - \na\na\chi_L*G_{{\bf a}_{\rm hom},\eta}(x)| \ \le \ \frac{C}{\La(|x|+1)^{d+\del}} e^{-\gamma\sqrt{\eta/\La} |x|},
\ee
for some positive constants $\ga,C$ depending only on $\La/\la,d$.

 \thanks{ {\bf Acknowledgement:} This research was partially supported by NSF
under grant DMS-0553487.


\begin{thebibliography}{9}
\bibitem{bl}
H.~Brascamp and E.~Lieb,
\newblock {\em On extensions of the Brunn-Minkowski and Pr\'ekopa-Leindler
theorems, including inequalities for log concave functions, and with an
application to the diffusion equation},
\newblock J. Functional Analysis {\bf 22} (1976), 366-389, MR 56\#8774.
\bibitem{br}
L.~Breiman,
 \newblock {\em Probability},
\newblock  SIAM Classics in Applied Mathematics, 1992.
\bibitem{c1}
J.~Conlon
\newblock{ \em PDE with Random Coefficients and Euclidean Field Theories},
\newblock J. Statistical Physics {\bf 114} (2004), 933-958.
\bibitem{cn1}
J.~Conlon and A.~Naddaf,
\newblock {\em On homogenization of elliptic equations with random coefficients}, 
\newblock  Electronic Journal of Probability {\bf 5} (2000), paper 9, 1-58.
\bibitem{cn2}
J.~Conlon and A.~Naddaf,
\newblock {\em Greens Functions for Elliptic and Parabolic Equations with
Random Coefficients}, 
\newblock New York J. Math. {\bf 6} (2000), 153-225,
MR 2001j:35282.
\bibitem{dd}
T.~Delmotte and J.~Deuschel,
\newblock {\em On estimating the derivatives of symmetric diffusions in
stationary random environment,  with applications to $\na\phi$ interface model},
\newblock Probab. Theory Relat. Fields {\bf 133}, 358-390 (2005). 
\bibitem{d}
R.~Durrett,
\newblock {\em Probability: theory and examples},
\newblock  second edition, Duxbury Press, Belmont CA. 1996. MR 98m:60001.
\bibitem{fs} 
T.~Funaki and H.~Spohn,
\newblock{\em Motion by mean curvature from the Ginzburg-Landau 
$\nabla \phi$ interface model},
\newblock Comm. Math. Phys. {\bf 185} (1997), 1-36, MR 98f:60206. 
\bibitem{go1}
A.~ Gloria and F.~Otto,
\newblock {\em An optimal variance estimate in stochastic homogenization of discrete elliptic equations},
\newblock  2009 preprint.
\bibitem{go2}
A.~ Gloria and F.~Otto,
\newblock {\em An optimal error estimate in stochastic homogenization of discrete elliptic equations},
\newblock  2010 preprint.
\bibitem{gt}
D.~Gilbarg and N.~Trudinger,
\newblock{\em Elliptic Partial Differential Equations of Second Order, second edition,}
\newblock Springer Verlag, 1983.
\bibitem{hs}
B.~Helffer and J.~Sj\"ostrand,
\newblock {\em On the correlation for Kac-like models in the convex case},
\newblock J. Statist. Phys. {\bf 74} (1994), 349-409, MR 95q:82022.
\bibitem{k} 
S.~Kozlov,
\newblock {\em Averaging of random structures},
\newblock Dokl. Akad. Nauk. SSSR {\bf 241} (1978), 1016-1019, MR 80e:60078.
\bibitem{m}
N.~Meyers
\newblock {\em An $L^p$ estimate for the gradient of solutions of second order elliptic divergence equations},
\newblock Ann. Scuola Norm. Pisa Cl. Sci. {\bf 17} (1963),  189-206.
\bibitem{ns1}
A.~Naddaf and T.~Spencer,
\newblock {\em On homogenization and scaling limit of some gradient
perturbations of a massless free field},
\newblock Comm. Math. Phys. {\bf 183} (1997), 55-84, MR 98m:81089.
\bibitem{ns2}
A.~Naddaf and T.~Spencer,
\newblock {\em Estimates on the variance of some homogenization problems},
\newblock 1998 preprint. 
\bibitem{os}
H.~Osada and H.~Spohn,
\newblock {\em Gibbs measures relative to Brownian motion},
\newblock Ann. Probab. {\bf 27} (1999), 1183-1207, MR 2001f:82024.
\bibitem{pv} 
G.~Papanicolaou  and S.~Varadhan,  
\newblock {\em Boundary value problems with rapidly oscillating random 
coefficients}, 
\newblock Volume 2 of \textit{
Coll. Math. Soc. Janos Bolya}, \textbf{27},  Random fields, Amsterdam,
North Holland Publ. Co. 1981, pp. 835-873, MR 84k:58233.
\bibitem{p} 
W.~Parry,
\newblock{\em Topics in ergodic theory},
\newblock Reprint of the 1981 original, Cambridge University Press, Cambridge 2004. MR 2140546.
\bibitem{stein}
E.~Stein,
\newblock{\em Singular Integrals and Differentiability Properties of Functions},
\newblock Princeton University Press, Princeton, N.J. 1970.
\bibitem{sw}
E.~Stein and G.~Weiss,
\newblock{\em Introduction to Fourier Analysis on Euclidean Spaces},
\newblock Princeton University Press, Princeton, N.J. 1975.
\bibitem{y}
V.~Yurinskii,
\newblock{\em Averaging of symmetric diffusion in random medium},
\newblock Sibirskii Matematicheskii Zhurnal {\bf 27} (1986), 167-180. 
\bibitem{zko}
V.~Zhikov, S.~Kozlov and O.~Oleinik,
\newblock{\em Homogenization of Differential Operators and Integral
Functionals},
\newblock Springer Verlag, Berlin, 1994, MR 96h:35003b.
\end{thebibliography}
\end{document}